\documentclass[11pt]{article}

\usepackage[margin=1in]{geometry}
\usepackage{amsmath,amssymb,amsthm,mathtools}
\usepackage{comment}
\usepackage{microtype}
\usepackage{hyperref}
\usepackage{xcolor}
\usepackage{authblk}
\usepackage{mathrsfs}
\usepackage[backend=biber, style=numeric, maxnames=99]{biblatex} 
\hypersetup{
  colorlinks=true,
  linkcolor=red,
  urlcolor=red,
  citecolor=red
}

\newcommand{\KL}{\mathrm{KL}}
\newcommand{\KLinf}{\mathrm{KL}_{\mathrm{inf}}}
\newcommand{\infKL}{\textnormal{inf-KL}}
\newcommand{\EPow}{\operatorname{EPow}_n}
\newcommand{\E}{\mathbb{E}}
\newcommand{\Pcal}{\mathcal{P}}
\newcommand{\Qcal}{\mathcal{Q}}

\newcommand{\X}{\mathsf{X}}
\newcommand{\B}{\mathcal{B}}
\newcommand{\one}{\mathbf{1}}
\newcommand{\bPn}{(\mathcal{P}^n)^{\circ\circ}}
\newcommand{\bPk}{(\mathcal{P}^k)^{\circ\circ}}
\newcommand{\Bb}{\mathcal{B}_b}
\newcommand{\bbF}{\mathbb F}
\newcommand{\bbG}{\mathbb G}

\newcommand{\bt}{\mathbf t}
\newcommand{\Tdet}{\mathsf T}
\newcommand{\Wcal}{\mathcal W}

\DeclareMathOperator{\cl}{cl}

\newtheorem{theorem}{Theorem}[section]
\newtheorem{proposition}[theorem]{Proposition}
\newtheorem{lemma}[theorem]{Lemma}
\newtheorem{corollary}[theorem]{Corollary}
\theoremstyle{definition}
\newtheorem{definition}[theorem]{Definition}
\newtheorem{remark}[theorem]{Remark}
\newtheorem{example}[theorem]{Example}
\theoremstyle{definition}

\author[]{Ashwin Ram\thanks{aram2@andrew.cmu.edu} }
\author[]{Aaditya Ramdas\thanks{aramdas@cmu.edu}}
\affil[]{Carnegie Mellon University}

\title{The optimal betting wealth growth rate}
\date{\today}

\begin{document}

\maketitle

\begin{abstract}
This paper characterizes the best possible rate of growth of wealth in a Kelly betting game when repeatedly betting against a general i.i.d.\ null hypothesis $\Pcal$, but the data are drawn i.i.d.\ from an arbitrary alternative $Q$. We prove that it equals $\lim_{n \to \infty}n^{-1}\inf_{P \in (\Pcal^n)^{\circ\circ}} \KL(Q^n,P)$, where $\Pcal^n = \{P^n: P \in \Pcal\}$ and $(\Pcal^n)^{\circ\circ}$ is its bipolar, i.e.\ this rate is achievable and one cannot do better.  This quantity is in general smaller than a more popular quantity in the literature, $\KLinf(Q,\Pcal) := \inf_{P \in \Pcal}\KL(Q,P)$. If $\KLinf(\cdot,\Pcal)$ is weakly lowersemicontinuous (w.l.s.c.) at $Q$, we show that the two quantities are equal; in particular, this happens when $\Pcal$ is weakly compact. For simple alternatives, we provide the first matching necessary and sufficient condition for when power-one sequential tests exist (without assumptions on $\Pcal, Q$).
We also derive the optimal worst-case growth rate against composite $\Qcal$. 
We emphasize that test supermartingales on reduced filtrations suffice for all iid testing problems, and more general e-processes are not required. 
We thus completely generalize the recent results of Larsson et al.~\cite{larsson2025numeraire} to the sequential setting.
\end{abstract}


\section{Introduction}
Consider a forecaster who asserts that all observations in a data stream are drawn i.i.d.\ from some unknown distribution $P\in\Pcal$. In response, a skeptic, who believes the data are drawn i.i.d.\ from some $Q$, tries to falsify this claim by sequentially betting on unseen outcomes at odds that are fair to the forecaster. The skeptic's accumulated wealth represents the evidence gained against the null: the larger the wealth, the more the evidence. These wealth processes are nonnegative supermartingales, or, more generally, $e$-processes (defined later). These concepts have been very well-studied in the recent literature \parencite{10.1111/rssa.12647,ramdas2023,grunwald2024,ramdas2022testing}, and can be intuited as follows. If the null hypothesis is true (i.e.\ the forecaster is correct), the game has been set up such that the skeptic cannot expect to increase their wealth by betting against it. However, under the alternative $Q$, the skeptic's fortune can be made to consistently grow, exponentially fast.
Our main contribution is characterizing the optimal asymptotic growth rate of evidence under the alternative. 
This problem has a rich history, dating back to Kelly~\cite{kellyinterpretation}, Breiman~\cite{breiman1961optimal} and Cover~\cite{cover1987log}, who studied the problem in different settings. However, ours is the first work to completely characterize the asymptotically optimal exponent at which the wealth can grow, under no assumptions on $\Pcal$ and $Q$. (We also characterize the optimal worst-case growth rate when $Q$ is only known to lie in some arbitrary set $\Qcal$.) 

Now, against a point-alternative $Q$, the most common quantifier of the separation from the null $\Pcal$ is
\[
\KLinf(Q,\Pcal):=\inf_{P\in\Pcal}\KL(Q\|P).
\]
This $\KLinf$ has been presented as the correct benchmark in the literature, as pointed out in several works \cite{agrawal2025stoppingtimespoweronesequential, ram2026asymptoticallyoptimalsequentialchange, sethi2026asymptoticallyoptimalsequentialtesting, shekhar2025optimalanytimevalidtestscomposite}. Unfortunately, this will be only be the correct maximal growth rate under appropriate conditions on $\Pcal$ (like convexity and compactness). To this end, a growth rate of $\KLinf$ will not be achieved by many valid sequential procedures and this quantity does not govern the growth of sequential evidence\footnote{See Proposition~\ref{prop:counterexample} for an example where any $e$-process' growth is $\leq 0$, while  $\KLinf$ is positive (but $\infKL=0$).}. Instead of the $\KLinf$, the right quantity to analyze is the reverse information projection \parencite{larsson2025numeraire} onto the bipolar enlargement of $\Pcal$, a quantity that we call the $\infKL$. To define it, first let
\[
a_n(Q,\Pcal):=\inf_{R\in(\Pcal^n)^{\circ\circ}}\KL(Q^n\|R) =: \infKL(Q^n,\Pcal^n),
\]
where $\Pcal^n := \{P^n : P \in \Pcal \}$, and $(\Pcal^n)^{\circ\circ}$ is its \emph{bipolar} (defined later).
In Theorem~\ref{thm:max-eprocess-rate}, we show that the maximal asymptotic expected-log growth rate against a point alternative $Q$ equals the limit of  $a_n(Q,\Pcal)/n$. That is,
$$
 \sup_{\text{wealth processes } W} \limsup_{n \to \infty} \frac1n \E_Q[\log W_n]   = \lim_{n\to \infty} a_n(Q,\Pcal)/n.
$$ 
In other words, to understand the limit of accumulation of sequential evidence that any procedure can achieve, one would need to analyze the bipolar $(\Pcal^n)^{\circ\circ}$ rather than $(\Pcal^n)$.

To better intuit the importance of the bipolar, note that the polar $(\Pcal^n)^{\circ}$ is the class of all nonnegative $n$-sample tests ($e$-variables) that have expectation at most one under every $P\in\Pcal$. And, $(\Pcal^n)^{\circ\circ}$ is the corresponding dual class, which is termed by \cite{larsson2025numeraire} as the \emph{effective null hypothesis}. Hence intuitively, $a_n(Q,\Pcal)$ is the reverse $\KL$ distance from the alternative law $Q^n$ to the class that is indistinguishable from $\Pcal^n$ by $e$-variables. 

We show in Lemma~\ref{lem:no-power-one-if-in-bipolar} that if $Q^n\in(\Pcal^n)^{\circ\circ}$ for every $n$, then it is impossible to have any level-$\alpha$ power one sequential test against $Q$. In Proposition~\ref{prop:powerone-implies-out} we present a complementary result showing how power-one testing entails eventual non-membership in the bipolar. As such, in this paper we unify separation from the bipolar with attainable wealth growth and existence of sequential tests. Thus, to our knowledge, we are the first work to analyze the interplay between these, extending the non-sequential work of~\cite{larsson2025numeraire} to the sequential setting. In particular, we provide the first matching necessary and sufficient condition in the literature for a sequential power one test  to exist for any null $\Pcal$ against any alternative $Q$.

As mentioned above, the intrinsic growth rate we establish in this paper, the $\infKL$, will be smaller than the $\KLinf$. However, letting $\Phi(R):=\inf_{P\in\Pcal}\KL(R\|P)$, Theorem~\ref{thm:main} proves that if $\Phi$ is weakly lower semicontinuous at $Q$, then
\[
\lim_{n\to\infty} \infKL(Q^n,\Pcal^n) =\KLinf(Q,\Pcal).
\]
Fortunately, this condition is only local at the alternative (as opposed to something more global, like a dominating reference measure, an assumption we always avoid). We note that weak compactness of $\Pcal$ is sufficient for this rate identity to hold. And under this same condition, in Proposition~\ref{prop:eprocess-rate}, we build a test supermartingale \emph{in a reduced filtration} that, under $Q$, achieves almost sure and expected-log growth rate of $\KLinf(Q,\Pcal)$. And if we threshold this process, we obtain (in Theorem~\ref{thm:powerone-characterization}) power-one sequential tests whenever this limiting rate is positive and vice-versa. Intuitively, one can see that under weak lower semicontinuity, the typical $\KLinf(Q,\Pcal)$ once again gains meaning. 
Further,  Proposition~\ref{prop:infkl-klinf} shows that the assumptions of weak compactness and convexity of $\Pcal$ together\footnote{Unfortunately, weak compactness alone is not enough without convexity. See Proposition~\ref{prop:weakly-compact-not-enough} for an example.} imply that
\[
a_1(Q,\Pcal)= \infKL(Q,\Pcal)=\KLinf(Q,\Pcal).
\]
In addition to point alternatives, we present many results for composite alternatives also, where $Q$ is only know to lie in some set $\Qcal$, and one would like to maximize the worst case growth rate of wealth over $\Qcal$. 
A natural guess for the answer could be
\[
\overline d_{\mathrm{wc}}:=\limsup_{n\to\infty}\frac1n \inf_{Q\in\Qcal} a_n(Q).
\]
Unfortunately, this in general is not the tight characterization of the maximal growth rate. To present the correct answer, first define the so-called $\mathrm{GROW}$ value \parencite{grunwald2024, arnold2026optimalevaluestestingmean} after $n$ observations as:
\[
b_n(\Qcal,\Pcal):=\sup_{E\in(\Pcal^n)^{\circ}}\inf_{Q\in\Qcal}\E_{Q^n}[\log E].
\]
This payoff maximizes the worst-case expected log-evidence over \emph{non-sequential} $n$ sample tests. Theorem~\ref{thm:composite-maxrate} proves that the intrinsic maximal uniform expected-log growth rate of any $e$-process against $\Qcal$ is a sequential extension of $\mathrm{GROW}$, which we denote $\overline d_{\mathrm{rob}}$. That is, we prove that
\[
\overline d_{\mathrm{rob}}  = \sup_{\text{wealth processes } W} \inf_{Q \in \Qcal} \limsup_{n \to \infty} \frac1n \E_Q[\log W_n] = \limsup_{n\to\infty}\frac{b_n(\Qcal,\Pcal)}{n}.
\]
Remarkably, this characterization is under no assumptions on $\Pcal$ and $\Qcal$. 
In many settings, one can further rewrite this $\mathrm{GROW}$ in terms of the relative entropy between $\Pcal$ and $\Qcal$. For example, if $\Qcal$ is convex and compact in the setwise topology, and $\Pcal$ is convex and weakly compact, then $b_n(\Qcal,\Pcal)/n = \inf_{Q \in \Qcal}\inf_{P \in \Pcal}\KL(Q,P)$. We give more such examples later.  

In general, we show (in Lemma~\ref{lem:bn-le-an}) that $\overline d_{\mathrm{rob}}\le \overline d_{\mathrm{wc}}$. 
We give two cases where these two values coincide: for both finite $\Qcal$ and more generally under a sub-exponential covering condition, there will be no asymptotic mini-max gap. These conditions are presented in Proposition~\ref{prop:finite-Q-gap} and Theorem~\ref{thm:subexp-cover}.

Finally, in Theorem~\ref{thm:uniform-power-one-compact}, we show that if  $\inf_{Q\in\Qcal}\KLinf(Q,\Pcal)>0$ along with $\Pcal$ and $\Qcal$ both being weakly compact, we can build a single test supermartingale with strictly positive asymptotic growth rate for every $Q\in\Qcal$. And if we threshold it, we obtain a power-one sequential test. 

\paragraph{Related Work.}
A number of works have established martingale arguments for constructing tests that remain valid under optional stopping \parencite{ville1939etude, doob1953stochastic, robbins1970}. At the same time, Kelly's criteria and later improvements connected expected log-wealth growth to relative entropy in simple settings and settings with the existence of a dominating reference measure  \parencite{kellyinterpretation, cover1991universal, CoverThomas2006}. In contrast, our work shows what will replace the $\KL$ identity when dealing with composite nulls and without a dominating reference measure (all the while ensuring validity at all stopping times).

More recently, a number of works on $e$-values and $e$-processes reframe nonnegative supermartingales as anytime-valid measures of statistical evidence against the null \parencite{shafer2011, howard2021, VovkWang2021, ramdas2023, KoningMeer2026}. As a consequence, issues with testing and reproducibility have (at least partially) been resolved. That is, we have both safe continuously monitored testing and calibration and merging of evidence across different analyses. Further, in this setting, general-purpose constructions and optimality notions like admissibility have been developed \parencite{ramdas2020admissible, grunwald2024}. Our work is the first assumption-free converse of this: we provide an achievability theorem for i.i.d.\ composite null testing. In doing so, we identify the maximal per-sample log-growth rate possible. Furthermore, for i.i.d.\ problems, we prove that optimizing over test supermartingales suffice in the sense that they already attain the full $e$-process envelope.

More specifically, in this safe-testing literature, growth-rate optimal ($\mathrm{GRO}$) $e$-variables for composite hypotheses have been established, along with its strong duality characterization terms in terms of the reverse information projection \parencite{grunwald2024, HarremoesLardyGrunwald2023, larsson2025numeraire}. Our work here is the asymptotic analogue of these finite-horizon results. We show that the sequential limiting growth rate (for i.i.d.\ data) is a normalized reverse $\KL$ to the bipolar $(\Pcal)^{\circ\circ}$. We emphasize here the evident gap between this bipolar rate and the well-studied $\KLinf(Q,\Pcal)$. We emphasize that, in general, the correct maximal growth rate of the $\infKL$ is only equal to the $\KLinf$ under certain topological conditions, such as weak lower semicontinuity. 

For composite hypotheses without a dominating reference measure, \cite{larsson2026completecharacterizationtestablehypotheses}  have completed Le Cam's foundational program~\cite{lecam1986asymptotic} of when nontrivial fixed-sample tests exist; in particular, one must pass to the weak-$\star$ closures in the dual space of finitely additive charges. Separately, recent work has also presented weak compactness of the null class as a general sufficient condition that guarantees existence of a power-one sequential test against the entire complement of the composite null $\Pcal$ \parencite{ram2026powersequentialtestsexist}. For such tests, it has been established that the sharp general lower-bound for any sequential procedure in the low type I error regime (ie as $\alpha\downarrow 0$) is $\log(1/\alpha)/\KLinf$ \parencite{agrawal2025stoppingtimespoweronesequential}. This bound has been further analyzed in terms of what constructions asymptotically match this \parencite{shekhar2025optimalanytimevalidtestscomposite}. However, none of these works provide an assumption-free condition for existence of power-one tests that is both \textit{necessary and sufficient} for point-alternatives. We explicitly do this (along with sufficient conditions for composite alternatives) in terms of strict positivity of $a_n(Q)$. In addition, we clarify under what restrictions the very-well studied (yet unfortunately incorrect) $\KLinf$ term is the governing complexity measure for sequentially testing composite nulls. 

\paragraph{Paper outline.}
The rest of this paper is organized as follows. In Section~\ref{sec:warmup}, we present all the foundational lemmata to understand the main results. In Section~\ref{sec:point-alternative-infKL}, we present our assumption-free growth theorem for point alternatives, along with several consequences in power-one testing, including a necessary and sufficient condition for existence. In Section~\ref{sec:bipeqklinf}, we present sufficient conditions under which the $\KLinf$ is the limiting growth rate. In Section~\ref{sec:counterx}, we highlight numerous examples and counter-examples that show the consequences of what happens if our hypotheses in these theorems are violated. In particular, perhaps the most important we show is why weak lower semicontinuity is only sufficient and not necessary. In Section~\ref{sec:compQintrins}, we illustrate the maximal intrinsic growth rate in the composite alternative setting, as well as some consequences when this intrinsic growth rate is positive. In this section, we also present results characterizing existence of power-one sequential tests under the setting of composite null versus composite alternatives. We show that test supermartingales suffice for achievability in Section~\ref{sec:super} and conclude and present open problems in Section~\ref{sec:cute}. All omitted proofs can be found in Section~\ref{sec:proofs-apendix}.

\section{Preliminaries}\label{sec:warmup}
Let $(\X, \B)$ be a measurable space.
We let $\mathcal{M}_1$ be the set of probability measures on $(\X,\B)$. (If needed, we may assume that $(\X, \B)$ is a Polish space with its Borel $\sigma$-field, $\B$.)

\begin{definition}\label{def:bipolar}
For any measurable space $(\Omega, \mathcal{F})$ and nonempty set of probability measures $\mathcal{S}$ on this space, 
define its polar as
\[
\mathcal{S}^{\circ}:=\Bigl\{E:\Omega\to[0,\infty]:\sup_{P\in\mathcal{S}} \E_P[E] \le 1\Bigr\}.
\]
Following~\cite{larsson2025numeraire}, the bipolar is then defined as
\[
\mathcal{S}^{\circ\circ} := \Bigl\{R\in\mathcal{M}_1(\Omega): \E_R[E] \le 1\ \text{ for all }E\in\mathcal{S}^{\circ} \Bigr\}.
\]
\end{definition}

\noindent Note that $\mathcal S\subseteq \mathcal S^{\circ\circ}$, because if $P\in\mathcal P$ and $E\in\mathcal S^{\circ}$, then $\E_P[E]\le \sup_{P'\in\mathcal S}\E_{P'}[E]\le 1$. 
We use $\Omega$ above instead of $\B$ because, for example, we will also use it on product spaces like $(\X^n, \B^{\otimes n})$. For $P \in \mathcal{M}_1(\X)$, let $P^n$ be the $n$ fold product measure on $(\X^n, \B^{\otimes n})$.

For $\Pcal \subseteq \mathcal{M}_1(\X)$ and $Q \in \mathcal{M}_1(\X)$, 
recall  $\KLinf(Q,\Pcal):=\inf_{P\in\Pcal}\KL(Q\|P)$ and 
define
\[
\infKL(Q^n,\Pcal^n):=\inf_{R\in(\Pcal^n)^{\circ\circ}} \KL(Q^n\|R)=\KLinf(Q^n,(\Pcal^n)^{\circ\circ})\in[0,\infty].
\]
It is shown that the above infimum is attained by a subprobability measure $P_n^\star$ (unique up to $Q$ null sets) called the \emph{reverse information projection} (RIPr) of $Q^n$ onto $(\Pcal^n)^{\circ\circ}$ \parencite{larsson2025numeraire}. 
One can show that for $n \geq 1$:
\[
\frac{1}{n}\,\infKL(Q^n,\Pcal^n) \leq \KL_{\inf}(Q,\Pcal).
\]
We will later show that in general the above inequality can be strict, but that equality holds under some simple structural assumptions on $\Pcal$, even without assuming a dominating reference measure. 

\begin{definition}\label{def:kl}
For $M, N\in\mathcal{M}_1(\X)$ we denote the KL divergence as
\[
\KL(M\|N):=
\begin{cases}
\displaystyle \int_\X \log\left(\frac{dM}{dN}\right)dM, &\text{if } M\ll N,\\[0.6em]
+\infty, &\text{otherwise.}   
\end{cases}
\]
For $p,q\in[0,1]$, the binary KL divergence is defined as
\[
D(p\|q):=p\log\frac{p}{q} + (1-p)\log\frac{(1-p)}{(1-q)},
\]
with the following conventions: if $q\in[0,1]$, $0\log (0/q):=0$, 
and if $p>0$, we take $p\log(p/0):=+\infty$, while $\log 0:=-\infty$. If $Z:\Omega\to[0,\infty]$ is measurable and $\mu\in\mathcal M_1(\Omega)$, then
\[
\E_{\mu}[\log Z]:=\sup_{M\ge 1}\int_{\Omega}\log(Z\wedge M)\,d\mu\in[-\infty,\infty].
\]
\end{definition}

\noindent 
Weak convergence of measures is denoted by $\Rightarrow$.  
The map $(M, N)\mapsto\KL(M\|N)$ from $\mathcal{M}_1(\X)\times\mathcal{M}_1(\X)$ to $[0,\infty]$ is lower semicontinuous for the weak topology on both coordinates.

To further clarify,  $\mathcal M_1(\X)$ is endowed with the weak topology, which is the coarsest topology for which the $P\mapsto \int f dP$ is continuous for every bounded continuous $f\in C_b(\X)$. Equivalently, $P_n\Rightarrow P$ if and only if for every $f\in C_b(\X)$,
\(
\int f\,dP_n\to\int f\,dP.
\)
We say that a subset $\Pcal\subset \mathcal M_1(\X)$ is weakly compact if it is compact in this topology. Because $\X$ is Polish, the weak topology on $\mathcal M_1(\X)$ is metrizable; for example, by the bounded Lipschitz metric. From this, one can interpret weak compactness sequentially. That is, every sequence $(P_n)\subseteq\Pcal$ has a subsequence $P_{n_k}\Rightarrow P$ for some $P\in\Pcal$. Prokhorov's theorem implies that $\Pcal$ being weakly compact is equivalent to $\Pcal$ being weakly closed and tight (for every $\varepsilon>0$, there exists some compact set $K_\varepsilon\subseteq \X$ such that $\sup_{P\in\Pcal} P(K_{\varepsilon}^c)\le \varepsilon$). Examples of weakly compact $\Pcal$ include all laws supported on a compact set in $\mathbb R^d$ (eg:  distributions on $[0,1]$ with mean  $\leq 0.5$), or exponential families where the parameter is in a closed set in $\mathbb R^d$, etc. 

Our proof of asymptotic exactness will need a way to choose a weak neighborhood of $Q$ where the composite divergence $\Phi(R):=\inf_{P\in\mathcal P}\KL(R\|P)$ does not drop substantially below $\Phi(Q)=\KLinf(Q,\Pcal)$. Note that $\Phi$ can fail to be lower semicontinuous if we lack some compactness or attainment property on $\Pcal$. 
We say that $\Pcal$ is \underline{$\KLinf$ lower semicontinuous at $Q$} if $\Phi$ is lower semicontinuous at $Q$ for the weak topology. Meaning that if for every sequence $R_k\Rightarrow Q$, we have also $\Phi(Q)\le\liminf_{k\to\infty}\Phi(R_k)$.   
As an important sufficient condition, if $\Pcal$ is weakly compact in $\mathcal{M}_1(\X)$, then $\Pcal$ is $\KLinf$ lower semicontinuous at every $Q$ \cite{ram2026powersequentialtestsexist}. 

We will now clarify some important conventions and the filtrations we are working on. Unless otherwise stated, all the sequential processes in this work are defined on $\Omega=\X^{\mathbb N}$ with coordinate maps $X_1, X_2, \dots$, and we set $\mathcal F_0:=\{\varnothing, \Omega\}$ and $\mathcal F_n:=\sigma(X_1,\dots,X_n)$ for $n\ge 1$. We write $\bbF:=(\mathcal F_n)_{n\ge 0}$ for the original filtration. We define (strictly) increasing sequences of time indices as 
\[
\Tdet:=\Big\{\bt=(t_k)_{k\ge 0}:= 0=t_0<t_1<t_2<\dots,\, t_j\in\mathbb N \text{ for all $j \geq 1$}
\Bigr\}.
\]
For $\bt\in\Tdet$, we define the reduced blockwise filtration as $\bbF_{\bt}:=(\mathcal F_{t_k})_{k\ge 0}$. Therefore, a $\bbF_{\bt}$ stopping time is a map $\tau:\Omega\to\{t_k:k\ge 0\}$ such that $\{\tau\le t_k\}\in \mathcal F_{t_k}$ for every $k\ge 0$. Note that when $\bt = \mathbb N_0$, we have $\bbF_{\bt} = \bbF$, so the blockwise filtrations recover the original filtration as a special case. 

More generally, if $\bbG=(\mathcal G_n)_{n\ge 0}$ is a filtration on the same space we write $\bbG\preceq \bbF$ whenever $\mathcal G_n\subseteq \mathcal F_n$ for all $n\ge 0$. For $\bt\in\Tdet$, we write $\bbG_{\bt}:=(\mathcal G_{t_k})_{k\ge 0}$ as the corresponding reduced blockwise filtration.

\begin{definition}[E-processes (on possibly  reduced filtrations)]\label{def:eprocess}
We say that a nonnegative adapted process $E=(E_s)_{s\in \bt}$ is an \underline{$e$-process for $\Pcal$ on $\bbG_{\bt}$} if for every $P\in\Pcal$ and every finite-valued $\bbG_\bt$-stopping time $\tau$, we have $\E_{P^{\infty}}[E_{\tau}]\le 1$. The class of all such processes is denoted by $\mathcal E_{\bbG_\bt}(\Pcal)$. 
The elements of $\mathcal E(\Pcal):= \mathcal E_{\bbF}(\Pcal)$ 
are simply called \underline{$e$-processes}, while those in some generic $\mathcal E_{\bbF_\bt}(\Pcal)$ are called \underline{blockwise e-processes}. A blockwise e-process $E$ (on some $\bbF_\bt$) can be lifted to an e-process (on $\bbF$) by defining $E_j = 0$ for $j \notin \bt$. 
\end{definition}

An e-process corresponds to the wealth process of a gambler who bets on every observation (technically, they can bet separately against each $P \in \Pcal$, but their net wealth is the infimum of their wealths in each game, see~\cite{ramdas2022testing} for details).
A blockwise e-process arises because a gambler does not wish to bet on every observation one by one, but bets on blocks of observations instead. An important class of (blockwise) e-processes are (blockwise) test supermartingales, defined next.

\begin{definition}[Blockwise test supermartingale]\label{def:test-supermartingale}
Let $\bt=(t_k)_{k\ge 0}\in \Tdet$. A blockwise test supermartingale for $\Pcal$ on $\bbF_{\bt}$ is a nonnegative process $(S_{t_k})_{k\ge 0}$ such that $S_{t_0}=1$, each $S_{t_k}$ is $\mathcal F_{t_k}$ measurable and for every $P\in\Pcal$ we have that for all $k\ge 1$ almost surely under $P^{\infty}$,
\[
\E_{P^{\infty}}[S_{t_k}\mid \mathcal F_{t_{k-1}}]\le S_{t_{k-1}}.
\]
\end{definition}
Due to the optional stopping theorem, every blockwise test supermartingale on $\bbF_{\bt}$ is a blockwise $e$-process, hence an element of $\mathcal E_{\bbF_{\bt}}(\Pcal)$. 

\begin{definition}[Wealth processes]
For any subfiltration $\bbG\preceq\bbF$ we define the class of wealth processes on $\bbG$ as
\[
\Wcal_{\bbG}(\Pcal):=
\bigcup_{\bt\in\Tdet}\mathcal E_{\bbG_{\bt}}(\Pcal).
\]
Therefore, a wealth process is either an $e$-process on $\bbG$ or an $e$-process on some reduced block filtration $\bbG_{\bt}$. In the special case of $\bbG=\bbF$, we write $\Wcal(\Pcal):=\Wcal_{\bbF}(\Pcal)$. Each $W\in\Wcal_{\bbG}(\Pcal)$ is understood together with its indexing filtration.
\end{definition}
Wealth processes are named as such because they correspond to the sequences of wealths that any gambler can possibly achieve by defining various betting games in which they can bet against the null $\Pcal$ (or more accurately, against $\Pcal^\infty$).


Throughout this work, all our statements about processes on the original filtration will refer to an $e$-process on $\bbF$. All statements about reduced filtrations will refer to a blockwise $e$-process, i.e.\ an $e$-process on some $\bbF_{\bt}$.


\section{Point $Q$: Intrinsic Growth Rate and Sequential Testability}\label{sec:point-alternative-infKL}

We will now explicate something seemingly contradictory to the literature. Often, the $\KLinf$ is seen as the key quantity in optimal bounds. However, we will show that in general, $e$-processes cannot grow any faster than the $\infKL$, and the $\KLinf$ may not ever be achieved. 

Recall the shorthands of $\Pcal^n:=\{P^n:P\in\Pcal\}$ and $a_n:=\infKL(Q^n,\Pcal^n):=\inf_{R\in(\Pcal^n)^{\circ\circ}}\KL(Q^n\|R)\in[0,\infty]$, where $(\Pcal^n)^{\circ\circ}$ is the bipolar. 
For a fixed horizon $n$, define the $e$-power, i.e.\ the expected log capital as
\[
\EPow(Q,\Pcal):=\sup_{E\in(\Pcal^n)^\circ}\E_{Q^n}[\log E].
\]
The main result of \cite{larsson2025numeraire} is the following strong duality, with no restrictions on $Q,\Pcal$.
\begin{proposition}[\cite{larsson2025numeraire}]\label{prop:infkl-e-power}
$\EPow(Q,\Pcal) = \inf_{R\in(\Pcal^n)^{\circ\circ}}\KL(Q^n \|R)$.
\end{proposition}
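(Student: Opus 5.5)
This is the strong duality of \cite{larsson2025numeraire} applied on the product space $(\X^n,\B^{\otimes n})$ with the null $\mathcal S=\Pcal^n$ and the alternative $Q^n$. The plan is to prove the two inequalities separately. Throughout, write $\mathcal C:=(\Pcal^n)^{\circ}$, which is a convex, solid set of $[0,\infty]$-valued functions containing the constant $1$.

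\emph{Weak duality} ($\le$). Fix $E\in\mathcal C$ and $R\in(\Pcal^n)^{\circ\circ}$ with $\KL(Q^n\|R)<\infty$, so that $Q^n\ll R$. By the definition of the bipolar, $\E_R[E]\le 1$. Writing $q=dQ^n/dR$ and applying Jensen to the truncations $E\wedge M$, we get
\[
\E_{Q^n}[\log(E\wedge M)]-\KL(Q^n\|R)=\E_{Q^n}\!\left[\log\frac{E\wedge M}{q}\right]\le \log \E_R[E\wedge M]\le 0 .
\]
Letting $M\to\infty$ and then taking the supremum over $E$ and the infimum over $R$ gives $\EPow\le\inf_R\KL$.

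\emph{Strong duality} ($\ge$). We may assume $\EPow<\infty$; otherwise the inequality is trivial.
\begin{enumerate}
\item Show that the supremum defining $\EPow$ is attained by a numéraire $E^\star\in\mathcal C$, meaning that $\E_{Q^n}[E/E^\star]\le 1$ for all $E\in\mathcal C$. Take a maximizing sequence $E_k$. Convexity of $\mathcal C$ and strict concavity of $\log$ give that the $E_k$ are Cauchy in $Q^n$-probability; this is obtained by comparing with the midpoints $(E_k+E_j)/2$. Komlós-type forward convex combinations then converge $Q^n$-a.s. to some $E^\star$. Fatou's lemma keeps $E^\star$ in $\mathcal C$, since $\mathcal C$ is closed under a.s. limits by Fatou under each $P^n$. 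To check optimality, perturb along $(1-\varepsilon)E^\star+\varepsilon E$ and differentiate at $\varepsilon=0$. This first-order condition gives $\E_{Q^n}[E/E^\star]\le1$.
\item Define the measure $R^\star$ by $dR^\star:=(1/E^\star)\,dQ^n$ on $\{E^\star>0\}$, completed by the part of mass not charged by $Q^n$. One then needs $R^\star$ to lie in the bipolar, i.e. $\E_{R^\star}[E]\le 1$ for every $E\in\mathcal C$, which is exactly the numéraire property from step 1. The subtlety is that $R^\star$ may be a subprobability measure. Larsson et al. show that the bipolar as they define it accommodates this, or equivalently that the infimum over probability measures in the bipolar is approached by such an $R^\star$. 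I would adopt their convention here.
\item Compute $\KL(Q^n\|R^\star)=\E_{Q^n}[\log E^\star]=\EPow$. This yields $\inf_R\KL\le\EPow$.
\end{enumerate}

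\emph{Main obstacle.} The hard part is step 1 together with the membership claim in step 2, because no dominating measure is assumed. The Komlós argument has to be run under $Q^n$ alone. Closedness of $\mathcal C$ must then be checked under each $P^n$ separately, even though $P^n$ need not be absolutely continuous with respect to $Q^n$. I would handle this by allowing $E=\infty$ on $Q^n$-null sets, which is harmless for $\E_{Q^n}$, and by using Fatou under each $P^n$ along the chosen subsequence. Since the paper cites this result directly, in the text it suffices to invoke \cite{larsson2025numeraire} with $(\Omega,\mathcal F)=(\X^n,\B^{\otimes n})$.
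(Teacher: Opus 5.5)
Your bottom line is to invoke \cite{larsson2025numeraire} on $(\X^n,\B^{\otimes n})$, and that is exactly what the paper does. Your weak-duality half is also correct; it is the computation in the paper's proof of Lemma~\ref{lem:eprocess-upper-by-infkl}, with Jensen in place of $\log x\le x-1$. However, your self-contained sketch of the reverse inequality has a genuine gap at Step~2. Definition~\ref{def:bipolar} puts $(\Pcal^n)^{\circ\circ}$ inside $\mathcal M_1$, and the sub-probability measure $dR^\star=(1/E^\star)\,dQ^n$ generally cannot be ``completed'' to a probability measure in the bipolar. For example, take $n=1$, $\X=\{0,1,2,\dots\}$, $Q=\delta_0$, and $P_k=s_k\delta_0+(1-s_k)\delta_k$ with $s_k\uparrow s<1$ strictly. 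Then $E^\star=s^{-1}\mathbf 1_{\{0\}}$ and $R^\star=s\delta_0$. Now let $E:=s^{-1}\mathbf 1_{\{0\}}+\sum_{k\ge1}c_k\mathbf 1_{\{k\}}$ with $c_k:=(1-s_k/s)/(1-s_k)>0$. It satisfies $\E_{P_k}[E]=1$ for all $k$, so $E\in\Pcal^\circ$. Yet $\E_R[E]=1+\sum_k c_kR(\{k\})>1$ for every probability measure $R$ with $R(\{0\})=s$. So the value $\log(1/s)$ is not attained in $\Pcal^{\circ\circ}$; it is only approached, here along the $P_k$. What your Steps~2--3 actually prove is the identity with the infimum over sub-probability measures, i.e.\ Larsson et al.'s effective null. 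Your ``equivalently'' is not an equivalence. To get the statement with the paper's $\mathcal M_1$-valued bipolar, you need a separate approximation argument showing that probability measures in $(\Pcal^n)^{\circ\circ}$ achieve $\KL(Q^n\|R)$ arbitrarily close to $\KL(Q^n\|R^\star)$. Nothing in your steps supplies this. The paper's own aside, that the infimum is attained by a subprobability measure, shows it tacitly uses Larsson et al.'s convention; your sketch inherits this mismatch rather than resolving it.

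Step~1 also has a hole. The perturbation along $(1-\varepsilon)E^\star+\varepsilon E$ only yields the num\'eraire inequality once you know that $E^\star$ attains $\EPow$. Almost-sure convergence of a maximizing sequence does not give $\E_{Q^n}[\log E^\star]\ge\EPow$, because Fatou goes the wrong way for $\log^+$. Your midpoint idea repairs this if you push it further. From $\log\frac{a+b}{2}=\frac12\log a+\frac12\log b+\log\cosh\bigl(\tfrac12\log\tfrac ab\bigr)$ you get $\E_{Q^n}\bigl[\log\cosh\bigl(\tfrac12\log(E_j/E_k)\bigr)\bigr]\le\EPow-\tfrac12\E_{Q^n}[\log E_j]-\tfrac12\E_{Q^n}[\log E_k]\to0$. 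Since $\log\cosh(y/2)\ge c\min(y^2,|y|)$, the sequence $\log E_k$ is Cauchy in $L^1(Q^n)$. Hence the limit satisfies $\E_{Q^n}[\log E^\star]=\EPow$, and Koml\'os is unnecessary. Finally, do not set $E=\infty$ on $Q^n$-null sets. Such a set can be charged by some $P^n$, which would push $E$ out of $(\Pcal^n)^\circ$. Instead, define $E^\star:=\liminf_i E_{k_i}$ pointwise along an a.s.-convergent subsequence; Fatou under each $P^n$ then gives $E^\star\in(\Pcal^n)^\circ$ directly.
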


We will now explicate a very useful observation about these $n$-step bipolars.

\begin{lemma}\label{lem:an-superadditive-sec}
For each $Q\in\mathcal M_1(\X)$, the sequence $(a_n(Q))_{n\ge 1}$ is superadditive. Meaning for all $n,m\in\mathbb N$, $a_{n+m}(Q)\ge a_n(Q)+a_m(Q)$. So it follows that by Fekete's lemma,
\[
d_{\star}(Q,\Pcal)=\lim_{n\to\infty}\frac1n a_n(Q)=\sup_{n\ge 1}\frac1n a_n(Q)\in[0,\infty].
\]
\end{lemma}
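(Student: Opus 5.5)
The plan is to argue on the dual (e-power) side, using the strong duality of Proposition~\ref{prop:infkl-e-power}: $a_n(Q)=\EPow(Q,\Pcal)=\sup_{E\in(\Pcal^n)^\circ}\E_{Q^n}[\log E]$. Superadditivity then reduces to a product construction. Given $E\in(\Pcal^n)^\circ$ and $F\in(\Pcal^m)^\circ$, set $G(x_1,\dots,x_{n+m}):=E(x_1,\dots,x_n)\,F(x_{n+1},\dots,x_{n+m})$, a nonnegative measurable function on $\X^{n+m}$.

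\textbf{Step 1: $G$ is an e-variable.} For every $P\in\Pcal$, the product structure $P^{n+m}=P^n\otimes P^m$ and Tonelli give
\[
\E_{P^{n+m}}[G]=\E_{P^n}[E]\,\E_{P^m}[F]\le 1,
\]
so $G\in(\Pcal^{n+m})^\circ$. The same $P$ appears in both factors, which is exactly why the i.i.d.\ structure of the null makes this work.

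\textbf{Step 2: the log-utility is additive.} Under $Q^{n+m}$ we want $\E_{Q^{n+m}}[\log G]=\E_{Q^n}[\log E]+\E_{Q^m}[\log F]$. This is the only delicate step, because of the truncated-log convention in Definition~\ref{def:kl} and possible infinite values.

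\begin{itemize}
\item If either $a_n(Q)=\infty$ or $a_m(Q)=\infty$, the inequality we want is immediate once we show $a_{n+m}=\infty$. This follows from the same argument with $\E_{Q^n}[\log E]$ arbitrarily large.
\item If either right-hand term is $-\infty$, there is nothing to prove. Moreover, $E\equiv1$ gives $a_n\ge 0$, so we may restrict to $E,F$ with $\E[\log E],\E[\log F]\in(-\infty,\infty)$ (possibly $+\infty$ in the first case above).
\end{itemize}

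In the finite case, $\log E\in L^1(Q^n)$ except for its positive part. I would first restrict to $E$ with $\E_{Q^n}[(\log E)^-]<\infty$, which is true whenever the expectation is finite. Then write $\log G=\log E\circ\pi_1+\log F\circ\pi_2$ $Q^{n+m}$-a.s. (note $E,F>0$ $Q$-a.s. since the expectations are $>-\infty$). The negative parts are integrable, so the expectation of the sum equals the sum of expectations in the extended sense.

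Finally, check that the truncated definition $\sup_M\int\log(G\wedge M)$ agrees with the ordinary integral $\int\log G$ whenever the negative part is integrable. This holds by monotone convergence applied to $\log(G\wedge M)+(\log G)^-$.

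\textbf{Step 3: conclude.} Taking suprema over $E$ and $F$ gives
\[
a_{n+m}(Q)\ge \sup_{E,F}\bigl(\E[\log E]+\E[\log F]\bigr)=a_n(Q)+a_m(Q).
\]

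\textbf{Step 4: Fekete.} Apply Fekete's lemma for superadditive sequences with values in $[0,\infty]$. Then $\lim a_n/n$ exists and equals $\sup_n a_n/n$. If some $a_k=\infty$, superadditivity gives $a_n=\infty$ for all $n\ge k$, so both sides are $\infty$ and the statement still holds.

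I expect the main obstacle to be the measure-theoretic bookkeeping in Step 2 (extended-valued logs and the truncation convention), not the construction itself.
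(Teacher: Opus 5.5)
Your proposal is correct and follows essentially the same route as the paper's proof: strong duality (Proposition~\ref{prop:infkl-e-power}), the product e-variable $G(x_1,\dots,x_{n+m})=E(x_1,\dots,x_n)F(x_{n+1},\dots,x_{n+m})\in(\Pcal^{n+m})^\circ$, additivity of the expected log under $Q^{n+m}=Q^n\otimes Q^m$, and Fekete's lemma. Your extra bookkeeping fills in details the paper treats only briefly: the truncated-log convention in Step 2, and the observation that $a_k=\infty$ forces $a_n=\infty$ for all $n\ge k$ in Step 4.
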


We will now present an impossibility result for $e$-processes on any filtration $\bbF_{\bt}$.

\begin{lemma}\label{lem:eprocess-upper-by-infkl}
Let $\bt=(t_k)_{k\ge 0}\in\Tdet$ and let $W=(W_{t_k})_{k\ge 0}\in\mathcal E_{\bbF_{\bt}}(\Pcal)$ be a blockwise $e$-process for $\Pcal$. Now, for each $n\in\mathbb N$, recall that $a_n:=\infKL(Q^n,\Pcal^n)=\inf_{R\in(\Pcal^n)^{\circ\circ}}\KL(Q^n\|R)\in[0,\infty]$. Then it follows that for every $k\ge 1$ we have that
\[
\E_{Q^\infty}[\log W_{t_k}] =\E_{Q^{t_k}}[\log W_{t_k}]\le a_{t_k}.
\]
And so consequently we have that
\[
\limsup_{k\to\infty}\frac{1}{t_k}\E_{Q^\infty}[\log W_{t_k}]\le \lim_{n\to\infty}\frac{1}{n}a_n.
\]
\end{lemma}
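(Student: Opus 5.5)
The plan is to reduce to the fixed-horizon duality of Proposition~\ref{prop:infkl-e-power}. Fix $k\ge 1$ and set $n:=t_k$. Since $W_{t_k}$ is $\mathcal F_{t_k}$-measurable, it is a measurable function of $(X_1,\dots,X_n)$ alone; call it $E:\X^n\to[0,\infty]$. Because the marginal of $Q^\infty$ on the first $n$ coordinates is $Q^n$, the identity $\E_{Q^\infty}[\log W_{t_k}]=\E_{Q^{n}}[\log E]$ holds. This also holds in the extended sense of Definition~\ref{def:kl}, since each truncation $\log(E\wedge M)$ is a function of the first $n$ coordinates only.

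Next I will show that $E\in(\Pcal^n)^\circ$. The constant time $\tau\equiv t_k$ is a finite-valued $\bbF_\bt$-stopping time, because $\{\tau\le t_j\}$ is either $\varnothing$ or $\Omega$. By Definition~\ref{def:eprocess}, for every $P\in\Pcal$ we get $\E_{P^n}[E]=\E_{P^\infty}[W_{t_k}]\le 1$. Taking the supremum over $P\in\Pcal$ gives $\sup_{P\in\Pcal}\E_{P^n}[E]\le 1$, which is exactly membership in the polar of $\Pcal^n=\{P^n:P\in\Pcal\}$. Then $\E_{Q^n}[\log E]\le \EPow(Q,\Pcal)$ with $n=t_k$, and by Proposition~\ref{prop:infkl-e-power} this equals $\inf_{R\in(\Pcal^n)^{\circ\circ}}\KL(Q^n\|R)=a_{t_k}$. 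This proves the first claim.

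For the asymptotic claim, divide by $t_k$ to get $\frac{1}{t_k}\E_{Q^\infty}[\log W_{t_k}]\le a_{t_k}/t_k$. By Lemma~\ref{lem:an-superadditive-sec}, $a_n/n\to d_\star(Q,\Pcal)=\sup_n a_n/n$ in $[0,\infty]$. Since $t_k\to\infty$ strictly, $a_{t_k}/t_k$ converges along this subsequence to the same limit, and taking $\limsup_k$ gives the result. One could even skip the limit and use the bound $a_{t_k}/t_k\le\sup_n a_n/n$ directly.

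There is no real obstacle here. The only points needing care are bookkeeping: the convention for $\E[\log\cdot]$ when $E$ may be infinite or zero, the use of the deterministic time as a stopping time, and the case $a_n=\infty$, in which the inequality is trivial.
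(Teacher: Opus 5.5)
Your argument is correct and follows the paper's route: Doob--Dynkin plus the deterministic stopping time $\tau\equiv t_k$ put $W_{t_k}$ in $(\Pcal^{t_k})^\circ$, you bound its expected log by $a_{t_k}$, and you pass to the limit via Lemma~\ref{lem:an-superadditive-sec}. The only difference is that you cite Proposition~\ref{prop:infkl-e-power} for the bound, whereas the paper proves this direction directly for each $R\in(\Pcal^n)^{\circ\circ}$. It writes $\log w_k=\log(w_k/f)+\log f$ with $f=dQ^n/dR$ and applies $\log x\le x-1$ with $\E_R[w_k]\le 1$, so the lemma does not depend on the strong duality theorem.
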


At this point, we are ready to present our achievability proposition, proving that an $e$-process can attain the $\infKL$ rate arbitrarily closely. In particular, let us denote the asymptotic $\infKL$ rate by
\[
\overline d_{\infKL}:=\limsup_{n\to\infty}\frac{1}{n}\infKL(Q^n,\Pcal^n)\in[0,\infty].
\]
And, if this limit does indeed exist, we will denote it by $d_{\infKL}$. In words: what our next proposition tells us that $\overline d_{\infKL}$ is not at all an upper bound only, rather it is achievable arbitrarily closely by a blockwise $e$-process. 

\begin{proposition}\label{prop:achieve-infkl-rate}
Assume that $\overline d_{\infKL}<\infty$. Then, for every $\varepsilon>0$, there exist $\bt=(t_k)_{k\ge 0}\in\Tdet$ and a blockwise test supermartingale $(E_{t_k})_{k\ge 0}$ for $\Pcal$ on $\bbF_{\bt}$ such that under $Q^{\infty}$,
\[
\boxed{
\lim_{k\to\infty}\frac{1}{t_k}\E_{Q^\infty}[\log E_{t_k}]\ge \overline d_{\infKL}-\varepsilon.
}
\]
And moreover,
\[
\lim_{k\to\infty}\frac{1}{t_k}\log E_{t_k} = \lim_{k\to\infty}\frac{1}{t_k}\E_{Q^\infty}[\log E_{t_k}] \quad Q^\infty\text{-a.s.}
\]
Thus, suppose $d_{\infKL}:=\lim_{n\to\infty}\frac1n\infKL(Q^n,\Pcal^n)$ exists and is finite. Then, for every $\varepsilon>0$ there exists a blockwise test supermartingale on some $\bbF_{\bt}$ with asymptotic growth rate at least $d_{\infKL}-\varepsilon$ both almost surely and in expectation.
\end{proposition}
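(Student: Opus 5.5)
Fix $\varepsilon>0$. Since by Lemma~\ref{lem:an-superadditive-sec} the limit $d_\star=\lim_n a_n/n=\sup_n a_n/n$ exists and equals $\overline d_{\infKL}<\infty$, we choose a block length $m$ with $a_m/m\ge \overline d_{\infKL}-\varepsilon/2$ and $a_m<\infty$. We then take the arithmetic block schedule $t_k:=km$. The idea is to bet independently on each block with a near-optimal $m$-sample $e$-variable, and multiply.

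\textbf{Step 1 (one-block $e$-variable).} By Proposition~\ref{prop:infkl-e-power}, $\EPow[m](Q,\Pcal)=a_m$, so there is $E\in(\Pcal^m)^\circ$ with $\E_{Q^m}[\log E]\ge a_m-m\varepsilon/2$. We would like $\log E$ to be $Q^m$-integrable, so that the strong law applies. The positive part is controlled: replacing $E$ by the numéraire/RIPr likelihood ratio $dQ^m/dP_m^\star$ gives $\E_{Q^m}[\log E]=a_m<\infty$. Alternatively, we use $E'=(E+1)/2\in(\Pcal^m)^\circ$ (a convex combination of $e$-variables), whose log is bounded below by $-\log 2$. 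Concavity gives $\E\log E'\ge \tfrac12\E\log E-\tfrac12\log 2$, which costs a constant. To avoid that loss we instead mix $E'=(1-\delta)E+\delta$ with small $\delta$. Then $\log E'\ge\log\delta$ and $\E\log E'\ge \E\log E+\log(1-\delta)$. The positive part satisfies $\E(\log E')^+\le \E(\log E)^++\log 2$, and this is finite because $\E_{Q^m}[\log E]\le a_m<\infty$ and the negative part is handled by the truncation convention. Choosing $\delta$ small keeps $\E_{Q^m}\log E'\ge a_m-m\varepsilon$ with $\log E'\in L^1(Q^m)$.

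\textbf{Step 2 (supermartingale).} Define $E_{t_0}=1$ and $E_{t_k}:=\prod_{j=1}^k E'(X_{(j-1)m+1},\dots,X_{jm})$. Under any $P^\infty$ with $P\in\Pcal$, the $k$-th factor is independent of $\mathcal F_{t_{k-1}}$ and has $P^m$-mean at most $1$. Hence $\E_{P^\infty}[E_{t_k}\mid\mathcal F_{t_{k-1}}]\le E_{t_{k-1}}$, so this is a blockwise test supermartingale on $\bbF_{\bt}$. This step is routine.

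\textbf{Step 3 (rates).} Under $Q^\infty$ the block factors are i.i.d.\ with integrable logs, so $\E\log E_{t_k}=k\,\E_{Q^m}\log E'$. Therefore $\frac1{t_k}\E\log E_{t_k}=\frac1m\E_{Q^m}\log E'\ge \overline d_{\infKL}-\varepsilon$, and this is constant in $k$, so the limit exists. The strong law of large numbers then gives $\frac1{t_k}\log E_{t_k}=\frac1m\cdot\frac1k\sum_j\log E'_j\to\frac1m\E\log E'$ almost surely, which is the same limit. The final sentence of the statement is immediate, since $d_{\infKL}=\overline d_{\infKL}$ when the limit exists.

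The main obstacle I expect is Step 1's integrability. The ordering of the truncated-expectation convention must be handled so that $\E\log E$ is finite and the SLLN applies. The $\delta$-mixture with the constant $1$ addresses this, together with the observation that $\E_{Q^m}[\log E]\le a_m<\infty$ for every $E\in(\Pcal^m)^\circ$, which bounds the positive part.
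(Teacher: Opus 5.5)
Your proposal is correct and is essentially the paper's proof: choose $m$ with $a_m/m$ near $\overline d_{\infKL}$, take a near-optimal $E\in(\Pcal^m)^\circ$ from Proposition~\ref{prop:infkl-e-power}, multiply over i.i.d.\ blocks to get a constant expected rate, and apply the SLLN; the $\delta$-mixture with the constant $1$ is harmless but unnecessary, since, as the paper notes, $a_m-\varepsilon m/2\le \E_{Q^m}[\log E]\le a_m$ already forces $\log E\in L^1(Q^m)$ under the truncation convention. One bookkeeping fix: as written your losses add to $3\varepsilon/2$ rather than $\varepsilon$, which you can repair by taking, e.g., $E$ within $m\varepsilon/4$ of $a_m$ and $\log(1-\delta)\ge -m\varepsilon/4$.
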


\begin{proof}
Take some $\varepsilon>0$ and assume that $\overline d_{\infKL}<\infty$. Just for notational convenience, recall we let $a_m:=\infKL(Q^m, \Pcal^m)$. Necessarily by definition of $\limsup$ and the fact that $d_{\infKL}<\infty$, there exists some integer block length $m\in\mathbb N$ such that $a_m<\infty$ and that
\begin{equation}\label{eq:choose-m-close}
\frac{1}{m}\infKL(Q^m, \Pcal^m)\ge \overline d_{\infKL}-\frac{\varepsilon}{2}.
\end{equation}

The first thing we must do is choose a single $m$-sized block $e$-variable with a near optimal e-power. By Proposition~\ref{prop:infkl-e-power} at horizon $m$, $a_m=\sup_{E\in(\Pcal^m)^\circ}\E_{Q^m}[\log E]$. Therefore, there exists some $E^{\star}:\X^m\to(0,\infty)$ with $E^{\star}\in(\Pcal^m)^\circ$ such that
\begin{equation}\label{eq:E-star-near-opt}
\E_{Q^m}[\log E^\star]\ge a_m-\frac{\varepsilon m}{2}.    
\end{equation}

Note here that we are just using the fact that a supremum can be approximated arbitrarily closely. We now need to build an $e$-process by repeating this same $m$-block e variable. To this end, we will partition the coordinates into consecutive blocks of length $m$. Then, block $k$ consists of $(X_{(k-1)m+1},\dots,X_{km})$. And, define $t_k:=km$, then $\mathcal F_{t_k}:=\sigma(X_1,\dots,X_{t_k})$ is $\bbF_{\bt}$. Now define $E_{t_0}:=1$ and for $k\ge 1$,
\[
E_{t_k}:=\prod_{j=1}^k E^\star\bigl(X_{(j-1)m+1},\dots,X_{jm}\bigr).
\]
We claim this satisfies the definition of blockwise $e$-process for any $P\in\Pcal$. Because of the fact that $P^\infty=P^{\otimes\mathbb N}$ is iid, it must be the case that the $k$-th block is independent of $\mathcal F_{t_{k-1}}$ and distributed as $P^m$. Hence for all $k\ge 1$ we have that
\begin{align*}
\E_{P^\infty}[E_{t_k}\mid\mathcal F_{t_{k-1}}]&=\E_{P^\infty}\left[ E_{t_{k-1}}\cdot E^\star\bigl(X_{(k-1)m+1},\dots,X_{km}\bigr)\middle|\mathcal F_{t_{k-1}}\right]\\
&=E_{t_{k-1}}\cdot \E_{P^m}[E^\star].
\end{align*}

Now, note that $E^\star\in(\Pcal^m)^\circ$. As such, it is clear that we have by definition that $\sup_{P\in\mathcal P}\E_{P^m}[E^\star]\le 1$. So in particular, $\E_{P^m}[E^\star]\le 1$. Therefore we get that
\[
\E_{P^\infty}[E_{t_k}\mid\mathcal F_{t_{k-1}}]\le E_{t_{k-1}}.
\]
Therefore, clearly, $(E_{t_k})_{k\ge 0}$ is a test supermartingale for $\Pcal$, i.e.\ it satisfies Definition~\ref{def:test-supermartingale}. Now, we're going to compute the expected log growth rate under $Q^\infty$. Under $Q^\infty=Q^{\otimes\mathbb N}$, we know that the blocks are iid with law $Q^m$. Given this, for $j\ge 1$ define the iid random variables
\[
Y_j:=\log E^\star\bigl(X_{(j-1)m+1},\dots,X_{jm}\bigr).
\]
So, $\log E_{t_k}=\sum_{j=1}^k Y_j$ and $\E_{Q^\infty}[\log E_{t_k}]=\sum_{j=1}^k \E_{Q^\infty}[Y_j] = k\E_{Q^m}[\log E^\star]$. If we divide by $t_k=km$ we get that
\begin{equation}\label{eq:expected-rate}
\frac{1}{t_k}\E_{Q^\infty}[\log E_{t_k}]=\frac{1}{m}\E_{Q^m}[\log E^\star].   
\end{equation}

Now, then applying \eqref{eq:E-star-near-opt} followed by \eqref{eq:choose-m-close} we get that
\[
\frac{1}{m}\E_{Q^m}[\log E^\star] \ge \frac{a_m}{m}-\frac{\varepsilon}{2} \ge \overline d_{\infKL}-\varepsilon.
\]
Taken in conjunction with \eqref{eq:expected-rate}, this proves
\[
\lim_{k\to\infty}\frac{1}{t_k}\E_{Q^\infty}[\log E_{t_k}]\ge \overline d_{\infKL}-\varepsilon.
\]

It remains to now prove the almost sure convergence of the realized log growth rate. Because $a_m<\infty$ and $E^\star\in(\Pcal^m)^\circ$, Proposition~\ref{prop:infkl-e-power} entails $\E_{Q^m}[\log E^\star]\le a_m<\infty$. Therefore, taking this with \eqref{eq:E-star-near-opt}, $\E_{Q^m}[\log E^\star]\in\mathbb R$. So, $Y_1=\log E^\star(X_{1:m})$ is integrable. Meaning, $\E_{Q^m}[|Y_1|]<\infty$. By the strong law of large numbers applied to the iid sequence $(Y_j)$ we get that almost surely under $Q^\infty$,
\[
\frac{1}{k}\sum_{j=1}^k Y_j\to\E_{Q^m}[Y_1]=\E_{Q^m}[\log E^\star].
\]

Recall that $t_k=km$. Dividing by $m$ proves that almost surely under $Q^\infty$,
\[
\frac{1}{t_k}\log E_{t_k} \to \frac{1}{m}\E_{Q^m}[\log E^\star].
\]
Therefore, alongside \eqref{eq:expected-rate}, we have shown that the almost sure limit indeed equals the expected log limit, proving the claim.
\end{proof}

We can summarize the intrinsic maximal expected-log growth with the following theorem. Since each $W\in\Wcal(\Pcal)$ actually lies in $\mathcal E_{\bbF_{\bt}}(\Pcal)\text{ for some }\bt\in\Tdet$, we define its point-alternative expected log-growth functional as
\[
\mathcal R_Q(W):= \limsup_{k\to\infty}\frac{1}{t_k}\E_{Q^\infty}[\log W_{t_k}],  \text{ where } \bt\in\Tdet \text{ is such that } W\in\mathcal E_{\bbF_{\bt}}(\Pcal).
\]
Recall that here, $\bt$ is the unique block schedule that is associated with $W$.

\begin{theorem}\label{thm:max-eprocess-rate}
Let $\overline d_{\infKL}:=\limsup_{n\to\infty}\frac{1}{n}\infKL(Q^n,\Pcal^n) =\limsup_{n\to\infty}\frac{1}{n}a_n\in[0,\infty]$. Then it follows that
\[
\boxed{
\sup_{W\in\Wcal(\Pcal)}\mathcal R_Q(W)=\ \overline d_{\infKL}.
}
\]
\end{theorem}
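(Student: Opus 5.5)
The proof splits into an upper bound, from Lemma~\ref{lem:eprocess-upper-by-infkl}, and a matching lower bound, from Proposition~\ref{prop:achieve-infkl-rate}. Lemma~\ref{lem:an-superadditive-sec} links them by identifying the $\limsup$ with a genuine limit.

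\emph{Upper bound.} Take any $W\in\Wcal(\Pcal)$, so $W\in\mathcal E_{\bbF_{\bt}}(\Pcal)$ for its associated schedule $\bt$. Lemma~\ref{lem:eprocess-upper-by-infkl} gives $\E_{Q^\infty}[\log W_{t_k}]\le a_{t_k}$ for every $k$. Hence
\[
\mathcal R_Q(W)\le \limsup_{k\to\infty}\frac{a_{t_k}}{t_k}\le \lim_{n\to\infty}\frac{a_n}{n}.
\]
By Lemma~\ref{lem:an-superadditive-sec} and Fekete's lemma, $a_n/n$ converges in $[0,\infty]$ to $\sup_n a_n/n$, so this limit equals $\overline d_{\infKL}$. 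Taking the supremum over $W$ gives $\sup_W\mathcal R_Q(W)\le\overline d_{\infKL}$. This holds also when the value is $+\infty$.

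\emph{Lower bound, finite case.} Suppose $\overline d_{\infKL}<\infty$. For each $\varepsilon>0$, Proposition~\ref{prop:achieve-infkl-rate} supplies $\bt$ and a blockwise test supermartingale $E$ on $\bbF_{\bt}$ with $\lim_k t_k^{-1}\E_{Q^\infty}[\log E_{t_k}]\ge\overline d_{\infKL}-\varepsilon$. By optional stopping, as noted after Definition~\ref{def:test-supermartingale}, $E\in\mathcal E_{\bbF_{\bt}}(\Pcal)\subseteq\Wcal(\Pcal)$. Therefore $\sup_W\mathcal R_Q(W)\ge\overline d_{\infKL}-\varepsilon$, and letting $\varepsilon\downarrow0$ finishes this case.

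\emph{Lower bound, infinite case.} This is the main obstacle, because Proposition~\ref{prop:achieve-infkl-rate} assumes finiteness. Fix any $L<\infty$. Since $\sup_n a_n/n=\infty$, pick $m$ with $a_m/m>2L$. There are two subcases.
\begin{itemize}
\item If $a_m<\infty$, rerun the block construction of Proposition~\ref{prop:achieve-infkl-rate}. Proposition~\ref{prop:infkl-e-power} gives $E^\star\in(\Pcal^m)^\circ$ with $\E_{Q^m}[\log E^\star]\ge mL$. Taking products over blocks of length $m$ gives a blockwise test supermartingale whose rate is exactly $m^{-1}\E_{Q^m}[\log E^\star]\ge L$, by the computation \eqref{eq:expected-rate}. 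That computation needs no finiteness of $\overline d_{\infKL}$; it only needs the sum of expectations to be well defined, which holds since $\E_{Q^m}[\log E^\star]$ is finite or $+\infty$, and in either case the rate is at least $L$.
\item If $a_m=\infty$, the supremum in Proposition~\ref{prop:infkl-e-power} is $+\infty$, so again there is $E^\star\in(\Pcal^m)^\circ$ with $\E_{Q^m}[\log E^\star]\ge mL$, and the same construction applies.
\end{itemize}
One care point: the convention for $\E[\log Z]$ in Definition~\ref{def:kl} takes a supremum over truncations. To keep additivity over blocks unambiguous, I would replace $E^\star$ by $E^\star\wedge M$ for large $M$, which stays in the polar. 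By monotone convergence its expected log still exceeds $mL$, and it now has integrable positive part. Additivity then gives rate at least $L$. Since $L$ was arbitrary, $\sup_W\mathcal R_Q(W)=\infty=\overline d_{\infKL}$.
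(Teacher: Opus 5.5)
Your proof is correct and matches the paper's: the upper bound via Lemma~\ref{lem:eprocess-upper-by-infkl} along the schedule $\bt$, the finite case via Proposition~\ref{prop:achieve-infkl-rate}, and the infinite case via the same repeated-block product of a near-optimal $E^\star\in(\Pcal^m)^\circ$ from Proposition~\ref{prop:infkl-e-power}. Your extras are harmless but not needed: $\limsup_k a_{t_k}/t_k\le\limsup_n a_n/n$ holds along any subsequence without Fekete, and under the paper's convention $\E_{Q^m}[\log E^\star]\ge mL$ already forces an integrable negative part, which is all the blockwise additivity requires, so the truncation $E^\star\wedge M$ can be dropped.
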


\begin{proof}
We will start with the upper bound. Suppose that $W\in\Wcal(\Pcal)$. If $W\in\mathcal E_{\bbF}(\Pcal)$, we can apply Lemma~\ref{lem:eprocess-upper-by-infkl} with $t_k=k$. If $W\in\mathcal E_{\bbF_{\bt}}(\Pcal)$ for some $\bt=(t_k)_{k\ge 0}$, we can apply Lemma~\ref{lem:eprocess-upper-by-infkl} using this $\bt$. In either case it follows that
\[
R_Q(W)\le \limsup_{n\to\infty}\frac1n a_n = \overline d_{\infKL}.
\]
If we take the supremum over all $W\in\Wcal(\Pcal)$, that proves the upper bound. We will now prove the \underline{lower bound,} i.e.\ the achievability. We are first going to handle the case where $\overline d_{\infKL}=+\infty$. To this end, let $M>0$ be arbitrarily chosen. By definition of $\limsup$, there exists $m\in\mathbb N$ such that $a_m=\infKL(Q^m, \Pcal^m)\ge mM+1$. Note that this inequality is understood to be in $[0,\infty]$. We emphasize this because it allows the bipolar $a_m=+\infty$ in particular. Proposition~\ref{prop:infkl-e-power} yields $a_m=\sup_{E\in(\Pcal^m)^\circ}\E_{Q^m}[\log E]$. So there exists an $e$-variable $E^\star\in(\Pcal^m)^\circ$ such that $\E_{Q^m}[\log E^\star]\ge mM$. Let us now partition the data into consecutive blocks of length $m$ and set $t_k:=km$. Then, define
\[
W_{t_k}:=\prod_{j=1}^k E^\star\bigl(X_{(j-1)m+1},\dots,X_{jm}\bigr).
\]

We have that $(W_{t_k})_{k\ge 0}$ is a test blockwise test supermartingale along $\bbF_{\bt}$. Therefore, $W\in\Wcal(\Pcal)$. Thus under $Q^\infty$ by independence of the blocks we get
\[
\frac{1}{t_k}\E_{Q^\infty}[\log W_{t_k}] =\frac{1}{m}\E_{Q^m}[\log E^\star] \ge M.
\]
Notice how $M>0$ was arbitrary. Hence, the supremum of achievable expected log growth rates is indeed $+\infty$. Now, in the case where $\overline d_{\infKL}<\infty$, Proposition~\ref{prop:achieve-infkl-rate} already gives us for every $\varepsilon>0$, a blockwise test supermartingale $W\in\Wcal(\Pcal)$ such that
\[
R_Q(W)=\lim_{k\to\infty}\frac{1}{t_k}\E_{Q^\infty}[\log W_{t_k}] \ge \overline d_{\infKL}-\varepsilon.
\]
Then, letting $\varepsilon\downarrow0$ proves the lower bound, completing the proof.
\end{proof}

Therefore, Theorem~\ref{thm:max-eprocess-rate} tells us that unfortunately the $\KLinf(Q,\Pcal)$ isn't the intrinsic maximal expected log growth rate of $e$-processes in general unfortunately. The right maximal rate is in fact the asymptotic $\infKL$ rate $\overline d_{\infKL}=\limsup_n \frac1n\infKL(Q^n,\Pcal^n)$. Meaning, $\overline d_{\infKL}$ is always achievable arbitrarily closely and it upper bounds all $e$-processes. 
As we will see in our Proposition~\ref{prop:counterexample}, we have that $\infKL(Q^n, \Pcal^n)=0$ for all $n$, so $\overline d_{\infKL}=0$. Hence the above upper bound that we showed implies that no $e$-process (in the most general sense) can have positive asymptotic expected log growth under $Q^\infty$. And here we see the constant process $E_{t_k}\equiv 1$ achieves rate $0$. So we can have cases where the $\KLinf(Q,\Pcal)>0$ while the maximum achievable $e$-process rate is $0$.


\begin{remark}\label{rem:blockwise-versus-full-point-alternative}
Now, note that the supremum over wealth processes will be equal to the supremum over $e$-processes on the original filtration $\bbF$. In particular, more generally, our zeroing argument will work for any subfiltration $\bbG\preceq\bbF$. Suppose that $W=(W_{t_k})_{k\ge 0}\in\mathcal E_{\bbG_{\bt}}(\Pcal)$ is a blockwise $e$-process for some $\bt=(t_k)_{k\ge 0}\in\Tdet$. Define the zeroed process $\widehat W=(\widehat W_n)_{n\ge 0}$ by
\[
\widehat W_n:=
\begin{cases}
W_{t_k},&n=t_k\,\text{ for some $k\ge 0$},\\
0,&n\notin\{t_k:k\ge 0\}.
\end{cases}
\]
It follows then that $\widehat W\in\mathcal E_{\bbG}(\Pcal)$. Indeed, suppose that $\tau$ is any finite valued $\bbG$-stopping time. Then, the ceiling time $\tau^+:=\min\{t_k:t_k\ge \tau\}$ will again be a $\bbG_{\bt}$-stopping rule. So, we will have $\widehat W_{\tau}\le W_{\tau^+}$ pointwise. Therefore, for every $P\in\Pcal$,
\[
\E_{P^\infty}[\widehat W_\tau]\le \E_{P^\infty}[W_{\tau^+}]\le 1.
\]
Thus, through zeroing, we can embed every blockwise $e$-process on $\bbG_{\bt}$ into an $e$-process on $\bbG$. In particular, with $\bbG=\bbF$ and our convention that takes $\log 0 = -\infty$,
\[
\limsup_{n\to\infty}\frac{1}{n}\E_{Q^\infty}[\log \widehat W_n] = \limsup_{k\to\infty}\frac{1}{t_k}\E_{Q^\infty}[\log W_{t_k}].
\]
Now since every $e$-process on $\bbG$ is already a wealth process on $\bbG$, this proves that
\[
\sup_{W\in\Wcal_{\bbG}(\Pcal)}\mathcal R_Q(W) = \sup_{E\in\mathcal E_{\bbG}(\Pcal)} \limsup_{n\to\infty}\frac{1}{n}\E_{Q^\infty}[\log E_n].
\]
In particular, for the original filtration we have
\[
\boxed{
\sup_{W\in\Wcal(\Pcal)}\mathcal R_Q(W) = \sup_{E\in\mathcal E(\Pcal)} \limsup_{n\to\infty}\frac{1}{n}\E_{Q^\infty}[\log E_n].
}
\]
\end{remark}

One other idea would be to extend the blockwise $e$-process with the left-constant interpolation of $\widehat W_n:= W_{t_k}$ whenever $t_k\le n<t_{k+1}$. Unfortunately, this will fail to be an $e$-process on $\bbF$. The issue is that a $\bbF$-stopping time can use additional information that is revealed inside the current block and then decide to stop immediately or (strategically) wait until the next block time. We will show a situation where this happens in the following example.

\begin{example}\label{ex:counterzero-versus-constant-eproc}
Let $\X=\{0,1\}$. Let $P\sim\mathrm{Bern}(1/2)$. Let the block times be $t_0=0, t_1=2, t_2=4$ respectively. Now define $W_0:=1$, $W_2:=1$, and $W_4:=2\mathbf{1}_{\{\X_{3}=1\}}$. We claim that $(W_{t_k})$ is an $e$-process along $\{0,2,4\}$: a block-wise $e$-process. To see this, suppose $\sigma$ is any stopping rule taking values in $\{0,2,4\}$. Then, $\{\sigma=4\}=\{\sigma>2\}\in\mathcal F_2$. Now, $W_4$ depends only on $\X_3$, which is independent of $\mathcal F_2$. Further, $\E_P[W_4]=1$. Therefore, we get
\begin{align*}
\E_P[W_{\sigma}]&=\E_P[\mathbf{1}_{\{\sigma=0\}}W_0+\mathbf{1}_{\{\sigma=2\}}W_2+\mathbf{1}_{\{\sigma=4\}}W_4]\\
&= P(\sigma=0) + P(\sigma=2) + \E_P[\mathbf{1}_{\{\sigma=4\}}W_4]\\
&= P(\sigma=0)+P(\sigma=2)+P(\sigma=4)\E_P[W_4]\\
&= 1.
\end{align*}
Therefore, it follows that the block-wise $e$-property holds. In particular, $(W_{t_k})$ is an $e$-process on the reduced filtration. Let us now form the left-constant interpolation $\widehat W_n:=W_{t_k}$ for $t_k\le n<t_{k+1}$. Then, $\widehat W_3=W_2=1$ and $\widehat W_4=W_4=2\mathbf{1}_{\{X_3=1\}}$. Now consider the $\bbF$-stopping time
\[
\tau:=
\begin{cases}
3,&\X_3=0,\\
4,&\X_3=1.
\end{cases}
\]
This will be an $(\mathcal F_n)$-stopping rule. In addition,
\[
\widehat W_{\tau}=\mathbf{1}_{\{\X_3=0\}}\widehat W_3 + \mathbf{1}_{\{\X_3=1\}}\widehat W_4 = \mathbf{1}_{\{\X_3=0\}} + 2\mathbf{1}_{\{\X_3=1\}}.
\]
Therefore,
\[
\E_P[\widehat W_{\tau}]=\frac12 \cdot 1 + \frac12 \cdot 2 = \frac32 > 1.
\]
It follows that $\widehat W$ cannot be a $e$-process on the original filtration $\bbF$.
\end{example}
In Example~\ref{ex:counterzero-versus-constant-eproc}, we are viewing what occurs if one ``peeks'' into the next block. Upon seeing that $X_3=1$, one can wait until time $4$ to capitalize on the larger $W_4$. However, if $X_3=0$, one can stop immediately at time $3$, keeping the previous capital $W_2$. The blockwise $e$-process definition does not allow such peeking into the block and then choosing. This is why our embedding of the $e$-processes on $\bbF$ is doing so with zero off the block times, instead of holding it constant between the times. 

\subsection{A necessary condition for sequential testability}
Suppose that $\Pcal$ is a class of probability measures on some measurable space and that $Q$ is any probability measure. In particular, we will allow $Q\in\Pcal$, which will correspond to $\KLinf(Q,\Pcal)=0$. 
For each $n$, we let $\Pcal^n:\{P^n:P\in\Pcal\}$ will be the corresponding i.i.d.\ product class on $n$ samples, and $(\Pcal^n)^{\circ\circ}$ is its bipolar. We recall a lemma that gives intuition on the event probabilities over the bipolar, proved in \parencite{ram2026powersequentialtestsexist}. 

\begin{lemma}\label{lem:event-sup}
Suppose that $\mathcal{S}$ is a nonempty set of probability measures on $(\Omega, \mathcal{F})$ and let $A\in\mathcal{F}$. Then it follows that
\[
\sup_{R\in\mathcal{S}^{\circ\circ}} R(A)=\sup_{P\in\mathcal S}P(A).
\]
\end{lemma}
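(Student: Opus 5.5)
The statement is Lemma~\ref{lem:event-sup}: for an event $A$, $\sup_{R\in\mathcal S^{\circ\circ}}R(A)=\sup_{P\in\mathcal S}P(A)$. I will prove two inequalities. The inequality ``$\ge$'' is immediate from $\mathcal S\subseteq\mathcal S^{\circ\circ}$, which was noted after Definition~\ref{def:bipolar}. So the content lies in ``$\le$''.

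For ``$\le$'', set $c:=\sup_{P\in\mathcal S}P(A)\in[0,1]$ and split into cases.

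If $c=0$, consider the indicator-type variable $E:=\infty\cdot\one_A$, with the convention $0\cdot\infty=0$. Alternatively, to avoid infinite values, use $E_M:=M\one_A$ for each $M>0$. Each $E_M$ lies in $\mathcal S^\circ$, since $\E_P[E_M]=M P(A)=0\le 1$ for every $P\in\mathcal S$. Hence every $R\in\mathcal S^{\circ\circ}$ satisfies $M R(A)\le 1$ for all $M$, which forces $R(A)=0$.

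If $c>0$, define $E:=\one_A/c$, a measurable map $\Omega\to[0,\infty]$. Then
\[
\sup_{P\in\mathcal S}\E_P[E]=\frac{1}{c}\sup_{P\in\mathcal S}P(A)=1,
\]
so $E\in\mathcal S^{\circ}$. By the definition of the bipolar, any $R\in\mathcal S^{\circ\circ}$ satisfies $\E_R[E]=R(A)/c\le 1$, that is, $R(A)\le c$. Taking the supremum over $R$ gives the claim.

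There is no real obstacle here. The only care needed is the degenerate case $c=0$, where one cannot divide by $c$, and the scaling argument above handles it. One should also check that $\mathcal S^{\circ}$ imposes no integrability or finiteness restrictions beyond those used, and Definition~\ref{def:bipolar} indeed allows arbitrary nonnegative measurable $E$.
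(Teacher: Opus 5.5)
Your proof is correct: the inclusion $\mathcal S\subseteq\mathcal S^{\circ\circ}$ gives ``$\ge$'', and testing each $R\in\mathcal S^{\circ\circ}$ against $\one_A/c$ when $c>0$, or against $M\one_A$ for every $M$ when $c=0$, gives ``$\le$''. The paper does not prove this lemma itself but imports it from earlier work, and your scaled-indicator argument is the standard proof of this fact.
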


Now with this defined, we will now present a lemma that tells us that if $Q^n\in(\Pcal^n)^{\circ\circ}$ for all $n$, then power one tests are in fact impossible.

\begin{lemma}\label{lem:no-power-one-if-in-bipolar}
Assume that for every $n\in\mathbb N$ we have that $Q^n\in(\Pcal^n)^{\circ\circ}$ on $(\X^n,\B^{\otimes n})$. Then for every $\alpha\in(0,1)$ and every stopping time $\tau$, $\sup_{P\in\Pcal}P^\infty(\tau<\infty)\le\alpha$ implies that $Q^\infty(\tau<\infty)\le \alpha$. 
Said differently, if a power-one test exists for $Q$ against $\Pcal$, then it must be the case that for some $k \in \mathbb N$, $Q^k\notin(\Pcal^k)^{\circ\circ}$.
\end{lemma}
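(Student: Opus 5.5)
The plan is to reduce the sequential statement to finitely many fixed-horizon events and then apply Lemma~\ref{lem:event-sup} at each horizon. Fix $\alpha\in(0,1)$ and an $\bbF$-stopping time $\tau$ with $\sup_{P\in\Pcal}P^\infty(\tau<\infty)\le\alpha$. For each $n\in\mathbb N$, the event $\{\tau\le n\}$ lies in $\mathcal F_n=\sigma(X_1,\dots,X_n)$. By the Doob--Dynkin factorization, it is therefore of the form $\{(X_1,\dots,X_n)\in A_n\}$ for some $A_n\in\B^{\otimes n}$. Under any product law $M^\infty$, its probability equals $M^n(A_n)$; in particular $P^\infty(\tau\le n)=P^n(A_n)$ and $Q^\infty(\tau\le n)=Q^n(A_n)$.

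Next we use bipolar membership. Since $Q^n\in(\Pcal^n)^{\circ\circ}$, Lemma~\ref{lem:event-sup} applied with $\mathcal S=\Pcal^n$ on $(\X^n,\B^{\otimes n})$ gives
\[
Q^n(A_n)\le \sup_{R\in(\Pcal^n)^{\circ\circ}}R(A_n)=\sup_{P\in\Pcal}P^n(A_n)=\sup_{P\in\Pcal}P^\infty(\tau\le n)\le \sup_{P\in\Pcal}P^\infty(\tau<\infty)\le\alpha .
\]
A more direct route avoids the lemma: the indicator $E:=\one_{A_n}/\alpha$ lies in $(\Pcal^n)^\circ$, so $\E_{Q^n}[E]\le1$ because $Q^n$ is in the bipolar. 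Either way, $Q^\infty(\tau\le n)\le\alpha$ for every $n$.

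Finally, we let $n\to\infty$. The events $\{\tau\le n\}$ increase to $\{\tau<\infty\}$, so continuity from below gives $Q^\infty(\tau<\infty)=\lim_n Q^\infty(\tau\le n)\le\alpha$. For the ``said differently'' part, a level-$\alpha$ power-one test is a stopping time with $Q^\infty(\tau<\infty)=1>\alpha$. This contradicts what we just proved, so the hypothesis must fail, i.e.\ $Q^k\notin(\Pcal^k)^{\circ\circ}$ for some $k$.

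The only point needing care is the reduction from $\mathcal F_n$-events on $\X^{\mathbb N}$ to sets $A_n$ in $\X^n$, together with identifying the marginals of $P^\infty$ and $Q^\infty$ on the first $n$ coordinates as $P^n$ and $Q^n$. Both facts are standard for coordinate processes. The argument also passes through every finite horizon, so it does not require $\tau$ to be bounded.
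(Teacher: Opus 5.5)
Your proof is correct and is essentially the paper's argument: reduce to the $\mathcal F_n$-events $\{\tau\le n\}$, use $Q^n\in(\Pcal^n)^{\circ\circ}$ with Lemma~\ref{lem:event-sup} to get $Q^\infty(\tau\le n)\le\alpha$, and conclude by continuity from below. Your alternative route through the polar element $\one_{A_n}/\alpha$ is also valid; it simply inlines the half of Lemma~\ref{lem:event-sup} that the argument actually uses.
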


\begin{proof}
By definition for each $n$, the event $\{\tau\le n\}\in\mathcal F_n$ depends only on the first $n$ samples. Hence $Q^\infty(\tau\le n)=Q^n(\tau\le n)$. Now since $Q^n\in(\Pcal^n)^{\circ\circ}$, Lemma~\ref{lem:event-sup} gives
\[
Q^n(\tau\le n)\le \sup_{P\in\Pcal}P^n(\tau\le n)\le \sup_{P\in\Pcal}P^\infty(\tau<\infty)\le \alpha.
\]
Now taking the $\sup_n$ gives us by continuity of measure that $Q^\infty(\tau<\infty)=\sup_n Q^\infty(\tau\le n)\le \alpha$.
\end{proof}

Interestingly, non-membership in the bipolar will lead to a $\KL$ gap that is strictly positive. We will formalize this idea in Proposition~\ref{prop:notinbipolar-positive}. That is, we will show that for every nonempty $\mathcal S$ and $Q\notin S^{\circ\circ}$, $\inf_{R\in\mathcal S^{\circ\circ}}\KL(Q\|R)>0$. However, for us to better understand this, we will defer this proof until we prove that the bipolar is closed under the total variation limits in Lemma~\ref{lem:tv-bipolar}.   

\begin{proposition}\label{prop:powerone-implies-out}
Take some $\alpha\in(0,1)$ and let $\tau$ be an $(\mathcal F_n)$-stopping time. Assume that $\sup_{P\in\Pcal}P^\infty(\tau<\infty)\le \alpha$ and that $Q^\infty(\tau<\infty)=1$. Then, it must follow that there exists a time $N<\infty$ such that for all $n\ge N$, $Q^n\notin\bPn$. 
\end{proposition}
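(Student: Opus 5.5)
The plan is to argue by the same event-probability mechanism as in Lemma~\ref{lem:no-power-one-if-in-bipolar}, but now using the quantitative hypothesis $Q^\infty(\tau<\infty)=1$ to pin down a finite time after which membership in the bipolar is impossible. The key observation is that membership $Q^n\in\bPn$ at a \emph{single} horizon $n$ already caps $Q^n(\tau\le n)$ at $\alpha$, by Lemma~\ref{lem:event-sup} applied with $\mathcal S=\Pcal^n$ and the event $A=\{\tau\le n\}\in\mathcal F_n$ (viewed as a subset of $\X^n$).

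Concretely, first I would note that $\{\tau\le n\}$ depends only on $X_1,\dots,X_n$, so $Q^\infty(\tau\le n)=Q^n(\tau\le n)$ and, for every $P\in\Pcal$, $P^\infty(\tau\le n)=P^n(\tau\le n)\le P^\infty(\tau<\infty)\le\alpha$. Second, by continuity from below and $Q^\infty(\tau<\infty)=1$, we have $Q^\infty(\tau\le n)\uparrow 1$, so since $\alpha<1$ there is $N<\infty$ with $Q^\infty(\tau\le N)>\alpha$. Third, because $\{\tau\le n\}$ is increasing in $n$, the same strict inequality $Q^n(\tau\le n)>\alpha$ holds for every $n\ge N$.

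Finally, fix $n\ge N$ and suppose for contradiction that $Q^n\in\bPn$. Lemma~\ref{lem:event-sup} gives $Q^n(\tau\le n)\le\sup_{R\in\bPn}R(\tau\le n)=\sup_{P\in\Pcal}P^n(\tau\le n)\le\alpha$, contradicting the previous step. Hence $Q^n\notin\bPn$ for all $n\ge N$.

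There is essentially no hard step; the only point requiring care is the identification of $\{\tau\le n\}$ with a measurable subset of $(\X^n,\B^{\otimes n})$ so that Lemma~\ref{lem:event-sup} applies on the $n$-fold product space, and the monotonicity in $n$ that upgrades ``some $n$'' to ``all $n\ge N$''.
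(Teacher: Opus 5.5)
Your proposal is correct and follows the paper's proof essentially step for step. You use the increasing events $\{\tau\le n\}$, continuity from below (with $\alpha<1$) to find $N$, monotonicity to cover all $n\ge N$, and Lemma~\ref{lem:event-sup} with $\mathcal S=\Pcal^n$ to reach the contradiction.
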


\begin{proof}
For each $n\in\mathbb N$, let us define the event $B_n:=\{\tau\le n\}\in\mathcal F_n$. Note that $(B_n)_{n\ge 1}$ is increasing and that $\bigcup_{n\ge 1}B_n=\{\tau<\infty\}$. As a result, we have
\begin{equation}\label{eq:Bn-to-1}
Q^\infty(B_n)=Q^\infty(\tau\le n)\uparrow Q^\infty(\tau<\infty)=1.    
\end{equation}
That means there exists some $N$ such that for all $n\ge N$,
\begin{equation}\label{eq:QBn>alpha}
Q^\infty(B_n)>\alpha.   
\end{equation}
Having done this, take some $n\ge 1$ and note that for every $P\in\Pcal$, we have that $P^\infty(B_n)=P^\infty(\tau\le n)\le P^\infty(\tau<\infty)\le \alpha$. By definition $B_n\in\mathcal F_n$ and hence depends only on $(X_1,\dots,X_n)$. Therefore, we have that $P^\infty(B_n)=P^n(B_n)$. Therefore,
\begin{equation}\label{eq:supP-level}
\sup_{P\in\Pcal}P^n(B_n)\le \alpha.    
\end{equation}

Assume now for the sake of contradiction that for some $n\ge N$ we have that $Q^n\in\bPn$. By definition of $\bPn$ and $B_n\in\B^{\otimes n}$, we can use Lemma~\ref{lem:event-sup} with $\mathcal S=\Pcal^n$. Therefore, $\sup_{R\in\bPn} R(B_n)=\sup_{P\in\Pcal} P^n(B_n)$. In particular because $Q^n\in\bPn$ by assumption, we have that
\[
Q^n(B_n)\le \sup_{R\in\bPn} R(B_n)=\sup_{P\in\Pcal} P^n(B_n)\le \alpha.
\]
Note that in the last inequality, we just used \eqref{eq:supP-level}. However, we just argued that $Q^n(B_n)=Q^\infty(B_n)>\alpha$ by \eqref{eq:QBn>alpha}. This is a clear contradiction. Hence it certainly must be the case that $Q^n\notin\bPn$ for all $n\ge N$. This completes the proof.
\end{proof}

\subsection{Necessary and Sufficient Conditions for Sequential Testability}

Our next corollary will show how non-membership in the bipolar actually propagates forward in $n$. However, to get there and make the proof more straightforward, we will first present a lemma to show that bipolar membership is preserved under marginalization.

\begin{lemma}\label{lem:bipolar-marginal}
Let $1\le n\le k$ and let $\pi_{1:n}:\X^k\to \X^n$ be the projection onto the first $n$ coordinates. If $R\in(\Pcal^k)^{\circ\circ}$, then the marginal
\(
R_{1:n}:=R\circ \pi_{1:n}^{-1}\in(\Pcal^n)^{\circ\circ}.
\)
\end{lemma}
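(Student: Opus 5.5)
To prove Lemma~\ref{lem:bipolar-marginal}, I will argue directly from Definition~\ref{def:bipolar} by a pullback of $e$-variables. First I check that $R_{1:n}$ is a probability measure on $(\X^n,\B^{\otimes n})$. This holds because $\pi_{1:n}$ is measurable from $(\X^k,\B^{\otimes k})$ to $(\X^n,\B^{\otimes n})$, so the pushforward of a probability measure is again a probability measure. It then remains to show that $\E_{R_{1:n}}[E]\le 1$ for every $E\in(\Pcal^n)^\circ$.

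Fix $E\in(\Pcal^n)^\circ$, so $E:\X^n\to[0,\infty]$ is measurable and $\sup_{P\in\Pcal}\E_{P^n}[E]\le 1$. I define the lifted function $\tilde E:=E\circ\pi_{1:n}:\X^k\to[0,\infty]$, which is nonnegative and $\B^{\otimes k}$-measurable. The key observation is that the marginal of $P^k$ on the first $n$ coordinates is exactly $P^n$, i.e.\ $P^k\circ\pi_{1:n}^{-1}=P^n$, because $P^k$ is a product measure. The change-of-variables formula for pushforwards, valid for nonnegative measurable functions with values in $[0,\infty]$, then gives
\[
\E_{P^k}[\tilde E]=\E_{P^k\circ\pi_{1:n}^{-1}}[E]=\E_{P^n}[E]\le 1
\]
for every $P\in\Pcal$. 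Hence $\tilde E\in(\Pcal^k)^\circ$.

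Since $R\in(\Pcal^k)^{\circ\circ}$, we get $\E_R[\tilde E]\le 1$. Applying the same change of variables again,
\[
\E_{R_{1:n}}[E]=\E_R[E\circ\pi_{1:n}]=\E_R[\tilde E]\le 1.
\]
Because $E\in(\Pcal^n)^\circ$ was arbitrary, this shows $R_{1:n}\in(\Pcal^n)^{\circ\circ}$.

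I do not expect a genuine obstacle here. The only point needing care is the identification $P^k\circ\pi_{1:n}^{-1}=P^n$. This holds on measurable rectangles $A_1\times\cdots\times A_n$, since then $\pi_{1:n}^{-1}$ of the rectangle is $A_1\times\cdots\times A_n\times\X^{k-n}$, and it extends to all of $\B^{\otimes n}$ by the $\pi$--$\lambda$ theorem. The integral identity for $[0,\infty]$-valued functions follows from monotone convergence, so infinite values of $E$ cause no difficulty.
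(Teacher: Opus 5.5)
Your proof is correct and follows the paper's argument exactly. You lift $E\in(\Pcal^n)^\circ$ to $\widetilde E=E\circ\pi_{1:n}$, use that the first-$n$ marginal of $P^k$ is $P^n$ to get $\widetilde E\in(\Pcal^k)^\circ$, and transfer $\E_R[\widetilde E]\le 1$ to $\E_{R_{1:n}}[E]\le 1$; your remarks on measurability, the $\pi$--$\lambda$ identification of the marginal, and $[0,\infty]$-valued integrands just make explicit what the paper leaves implicit.
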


\begin{proof}
Consider some $E\in(\Pcal^n)^\circ$. Define the map $\widetilde E:\X^k\to[0,\infty]$ by $\widetilde E(x_1,\dots,x_k):=E(x_1,\dots,x_n)$. If we use the product form $P^k=P^n\otimes P^{k-n}$ we have that for any $P\in\Pcal$, $\E_{P^k}[\widetilde E]=\E_{P^n}[E]\le 1$. By Definition~\ref{def:bipolar} we have that $\widetilde E\in(\Pcal^k)^\circ$. Since by assumption $R\in(\Pcal^k)^{\circ\circ}$, $\E_R[\widetilde E]\le 1$. But notice how $\E_R[\widetilde E]=\E_{R_{1:n}}[E]$ by definition of the marginal. Thus, $\E_{R_{1:n}}[E]\le 1$ for every $E\in(\Pcal^n)^\circ$, which is exactly in fact the condition that $R_{1:n}\in(\Pcal^n)^{\circ\circ}$, completing the proof.     
\end{proof}

Now, with this lemma shown, the proof of the corollary that nonmembership propagates forward in $n$ will be fairly simple.

\begin{corollary}\label{cor:out-propogates}
If $Q^n\not\in\bPn$ for some $n$, then for every $k\ge n$, we have that $Q^k\not\in\bPk$.    
\end{corollary}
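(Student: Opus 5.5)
The plan is to argue by contraposition, using Lemma~\ref{lem:bipolar-marginal} as the only real ingredient. Fix $n$ with $Q^n\notin\bPn$ and any $k\ge n$. Suppose, for contradiction, that $Q^k\in\bPk$. Lemma~\ref{lem:bipolar-marginal}, applied with this $n\le k$ and $R=Q^k$, then gives that the marginal $(Q^k)_{1:n}=Q^k\circ\pi_{1:n}^{-1}$ lies in $\bPn$.

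The remaining step is to identify this marginal. Since $Q^k=Q^n\otimes Q^{k-n}$ is a product measure, for every $A\in\B^{\otimes n}$ we have
\[
Q^k\bigl(\pi_{1:n}^{-1}(A)\bigr)=Q^k(A\times\X^{k-n})=Q^n(A)\,Q^{k-n}(\X^{k-n})=Q^n(A).
\]
Hence $(Q^k)_{1:n}=Q^n$, so $Q^n\in\bPn$, which contradicts the hypothesis. Therefore $Q^k\notin\bPk$ for every $k\ge n$.

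No step here is a genuine obstacle. The only point needing care is the measure-theoretic identification of the marginal of a product measure on cylinder sets, and this is immediate from the definition of the product measure. The case $k=n$ is trivial, with $\pi_{1:n}$ the identity.
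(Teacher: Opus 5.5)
Your proof is correct and follows the paper's argument: assume $Q^k\in\bPk$, apply Lemma~\ref{lem:bipolar-marginal} to place the first-$n$ marginal in $\bPn$, and identify that marginal as $Q^n$ to get a contradiction. The only difference is that you verify the marginal identity $Q^k(A\times\X^{k-n})=Q^n(A)$ explicitly, which the paper leaves implicit.
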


\begin{proof}
Take some $k\ge n$. Let us assume to the contrary that $Q^k\in\bPk$. Thanks to Lemma~\ref{lem:bipolar-marginal}, we have that the marginal of $Q^k$ on the first $n$ coordinates belongs to $\bPn$. However, that marginal is exactly $Q^n$. So that would mean that $Q^n\in\bPn$, which contradicts the assumption.    
\end{proof}

\noindent Finally, the last result we will present will specifically show that if we have weak compactness in conjunction with the $\KLinf>0$, that necessarily implies eventual non-membership also.

\begin{proposition}\label{prop:compact-implies-out-eventual}
Assume that $\Pcal$ is weakly compact and that $d:=\KLinf(Q,\Pcal)>0$. Then, there exists some $N<\infty$ such that for all $n\ge N$, $Q^n\notin\bPn$.   
\end{proposition}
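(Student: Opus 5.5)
The plan is to build a power-one sequential test of $\Pcal$ against $Q$ and then invoke Proposition~\ref{prop:powerone-implies-out}. That proposition directly gives an $N$ with $Q^n\notin\bPn$ for all $n\ge N$. Alternatively, one only needs a single $k$ with $Q^k\notin\bPk$, and Corollary~\ref{cor:out-propogates} propagates it forward. So the task reduces to producing, for some $\alpha\in(0,1)$, a stopping time $\tau$ with $\sup_{P\in\Pcal}P^\infty(\tau<\infty)\le\alpha$ and $Q^\infty(\tau<\infty)=1$.

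For the existence of such a test I would rely on weak compactness. The paper cites \cite{ram2026powersequentialtestsexist} for the fact that weak compactness of $\Pcal$ yields a power-one sequential test against every $Q$ with $\KLinf(Q,\Pcal)>0$. If I may quote that result, the proof is two lines.

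To be more self-contained, I would construct the test from a finite-sample separation instead.
\begin{itemize}
\item By lower semicontinuity of $\KL$ and compactness, $\Phi$ is weakly l.s.c.\ at $Q$. Hence there is a weak neighborhood $U$ of $Q$, given by finitely many bounded continuous $f_1,\dots,f_J$ and a tolerance $\eta$, such that $\Phi(R)\ge d/2>0$ for all $R\in U$.
\item In particular $U\cap\Pcal=\varnothing$. Shrinking $U$, compactness of $\Pcal$ lets me cover $\Pcal$ by finitely many weak neighborhoods, each of which is separated from $Q$ by a gap in the mean of some fixed bounded continuous $f$.
\item For each such $f$, a bounded-increment betting martingale (a product of $1+\lambda(f(X_i)-c)$ terms, with $c$ chosen so that $\E_P f\le c$ on the relevant piece of $\Pcal$) is a test supermartingale for that piece. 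Under $Q$ it grows exponentially almost surely by the SLLN.
\item Averaging these finitely many processes with equal weights produces a test supermartingale for all of $\Pcal$ that still diverges $Q^\infty$-a.s.
\item Thresholding at $1/\alpha$ and applying Ville's inequality then gives the required $\tau$.
\end{itemize}

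The main obstacle is the separation step. Disjointness of the compact set $\Pcal$ from the point $Q$ must be converted into finitely many one-dimensional mean gaps, using the fact that the weak topology is generated by $C_b$ functionals. The sign of the gap varies across the cover, so each cover element may need $f$ or $-f$.

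Finally, I would note a shortcut that avoids the test construction entirely. If $Q^n\in\bPn$ held for infinitely many $n$, then by Lemma~\ref{lem:bipolar-marginal} and Corollary~\ref{cor:out-propogates} it would hold for all $n$. Lemma~\ref{lem:no-power-one-if-in-bipolar} would then forbid any power-one test, contradicting the existence of the test above. Either route gives eventual non-membership.
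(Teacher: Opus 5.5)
Your citation route is correct but differs from the paper's. Since $d>0$ forces $Q\notin\Pcal$, the result of \cite{ram2026powersequentialtestsexist} gives a level-$\alpha$ power-one test; this is the paper's Theorem~\ref{thm:two-sufficient-powerone}, second condition, with $\Qcal=\{Q\}$. Proposition~\ref{prop:powerone-implies-out} then yields the $N$.

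The paper instead argues through the rate identity. Weak compactness makes $\Phi$ weakly l.s.c.\ at $Q$, and Theorem~\ref{thm:main} gives $a_n/n\to d$. Hence $a_n\ge dn/2>0$ for $n\ge N$, whereas $Q^n\in\bPn$ would force $a_n=0$. The paper's argument is quantitative, since it shows the KL gap to the bipolar grows linearly. But it rests on the Csisz\'ar--Sanov machinery behind Theorem~\ref{thm:main}, which is stated for $d<\infty$. Yours is only qualitative, but it needs only weak compactness and $Q\notin\Pcal$, so it also covers $d=\infty$ without comment.

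Your self-contained construction, however, has a genuine error in the averaging step. Each betting process $\prod_i\bigl(1+\lambda(f(X_i)-c)\bigr)$ is a supermartingale only under the $P$ in its own piece. Under a $P$ from another piece, its expectation can grow exponentially, so an equal-weight average is not a test supermartingale for $\Pcal$. Averaging is the right operation over alternatives, not over pieces of the null.

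For instance, take $\Pcal=\{\mathrm{Bern}(0.2),\mathrm{Bern}(0.8)\}$, $Q=\mathrm{Bern}(1/2)$ and $f(x)=x$. The component aimed at $\mathrm{Bern}(0.8)$ is $\prod_i(1-\lambda(X_i-0.8))$, which has log-drift $0.6\lambda-O(\lambda^2)>0$ under $\mathrm{Bern}(0.2)$. So the average crosses $1/\alpha$ almost surely under that null.

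The fix is to take the pointwise minimum of the finitely many processes instead. For each $P\in\Pcal$, the minimum is dominated by the supermartingale of a piece containing $P$, so Ville's inequality still gives $P^\infty(\tau<\infty)\le\alpha$. Under $Q$ the minimum still diverges, because every component has log-drift $\lambda\eta-O(\lambda^2)>0$ for small $\lambda$.

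With this change the argument is sound, and it uses less than you invoke. The l.s.c.\ of $\Phi$ and the compactness-based cover are superfluous. The $f_1,\dots,f_J$ defining a basic neighbourhood of $Q$ disjoint from $\Pcal$ already split $\Pcal$ into at most $2J$ pieces with a mean gap $\eta$. So all you need is that $Q$ lies outside the weak closure of $\Pcal$, which holds here because $\Pcal$ is weakly closed and $Q\notin\Pcal$.

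Finally, your closing ``shortcut'' does not avoid the test construction, since it still needs the test for its contradiction. It merely replaces Proposition~\ref{prop:powerone-implies-out} by Lemma~\ref{lem:no-power-one-if-in-bipolar} together with Lemma~\ref{lem:bipolar-marginal}, which is fine.
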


\begin{proof}
Because of the fact that $\Pcal$ is weakly compact by assumption, we can immediately use the fact that \parencite[Lemma~3]{ram2026powersequentialtestsexist} tells us that $\Phi(R):=\inf_{P\in\Pcal}\KL(R\|P)$ is weakly lower semicontinuous everywhere. And in particular, it is at $Q$. Now, since $d=\Phi(Q)\in(0,
\infty)$, thanks to Theorem~\ref{thm:main} we have $\lim_{n\to\infty}\frac{1}{n}a_n=d$. Let us now choose $N$ such that for all $n\ge N$, we have that $a_n/n \ge d/2$. Therefore,
\begin{equation}\label{eq:an-positive}
a_n\ge \frac{d}{2}n>0.    
\end{equation}

\noindent Now consider some $n\ge N$. If $Q^n\in\bPn$, that would mean that $Q^n$ is feasible in the infimum defining $a_n$. Therefore that means that $a_n=\inf_{R\in\bPn}\KL(Q^n\|R)\le \KL(Q^n\|Q^n)=0$. But this contradicts \eqref{eq:an-positive}. So, $Q^n\notin\bPn$ for all $n\ge N$.
\end{proof}

All that we have developed so far leads to this theorem on necessary and sufficient characterizations of existence of power-one sequential tests for arbitrary composite nulls versus point alternatives.

\begin{theorem}\label{thm:powerone-characterization}
The following statements are equivalent.
\begin{enumerate}
    \item For every $\alpha\in(0,1)$, there exists a level-$\alpha$ sequential test of $\Pcal$ against $Q$ with power one.
    \item There exists some $k\in\mathbb N$ such that $Q^k \notin (\Pcal^k)^{\circ\circ}$.
    \item There exists some $k\in\mathbb N$ such that $a_k(Q)>0$.
    \item $\lim_{n\to\infty}\frac{1}{n}\inf_{R\in(\Pcal^n)^{\circ\circ}}\KL(Q^n\|R)>0$.
\end{enumerate}
And moreover, if these statements hold and $k$ is as in statement \textcolor{red}{(2)} above, for every $n\ge k$, $Q^n\notin (\Pcal^n)^{\circ\circ}$ and $a_n(Q)>0$.
\end{theorem}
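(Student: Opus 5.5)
I will prove the cycle (1)$\Rightarrow$(2)$\Rightarrow$(3)$\Rightarrow$(4)$\Rightarrow$(1), and then read off the ``moreover'' clause.

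\emph{(1)$\Rightarrow$(2).} Fix any $\alpha\in(0,1)$ and take the level-$\alpha$ power-one stopping time $\tau$ given by (1). Proposition~\ref{prop:powerone-implies-out} then yields $N$ with $Q^n\notin(\Pcal^n)^{\circ\circ}$ for all $n\ge N$. In particular (2) holds with $k=N$. Equivalently, this is the contrapositive of Lemma~\ref{lem:no-power-one-if-in-bipolar}.

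\emph{(2)$\Rightarrow$(3).} This is the step that needs the deferred Proposition~\ref{prop:notinbipolar-positive}: for any nonempty $\mathcal S$ and $Q\notin\mathcal S^{\circ\circ}$, one has $\inf_{R\in\mathcal S^{\circ\circ}}\KL(Q\|R)>0$. Applying it with $\mathcal S=\Pcal^k$ and the measure $Q^k$ gives $a_k(Q)>0$. If I had to argue it directly, I would suppose $R_j\in(\Pcal^k)^{\circ\circ}$ with $\KL(Q^k\|R_j)\to0$. Pinsker gives $R_j\to Q^k$ in total variation, and Lemma~\ref{lem:tv-bipolar} (closure of the bipolar under TV limits) puts $Q^k$ in the bipolar, a contradiction. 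The TV-closure itself would follow from Fatou/truncation: for $E\in\mathcal S^\circ$ and $M>0$, $\E_{Q^k}[E\wedge M]=\lim_j\E_{R_j}[E\wedge M]\le1$, then let $M\to\infty$.

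\emph{(3)$\Rightarrow$(4).} By Lemma~\ref{lem:an-superadditive-sec}, the limit $d_\star=\lim_n a_n/n=\sup_n a_n/n$ exists. Hence $d_\star\ge a_k/k>0$; this also covers the case $a_k=\infty$.

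\emph{(4)$\Rightarrow$(1).} This is the main obstacle, since I need almost-sure rather than expected growth. I would pick $m$ with $a_m>0$, which is possible because $\sup_n a_n/n>0$. By Proposition~\ref{prop:infkl-e-power}, there is $E^\star\in(\Pcal^m)^\circ$ with $\E_{Q^m}[\log E^\star]=:c\in(0,\infty)$. If $a_m=\infty$ we may still choose such an $E^\star$ with finite positive value; for integrability of the negative part, truncate the sup approximation so that $c$ is real, as in Proposition~\ref{prop:achieve-infkl-rate}.

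Form the blockwise product $E_{km}=\prod_{j\le k}E^\star(\text{block }j)$, a blockwise test supermartingale for $\Pcal$. The strong law applied to the i.i.d.\ block logs gives $\tfrac1k\log E_{km}\to c>0$ $Q^\infty$-a.s., so $E_{km}\to\infty$ a.s. Set $\tau:=\inf\{km: E_{km}\ge1/\alpha\}$, which is an $\bbF$-stopping time since block times are deterministic. Then $Q^\infty(\tau<\infty)=1$. For validity, Ville's inequality for the nonnegative supermartingale $(E_{km})_k$ under each $P^\infty$ gives $P^\infty(\tau<\infty)\le\alpha$. Since $\alpha$ was arbitrary, (1) follows.

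\emph{Moreover.} Given $k$ as in (2), Corollary~\ref{cor:out-propogates} gives $Q^n\notin(\Pcal^n)^{\circ\circ}$ for all $n\ge k$. Applying (2)$\Rightarrow$(3) at each such $n$ then yields $a_n(Q)>0$.
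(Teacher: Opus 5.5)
Your proof is correct. For (1)$\Rightarrow$(2), (2)$\Rightarrow$(3), (3)$\Rightarrow$(4) and the ``moreover'' clause it coincides with the paper's. The genuine difference is how you return to (1).

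The paper proves (2)$\Rightarrow$(1) directly from the definition of the bipolar:
\begin{itemize}
\item $Q^k\notin(\Pcal^k)^{\circ\circ}$ gives some $E\in(\Pcal^k)^\circ$ with $\E_{Q^k}[E]>1$, which can be truncated to be bounded.
\item Then $Z_\lambda=1-\lambda+\lambda E$ lies in the polar and has bounded logarithm.
\item With $\psi(\lambda):=\E_{Q^k}[\log Z_\lambda]$ one has $\psi(0)=0$ and $\psi'(0)=\E_{Q^k}[E]-1>0$, so $\psi(\lambda)>0$ for small $\lambda$.
\item The block product, Ville's inequality and the strong law then finish.
\end{itemize}
You instead close the cycle with (4)$\Rightarrow$(1). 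You invoke the strong duality of Proposition~\ref{prop:infkl-e-power} to extract an $e$-variable with positive expected log directly from $a_m>0$.

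The paper's route is more elementary. The equivalence of (1), (2), (3) then needs no duality theorem at all; only (3)$\Rightarrow$(4) uses it, through superadditivity. Its log-increments are also bounded, so no integrability bookkeeping arises. Your route depends on the heavier duality result but is more quantitative. If you choose $m$ and $E^\star$ near-optimally, as in Proposition~\ref{prop:achieve-infkl-rate}, you get a power-one test whose $Q^\infty$-a.s.\ growth rate is within any $\varepsilon$ of $d_\star(Q,\Pcal)$. This ties testability directly to the optimal rate.

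One small correction to your wording. Truncating $E$ to $E\wedge M$ controls the \emph{positive} part of $\log E$, which matters when $\E_{Q^m}[\log E]=+\infty$. It does not address the negative part, which is automatically integrable once $\E_{Q^m}[\log E^\star]>0$. In fact even $c=+\infty$ is harmless: the extended strong law, as used in the proof of Theorem~\ref{thm:two-sufficient-powerone}, still gives $E_{km}\to\infty$ almost surely.
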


An important point to make is that in Theorem~\ref{thm:powerone-characterization}, we do not require the one step condition, i.e.\ $a_1(Q)>0$. We can see this with a counter-example as follows: this is sufficient, but not necessary. 

\begin{example}\label{ex:onestep-suf}
Suppose that $\X=\{0,1\}$. Let $\Pcal=\{\delta_0, \delta_1\}$ and $Q=\frac12 \delta_0 + \frac12 \delta_1$. Then $a_1(Q):=\inf_{R\in\Pcal^{\circ\circ}}\KL(Q\|R)=0$ since $Q\in\Pcal^{\circ\circ}$. However, suppose that $A:=\{(0,1),(1,0)\}\subseteq \X^2$. Then, it follows that $\sup_{P\in\Pcal}P^{2}(A)=0$, while $Q^{2}(A)=1/2$. Thanks to Lemma~\ref{lem:event-sup}, it follows that $Q^2\notin (\Pcal^2)^{\circ\circ}$, and hence $a_2(Q)>0$. Therefore, thanks to Theorem~\ref{thm:powerone-characterization}, a power-one sequential test exists. Therefore, the correct assumption should be that there is positivity at some horizon $k$, namely $d_\star(Q,\Pcal)>0$.    
\end{example}

\section{When the Bipolar Rate Equals $\KLinf$}\label{sec:bipeqklinf}

For each $n$, let us define the class $\Pcal^n:=\{P^n:P\in\Pcal\}$ on $(\X^n, \B^{\otimes n})$. Now let $(\Pcal^n)^{\circ\circ}$ be the bipolar. Now define
\[
\infKL(Q^n,\Pcal^n) := \inf_{R\in(\Pcal^n)^{\circ\circ}} \KL(Q^n\|R)\in[0,\infty].
\]
Thanks to \parencite{larsson2025numeraire}, the infimum is always attained. Thus we can denote any minimizer as $P_n^\star$ and call it therefore the RIPr of $Q^n$ onto $(\Pcal^n)^{\circ\circ}$. We will now present our main result of when the $\infKL$ and $\KLinf$ coincide.

\begin{theorem}\label{thm:main}
Suppose that $Q\in\mathcal{M}_1(\X)$. Let $\Pcal\subseteq\mathcal{M}_1(\X)$ be nonempty. Assume that $\KLinf(Q, \Pcal):=\inf_{P\in\Pcal}\KL(Q\|P)<\infty$ and that $\Pcal$ is $\KLinf$ lower semicontinuous at $Q$. Then it follows that
\[
\boxed{
\lim_{n\to\infty}\frac{1}{n}\infKL(Q^n,\Pcal^n)=\KL_{\inf}(Q,\Pcal).
}
\]
And, this conclusion will hold whenever $\Pcal$ is weakly compact.
\end{theorem}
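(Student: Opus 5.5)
The upper bound is immediate. Since $\Pcal^n\subseteq(\Pcal^n)^{\circ\circ}$, we have $a_n\le\inf_{P\in\Pcal}\KL(Q^n\|P^n)=n\,\KLinf(Q,\Pcal)$ by tensorization of KL. Hence $\limsup_n a_n/n\le\KLinf(Q,\Pcal)$, and by Lemma~\ref{lem:an-superadditive-sec} the limit exists. The work is the lower bound: for fixed $\varepsilon>0$ we want, for all large $n$, an $e$-variable $E_n\in(\Pcal^n)^\circ$ with $\E_{Q^n}[\log E_n]\ge n(d-\varepsilon)-o(n)$, where $d:=\KLinf(Q,\Pcal)$. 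Proposition~\ref{prop:infkl-e-power} then gives $a_n\ge n(d-\varepsilon)-o(n)$.

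To construct $E_n$, use weak lower semicontinuity of $\Phi$ at $Q$. It yields a weak neighbourhood $U$ of $Q$, which we take basic, $U=\{R:|\int f_i\,dR-\int f_i\,dQ|<\eta,\ i\le r\}$ with $f_i\in C_b$, such that $\Phi(R)>d-\varepsilon$ for all $R\in U$. (If $\X$ is not Polish, we instead extract a subbasic neighbourhood directly from the definition of lower semicontinuity in the weak topology.)

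Let $\hat Q_n$ denote the empirical measure of $X_{1:n}$. The plan is a method-of-types/mixture argument. Let $E_n$ be the likelihood ratio of a mixture alternative $\bar Q_n$ against the null, restricted to $\{\hat Q_n\in U\}$, and control $\sup_{P}\E_{P^n}[E_n]$ via a large deviations bound. Concretely, by Sanov's theorem (upper bound, for the open/closed set $U$), for each $P$,
\[
P^n(\hat Q_n\in \bar U)\le \exp\Bigl(-n\inf_{R\in\bar U}\KL(R\|P)+o(n)\Bigr)\le e^{-n(d-2\varepsilon)},
\]
after shrinking $U$ so that $\Phi>d-\varepsilon$ on $\bar U$. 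Since $U$ is defined by finitely many bounded functions, this uniformity in $P$ can be obtained from the finite-dimensional Chernoff/Cramér bound for the vector $(\frac1n\sum f_i(X_j))_i\in[-C,C]^r$, together with a covering of the cube by small boxes. This is uniform in $P$ up to a polynomial factor in $n$, which avoids needing a dominating measure.

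Rather than a likelihood ratio, the simplest $e$-variable is then
\[
E_n:=e^{n(d-2\varepsilon)}\,c_n^{-1}\,\one\{\hat Q_n\in U\},
\]
with $c_n$ the polynomial/subexponential factor. It satisfies $\E_{P^n}E_n\le1$ for all $P$, but $\log E_n=-\infty$ off the event, which is fatal for expected log. We therefore mix it with the trivial $e$-variable: set $E_n':=\tfrac12+\tfrac12E_n\in(\Pcal^n)^\circ$. Since $Q^n(\hat Q_n\in U)\to1$ by the law of large numbers, we get
\[
\E_{Q^n}[\log E_n']\ge Q^n(\hat Q_n\in U)\bigl(n(d-2\varepsilon)-\log 2c_n\bigr)+\bigl(1-Q^n(\hat Q_n\in U)\bigr)\log\tfrac12.
\]
Dividing by $n$ gives $\liminf a_n/n\ge d-2\varepsilon$. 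Letting $\varepsilon\downarrow0$ concludes the proof. (If $d=0$ there is nothing to prove.)

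The main obstacle is the uniform-in-$P$ exponential bound $\sup_{P\in\Pcal}P^n(\hat Q_n\in U)\le e^{-n(d-2\varepsilon)+o(n)}$. Sanov's bound is per-$P$, and its $o(n)$ could depend on $P$. We would handle this with the Chernoff bound applied to finitely many half-spaces in $\mathbb R^r$. For each small closed convex box $B$ in the cube meeting the image of $\bar U$, minimax/convex duality gives $P^n(\text{mean}\in B)\le\exp(-n\inf\{\KL(R\|P):\int f\,dR\in B\})$ exactly, with no error term, by the Csiszár convex-set Sanov bound. The number of boxes is a constant independent of $n$. We take the boxes fine enough that their union lies in a slightly larger neighbourhood on which $\Phi>d-\varepsilon$ still holds. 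The final clause, weak compactness, follows from the cited fact that weak compactness implies $\KLinf$ lower semicontinuity everywhere. Finiteness of $d$ holds automatically in the compact case, or else we treat $d=\infty$ by the same argument with arbitrary $M$ in place of $d$.
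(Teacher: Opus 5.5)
Your proposal is correct. It rests on the same ingredients as the paper's proof: a convex, weakly closed neighbourhood of $Q$ on which $\Phi>d-\varepsilon$; Csisz\'ar's exact bound (Lemma~\ref{lem:convex-empirical}), which gives $\sup_{P\in\Pcal}P^n(\widehat Q_n\in C)\le e^{-n(d-\varepsilon)}$ with no $P$-dependent error term; and $Q^n(\widehat Q_n\in C)\to 1$.

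The difference is in how you turn this into a lower bound on $a_n$. The paper stays on the measure side. Lemma~\ref{lem:event-sup} transfers the small null probability to every $R\in(\Pcal^n)^{\circ\circ}$, and the binary data-processing inequality (Lemma~\ref{lem:binary-dpi}) then gives $\KL(Q^n\|R)\ge -H(p_n)+p_n\log(1/\beta_n)$ directly, with no appeal to duality. You stay on the test side. You exhibit the $e$-variable $\tfrac12+\tfrac12\,e^{n(d-2\varepsilon)}c_n^{-1}\one\{\widehat Q_n\in U\}$ and use only the easy direction $\E_{Q^n}[\log E]\le\KL(Q^n\|R)$, which is the argument in the proof of Lemma~\ref{lem:eprocess-upper-by-infkl}.

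The two routes are dual, since Lemma~\ref{lem:event-sup} amounts to rescaled indicators of the event lying in the polar. Your version, however, is constructive: it produces the near-optimal $n$-sample $e$-variable itself. Your mixing with the constant $1$ is exactly the $\gamma_k$ device the paper later uses in Proposition~\ref{prop:eprocess-rate}. Working with a cylinder neighbourhood built from finitely many $f_i\in C_b$ also makes measurability and $Q^n(\widehat Q_n\in U)\to1$ elementary, since both reduce to a weak law for $r$ bounded variables. The paper's bounded-Lipschitz ball instead uses Polishness and almost sure weak convergence of empirical measures.

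Two small corrections.

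First, the box covering is unnecessary. The set $\{R:|\int f_i\,dR-\int f_i\,dQ|\le\eta'\ \text{for all } i\}$ is already convex and weakly closed, so Lemma~\ref{lem:convex-empirical} applies to it in one step and no ``polynomial factor'' ever appears.

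Second, weak compactness does not force $d<\infty$; take $\Pcal=\{\delta_0\}$ and $Q=\delta_1$. So if the final clause is read without the finiteness hypothesis, your fallback of replacing $d$ by an arbitrary level $M$ is genuinely needed. That fallback is sound, because lower semicontinuity at $Q$ with $\Phi(Q)=\infty$ gives a neighbourhood on which $\Phi>M$.
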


To prove this theorem, we will use a KL lower bound that follows from the data processing inequality on a binary $\sigma$-field. 
\begin{lemma}\label{lem:binary-dpi}
Let $(\Omega, \mathcal{F})$ be measurable and let $\mu, \nu\in\mathcal{M}_1(\Omega)$ and let $A\in\mathcal{F}$. Now, define $p:=\mu(A)$ and $q:=\nu(A)$. Then it follows that
\[
\KL(\mu\|\nu)\ge p\log\frac{p}{q}+(1-p)\log\frac{1-p}{1-q}.
\]
\end{lemma}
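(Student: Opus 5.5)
This is the data processing inequality applied to the two-set partition $\{A, A^c\}$, so the plan is to push $\mu,\nu$ forward to $\{0,1\}$ and compare. First dispose of the trivial case: if $\mu\not\ll\nu$ then $\KL(\mu\|\nu)=+\infty$ and there is nothing to prove. So assume $\mu\ll\nu$. Then $q=0$ forces $p=0$, and $q=1$ forces $p=1$ (apply absolute continuity to $A$ and to $A^c$). Hence every term of the form $p\log(p/0)$ or $(1-p)\log((1-p)/0)$ on the right-hand side has a vanishing prefactor, and with the stated conventions the right-hand side is a well-defined finite number.

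Let $f:=d\mu/d\nu$. Set $\mathcal{G}:=\{\varnothing,A,A^c,\Omega\}$ and $\varphi(x):=x\log x$. I would prove the bound by conditional Jensen:
\[
\KL(\mu\|\nu)=\E_\nu[\varphi(f)]\ge \E_\nu\bigl[\varphi(\E_\nu[f\mid\mathcal G])\bigr].
\]
On $A$ we have $\E_\nu[f\mid\mathcal G]=p/q$ when $q>0$, and on $A^c$ it equals $(1-p)/(1-q)$ when $q<1$. Therefore
\[
\E_\nu\bigl[\varphi(\E_\nu[f\mid\mathcal G])\bigr]=q\,\varphi(p/q)+(1-q)\,\varphi\bigl((1-p)/(1-q)\bigr)=D(p\|q).
\]
If $q\in\{0,1\}$, the corresponding atom is $\nu$-null and contributes $0$. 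This matches the term $0\log(0/0)=0$ under the convention. Equivalently, one can apply the log-sum inequality separately on $A$ and on $A^c$.

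The only real care is integrability in the Jensen step. Since $\varphi\ge -1/e$, the quantity $\E_\nu[\varphi(f)]$ is well defined in $(-\infty,\infty]$. Conditional Jensen for a convex function bounded below holds in this extended sense, for instance by truncating $f\wedge M$ and using monotone convergence, or by writing $\varphi(x)=\sup$ of affine minorants. If $\KL(\mu\|\nu)=\infty$ the inequality is trivial anyway.

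Alternatively, one can cite the data processing inequality for the measurable map $\one_A:\Omega\to\{0,1\}$. It sends $\mu\mapsto\mathrm{Bern}(p)$ and $\nu\mapsto\mathrm{Bern}(q)$, and the $\KL$ between these two Bernoulli laws is exactly $D(p\|q)$.

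I expect no genuine obstacle. The main thing to get right is the bookkeeping of the boundary cases $q\in\{0,1\}$ and $p\in\{0,1\}$ under the paper's stated conventions.
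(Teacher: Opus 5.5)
Your proposal is correct and follows the paper's route: the paper gives no separate proof and only remarks that the bound follows from the data processing inequality on the binary $\sigma$-field $\{\varnothing,A,A^c,\Omega\}$. Your conditional-Jensen computation (equivalently, the pushforward under $\one_A$) supplies exactly that argument, and you handle the boundary cases $q\in\{0,1\}$ correctly via absolute continuity and the stated conventions.
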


In addition, an important lemma for us will be a convex-set empirical upper bound. Note that this is essentially a nonasymptotic version of Sanov, and we can attribute it to Csisz{\'a}r \cite{csiszar1984sanov}. Importantly, his assumption of almost complete convexity holds with our assumptions. It is crucial to note that under convexity alone, the lemma would not hold, as explained by Csisz{\'a}r in his work.

\begin{lemma}\label{lem:convex-empirical}
Let $P\in\mathcal M_1(\X)$ and let $C\subseteq\mathcal M_1(\X)$ be convex and weakly closed. Define $A_n:=\{\widehat Q_n\in C\}\subseteq \X^n$. Then it follows that for every $n\ge 1$,
\[
P^n(A_n)\ \le\ \exp\Bigl(-n\inf_{R\in C}\KL(R\|P)\Bigr).
\]
\end{lemma}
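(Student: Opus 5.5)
We follow Csisz\'ar's conditioning argument. Fix $n$ and write $\widehat Q_n:=\frac1n\sum_{i=1}^n\delta_{X_i}$. If $P^n(A_n)=0$ there is nothing to prove, so assume $P^n(A_n)>0$. Since $\X$ is Polish, $x\mapsto\widehat Q_n(x)$ is measurable into $\mathcal M_1(\X)$ and $A_n$ is measurable. Define the conditioned law
\[
\mu:=P^n(\,\cdot\mid A_n),
\qquad\text{so that}\qquad
\frac{d\mu}{dP^n}=\frac{\one_{A_n}}{P^n(A_n)} .
\]
The first step is the identity
\[
\KL(\mu\|P^n)=-\log P^n(A_n)<\infty .
\]
It then suffices to prove
\[
\KL(\mu\|P^n)\ \ge\ n\,\KL(R\|P)\quad\text{for some } R\in C,
\]
because this gives $-\log P^n(A_n)\ge n\inf_{R\in C}\KL(R\|P)$.

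The second step is a product-reference chain rule. Let $\mu_i$ denote the $i$-th marginal of $\mu$. We claim
\[
\KL(\mu\|P^n)\ \ge\ \sum_{i=1}^n\KL(\mu_i\|P).
\]
Since the left side is finite, each $\mu_i\ll P$. The identity
\[
\KL(\mu\|P^n)=\KL\Bigl(\mu\,\Big\|\,\textstyle\bigotimes_i\mu_i\Bigr)+\sum_i\KL(\mu_i\|P)
\]
then holds, and the first term is nonnegative. To justify the identity, write $\log\frac{d\mu}{dP^n}=\log\frac{d\mu}{d\otimes\mu_i}+\sum_i\log\frac{d\mu_i}{dP}(x_i)$ and integrate against $\mu$. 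The possibly infinite terms can be handled by truncation, or one can simply cite the standard superadditivity of KL with respect to product measures.

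The third step uses symmetry. The set $A_n$ is invariant under coordinate permutations because $\widehat Q_n$ is, and $P^n$ is exchangeable. Hence $\mu$ is exchangeable, and all marginals coincide with a single $R:=\mu_1$. The previous step then gives $\KL(\mu\|P^n)\ge n\KL(R\|P)$. Moreover, for every $B\in\B$,
\[
R(B)=\frac1n\sum_i\mu_i(B)=\E_\mu\bigl[\widehat Q_n(B)\bigr].
\]
So $R$ is the barycenter of the law of $\widehat Q_n$ under $\mu$, and this law is concentrated on $C$.

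The fourth step, which we expect to be the main obstacle, is to show $R\in C$. Convexity alone gives only finite mixtures, which is exactly where Csisz\'ar's almost complete convexity enters. We would use weak closedness together with Hahn--Banach. Regard $\mathcal M_1(\X)$ inside the locally convex space of finite signed measures with the $\sigma(\cdot,C_b)$ topology, whose continuous linear functionals are $S\mapsto\int f\,dS$ for $f\in C_b(\X)$. Suppose $R\notin C$. Since $C$ is convex and closed, there exist $f\in C_b$ and $c\in\mathbb R$ with
\[
\int f\,dR>c\ge\sup_{S\in C}\int f\,dS .
\]
But $\int f\,dR=\E_\mu\bigl[\int f\,d\widehat Q_n\bigr]$, which follows from the barycenter identity extended from indicators to bounded $f$ by the usual approximation. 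On $A_n$ we have $\widehat Q_n\in C$, so $\int f\,d\widehat Q_n\le c$ pointwise there, and taking the $\mu$-expectation gives $\int f\,dR\le c$. This is a contradiction, so $R\in C$.

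Combining the four steps, $-\log P^n(A_n)=\KL(\mu\|P^n)\ge n\KL(R\|P)\ge n\inf_{R'\in C}\KL(R'\|P)$, which is the claimed bound.
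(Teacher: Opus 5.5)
Your proof is correct and follows essentially the same route as the paper. The paper gives no proof of its own and simply cites Csisz\'ar, whose argument is exactly yours: condition on $A_n$ so that $\KL(\mu\|P^n)=-\log P^n(A_n)$, use superadditivity over marginals and exchangeability, then show that the averaged marginal lies in $C$. Your Hahn--Banach step makes explicit the paper's unproved remark that weakly closed convex sets satisfy Csisz\'ar's (almost) complete convexity. The one detail worth stating is that $C$ is closed in the ambient $\sigma(\cdot,C_b)$ topology on signed measures, since $\mathcal M_1(\X)$ is itself closed there; alternatively, reduce to finitely many test functions. Either way the separation theorem applies.
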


\begin{proof}[Proof of Theorem~\ref{thm:main}]
Write the shorthand $d:=\KLinf(Q,\Pcal)=\inf_{P\in\Pcal}\KL(Q\|P)\in[0,\infty)$. Note that if $d=0$, then for every $n$ our universal lower upper bound below will imply that $0\le \infKL(Q^n, \Pcal^n)\le nd=0$. So it would follow that $\infKL(Q^n, \Pcal^n)=0$ for all $n$ and so our desired limit would hold. Hence we can assume going forward without loss of generality that $d>0$. We will first prove universal upper bound. Note that this holds under no assumptions and holds for any measurable space (we do not need to assume Polish structure). Now, because $\Pcal^n\subseteq (\Pcal^n)^{\circ\circ}$,
\[
\infKL(Q^n, \Pcal^n) = \inf_{R\in(\Pcal^n)^{\circ\circ}}\KL(Q^n\|R)\le \inf_{P\in\Pcal} \KL(Q^n\|P^n).
\]
We also have that
\[\inf_{P\in\Pcal}\KL(Q^n\|P^n)=\inf_{P\in\Pcal}n\KL(Q\|P)= n\inf_{P\in\Pcal}\KL(Q\|P),
\]
Therefore, for all $n$ we have that $\infKL(Q^n, \Pcal^n)\le nd$, and hence
\[
\limsup_{n\to\infty}\frac{1}{n}\infKL(Q^n, \Pcal^n)\le d.
\]
We now show the matching lower bound to show tightness. Take a particular $\varepsilon\in\bigl(0,\tfrac{d}{4}\bigr)$. Now define $\Phi(R):=\inf_{P\in\Pcal}\KL(R\|P)$. Note that our second assumption in this theorem entails that $\Phi$ be lower semicontinuous at $Q$. Or equivalently, for this specific $\varepsilon$, there exists a weakly open neighborhood $O_\varepsilon$ of $Q$ such that $\Phi(R)\ge d-\varepsilon$ for all $R\in O_\varepsilon$. Now consider a particular bounded Lipschitz metric $d_{\mathrm{BL}}$ on $\mathcal M_1(\X)$, which necessarily metrizes the weak topology on Polish $\X$. Because $O_\varepsilon$ is weakly open and contains $Q$, there exists $r>0$ such that the open $d_{\mathrm{BL}}$-ball
\[
B_r^{\mathrm{BL}}(Q):=\{R\in\mathcal M_1(\X):d_{\mathrm{BL}}(R,Q)<r\}
\]
is contained in $O_\varepsilon$. In addition set $\delta:=r/2$ and define the closed ball
\[
C:=\overline{B}_\delta^{\mathrm{BL}}(Q):=\{R\in\mathcal M_1(\X): d_{\mathrm{BL}}(R,Q)\le \delta\}.
\]
Clearly, then $C$ is weakly closed. And because $d_{\mathrm{BL}}$ is induced by a norm on signed measures, it follows that $C$ is convex. In addition, $C\subseteq B_r^{\mathrm{BL}}(Q)\subseteq O_\varepsilon$, so $\inf_{R\in C}\Phi(R)\ge d-\varepsilon$. Define the event $A_n:=\{\widehat Q_n\in C\}\subseteq \X^n$. Since $d_{\mathrm{BL}}(\widehat Q_n,Q)\to 0$ almost surely under $Q^\infty$, it follows that $Q^n(A_n)\to 1$. Define $\beta_n:=\sup_{P\in\Pcal}P^n(A_n)$. Thanks to Lemma~\ref{lem:convex-empirical}, for each $P\in\Pcal$, $P^n(A_n)\le \exp\Bigl(-n\inf_{R\in C}\KL(R\|P)\Bigr)$. Therefore taking the supremum, we have 
\[
\beta_n\le \exp\Bigl(-n\inf_{R\in C}\inf_{P\in\Pcal}\KL(R\|P)\Bigr) =\exp\Bigl(-n\inf_{R\in C}\Phi(R)\Bigr) \le \exp\bigl(-n(d-\varepsilon)\bigr).
\]

\noindent Equivalently for all $n\ge 1$, 
\begin{equation}\label{eq:beta-bound}
\frac1n\log\frac1{\beta_n}\ge d-\varepsilon.    
\end{equation}

\noindent Now take any particular $n\ge1$ and any $R\in(\Pcal^n)^{\circ\circ}$. By Lemma~\ref{lem:event-sup} applied to $\mathcal{S}=\Pcal^n$ and the event $A_n$ we have that $R(A_n)\le \sup_{S\in\Pcal^n} S(A_n)=\sup_{P\in\Pcal}P^n(A_n)=\beta_n$. Now let $p_n:=Q^n(A_n)$ and $q_n:=R(A_n)$. Then $p_n\to 1$ and $q_n\le \beta_n$. Thus by Lemma~\ref{lem:binary-dpi} we get that
\begin{equation}\label{eq:kl-binary}
\KL(Q^n\|R)\ge p_n\log\frac{p_n}{q_n}+(1-p_n)\log\frac{1-p_n}{1-q_n}.   
\end{equation}
We can use the fact that $\log(1-q_n)\le 0$ and therefore that $-(1-p_n)\log(1-q_n)\ge 0$ to give us a simpler bound. That is,
\begin{align}
p_n\log\frac{p_n}{q_n}+(1-p_n)\log\frac{1-p_n}{1-q_n} &=p_n\log p_n + (1-p_n)\log(1-p_n)-p_n\log q_n-(1-p_n)\log(1-q_n)\nonumber\\
&\ge -H(p_n)+p_n\log\frac{1}{q_n},\label{eq:kl-lower-entropy}
\end{align}
where $H(p):=-p\log p-(1-p)\log(1-p)$ is the binary entropy. Now since $p_n\to 1$, it must be the case that $H(p_n)\to 0$. Also, since $q_n\le \beta_n$, it must be the case that $\log(1/q_n)\ge \log(1/\beta_n)$. Together then both \eqref{eq:kl-binary} and \eqref{eq:kl-lower-entropy} give us that
\[
\KL(Q^n\|R)\ge -H(p_n)+p_n\log\frac{1}{\beta_n}.
\]
Dividing by $n$ and taking the $\liminf_{n\to\infty}$, and taking the infimum over $R\in(\Pcal^n)^{\circ\circ}$ gives us for all $n\ge n_0(\varepsilon)$ therefore that $\infKL(Q^n, \Pcal^n)\ge -H(p_n)+p_n\log\frac{1}{\beta_n}$. Now let $c_n:=\frac1n\log\frac1{\beta_n}\ge 0$, which is because $\beta_n\le 1$. Then it follows that
\[
\frac1n\infKL(Q^n,\Pcal^n)\ge -\frac{H(p_n)}{n}+p_n c_n.
\]
Now $H(p_n)\le \log 2$. Therefore, $H(p_n)/n\to0$, and $p_n\to 1$. Now by \eqref{eq:beta-bound}, we have that $c_n=\frac1n\log\frac1{\beta_n}\ge d-\varepsilon$ for all $n$. Therefore, $\liminf_{n\to\infty} c_n\ge d-\varepsilon$. Since $c_n\ge 0$ and $p_n\to 1$, it follows that $\liminf p_nc_n \ge \liminf c_n$. Therefore, we get
\[
\liminf_{n\to\infty}\frac1n\infKL(Q^n,\Pcal^n)\ge d-\varepsilon.
\]
Recall that $\varepsilon>0$ was arbitrary. So, $\liminf_{n\to\infty}\frac{1}{n}\infKL(Q^n, \Pcal^n)\ge d$, completing the proof.
\end{proof}

One thing to note is why we need some regularity assumption like lower semicontinuity of $R\mapsto \inf_{P\in\Pcal}\KL(R\|P)$ at $Q$. If we did not have this, then there could be a sequence $R_k\Rightarrow Q$ with $\inf_{P\in\Pcal}\KL(R_k\|P)$ much smaller than $\inf_{P\in\Pcal}\KL(Q\|P)$. Suppose that we are in one of these cases. Then we can easily build very rich classes $\Pcal$ which are noncompact and very oscillatory such that the bipolar $(\Pcal^n)^{\circ\circ}$ becomes large enough to where $\infKL(Q^n, \Pcal^n)$ grows sublinearly in n, while the $\KLinf(Q, \Pcal)$ stays strictly postive. Hence our desired asymptotic equality would fail. This case of failure is the motivation for weak compactness of $\Pcal$ or any other condition that implies our lower semicontinuity assumption.

Let us now once again compare the two quantities $\infKL(Q,\mathcal P):=\inf_{R\in\Pcal^{\circ\circ}}\KL(Q\|R)$ and $\KLinf(Q,\mathcal P):=\inf_{P\in\mathcal P}\KL(Q\|P)$. We will write,
\[
\mathcal B_b(\X,\B):=\{f:\X\to\mathbb R: f \text{ is bounded and }\B\text{-measurable}\}.
\]
In addition, we will use the notation $L(Q, f):=\E_Q[f]-\Lambda_{\mathcal P}(f)$ and $\Lambda_{\Pcal}(f):=\sup_{P\in\Pcal}\log \E_P[e^f]$. We will use a shorthand for the very well known $\mathrm{GRO}$ value as $R_{\mathcal P}(Q)$. Of course, \parencite{larsson2025numeraire} show that the $\infKL(Q,\mathcal P)=R_{\mathcal P}(Q)$. We now show that $R_{\mathcal P}(Q)=\sup_{f\in\mathcal B_b(\X,\B)}L(Q,f)$. 

Now for some particular $Q\in\mathcal M_1$, let us define $$R_{\mathcal{P}}(Q):=\sup_{X\in\mathcal{E}(\mathcal{P})} \E_{Q}[\log X]\in[0,+\infty].$$ 
Note that though $R_{\mathcal P}(Q)$ is defined by optimizing over all the $e$-variables, we can represent it by bounded measurable functions. We can have a bounded variational representation of the point alternative value. To this end, for $f\in\Bb$ define,
\[
\Lambda_{\mathcal{P}}(f):=\log\Bigl(\sup_{P\in\mathcal{P}}\E_P[e^f]\Bigr),\quad
L(Q,f):=\E_{Q}[f]-\Lambda_{\mathcal{P}}(f).
\]
Since $f$ is bounded, $e^f$ is bounded. Therefore, $\Lambda_{\mathcal P}(f)\in\mathbb R$ is well-defined.

\begin{theorem}\label{thm:bounded-representation}
For every $Q\in\mathcal M_1$, we have $R_{\mathcal P}(Q)=\sup_{f\in\Bb} L(Q,f)$. More explicitly,
\[
\sup_{X\in\mathcal{E}(\mathcal{P})} \E_{Q}[\log X] = \sup_{f\in\Bb} \E_{Q}[f] - \log\Bigl(\sup_{P\in\mathcal{P}}\E_P[e^f]\Bigr) = \inf_{P\in\mathcal{P}^{\mathrm{Eff}}} H(Q\mid P).
\]
\end{theorem}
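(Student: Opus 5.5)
The plan is to prove the first equality, $R_{\mathcal P}(Q)=\sup_{f\in\Bb}L(Q,f)$, directly by comparing the two optimization problems. The second equality, with the effective null $\mathcal P^{\mathrm{Eff}}=\mathcal P^{\circ\circ}$, is Proposition~\ref{prop:infkl-e-power} at horizon $n=1$, since $\mathcal E(\mathcal P)$ here is the polar $\mathcal P^{\circ}$ of $e$-variables. So all the work is in two inequalities between $\sup_{X}\E_Q[\log X]$ and $\sup_f L(Q,f)$.

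\emph{Step 1: $\sup_f L(Q,f)\le R_{\mathcal P}(Q)$.} Fix $f\in\Bb$ and set $c:=\sup_{P\in\mathcal P}\E_P[e^f]$. Because $f$ is bounded, $c\in(0,\infty)$. Define
\[
X:=e^{f}/c .
\]
Then $X$ is measurable, nonnegative and bounded, and $\E_P[X]\le 1$ for every $P\in\mathcal P$, so $X\in\mathcal P^{\circ}$. Since $\log X=f-\log c$ is bounded, the truncation convention of Definition~\ref{def:kl} plays no role. Hence $\E_Q[\log X]=\E_Q[f]-\Lambda_{\mathcal P}(f)=L(Q,f)$. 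Taking the supremum over $f$ gives this inequality.

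\emph{Step 2: $R_{\mathcal P}(Q)\le\sup_f L(Q,f)$.} Fix $X\in\mathcal P^\circ$; we may assume $\E_Q[\log X]>-\infty$. For $M\ge 1$ and $\varepsilon\in(0,1)$, set
\[
f_{M,\varepsilon}:=\log\bigl((X\wedge M)+\varepsilon\bigr).
\]
This $f_{M,\varepsilon}$ takes values in $[\log\varepsilon,\log(M+\varepsilon)]$, so it lies in $\Bb$; this remains true on $\{X=0\}$ and $\{X=\infty\}$. For every $P\in\mathcal P$,
\[
\E_P[e^{f_{M,\varepsilon}}]\le \E_P[X]+\varepsilon\le 1+\varepsilon ,
\]
so $\Lambda_{\mathcal P}(f_{M,\varepsilon})\le\log(1+\varepsilon)$. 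By monotonicity of the logarithm,
\[
L(Q,f_{M,\varepsilon})\ \ge\ \int\log(X\wedge M)\,dQ-\log(1+\varepsilon).
\]
Now take the supremum over $M\ge1$. By the definition $\E_Q[\log X]:=\sup_{M}\int\log(X\wedge M)\,dQ$, this gives $\sup_f L(Q,f)\ge \E_Q[\log X]-\log(1+\varepsilon)$. Letting $\varepsilon\downarrow0$ and then taking the supremum over $X\in\mathcal P^\circ$ finishes the step. This argument also covers the case $R_{\mathcal P}(Q)=+\infty$.

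The only real obstacle is Step 2, in making an arbitrary, possibly unbounded or zero-valued, $e$-variable bounded while staying essentially inside the polar. Truncating from above is harmless, because it only lowers $\E_P[X]$. Truncating from below, which is needed so that $\log$ is bounded below, breaks the constraint $\E_P\le 1$. The additive $\varepsilon$ shift handles this: it costs at most $\log(1+\varepsilon)$ in $\Lambda_{\mathcal P}$, uniformly in $P$, and it does not decrease $\E_Q[\log(\cdot)]$. The remaining care is to apply the truncation convention of Definition~\ref{def:kl} consistently, so that the limit in $M$ is exactly the definition of $\E_Q[\log X]$ and no monotone-convergence argument is needed.
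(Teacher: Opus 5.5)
Your proposal is correct and follows the paper's proof. Step 1 is identical: you normalize $e^f$ by $\sup_{P}\E_P[e^f]$. Step 2 uses the same idea of a bounded truncation whose exponential exceeds $X$ by at most a small constant, so that $\Lambda_{\Pcal}$ costs only a vanishing amount.

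The only difference is cosmetic. The paper takes $f_n=(\log X\wedge n)\vee(-n)$ with $e^{f_n}\le X+e^{-n}$, and passes to the limit by monotone convergence on $(\log X)^{\pm}$ after splitting off the case $\E_Q[(\log X)^-]=\infty$. Your additive shift $\log((X\wedge M)+\varepsilon)$ lets you use the truncation definition of $\E_Q[\log X]$ directly and skip that case split. Your appeal to Proposition~\ref{prop:infkl-e-power} at $n=1$ for the last equality matches how the paper handles it, by citation.
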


\begin{proof}
First, we start with the simpler direction; we will show that $\sup_{f\in\Bb}L(Q,f)\le R_{\mathcal P}(Q)$. To begin, take an arbitrarily chosen $f\in\Bb$ and set $X_f:=e^{f-\Lambda_{\mathcal P}(f)}$. It follows that for every $P\in\mathcal P$ we have 
\[
\E_P[X_f]=\E_P\left[e^{f-\Lambda_{\mathcal{P}}(f)}\right] = e^{-\Lambda_{\mathcal{P}}(f)}\, \E_P[e^f]\le e^{-\Lambda_{\mathcal{P}}(f)}\, \sup_{P'\in\mathcal{P}} \E_{P'}[e^f]=1.
\]
Thus, $X_f\in\mathcal E(\mathcal P)$, implying 
\begin{align*}
L(Q,f)&=\E_{Q}[f]-\Lambda_{\mathcal{P}}(f)\\
&=\E_{Q}\left[\log\bigl(e^{f-\Lambda_{\mathcal{P}}(f)}\bigr)\right]\\
&=\E_{Q}[\log X_f]\\
&\le \sup_{X\in\mathcal{E}(\mathcal{P})} \E_{Q}[\log X]\\
&=R_{\mathcal{P}}(Q).
\end{align*}
Taking the supremum over all $f\in\Bb$ gives us that $\sup_{f\in\Bb}L(Q,f)\le R_{\mathcal P}(Q)$. So the first direction is proved. 

It remains to now show that $R_{\mathcal{P}}(Q)\le \sup_{f\in\Bb}L(Q,f)$. To this end, let us take some particular but arbitrary $X\in\mathcal E(\mathcal P)$. We need to show that $\E_{Q}[\log X]\le \sup_{f\in\Bb}L(Q,f)$. First suppose that $E_Q[(\log X)^-]=\infty$. In this case, by convention it follows that $\E_{Q}[\log X]=-\infty$, so the bound follows. Thus, let us assume wlog from this point on that $\E_{Q}[(\log X)^-]<\infty$. For each $n\in\mathbb N$, let us define the bounded truncation $f_n:=(\log X\wedge n)\vee (-n)\in\Bb$. Therefore, it follows that,
\[
e^{f_n}=\exp\bigl((\log X\wedge n)\vee(-n)\bigr)=(X\wedge e^n)\vee e^{-n}\le X+e^{-n},
\]
which holds pointwise on $\Omega$. Therefore, since $X\in\mathcal E(\mathcal P)$, we have that for every $P\in\mathcal P$, 
\[
\E_P[e^{f_n}]\le \E_P[X+e^{-n}] = \E_P[X]+e^{-n}\le 1 +e^{-n}.
\]
Taking the supremum over all $P\in\mathcal P$ and then taking the logarithm gives us
\begin{equation}\label{eq:Lambda-bound}
\Lambda_{\mathcal P}(f_n)\le \log(1+e^{-n}).    
\end{equation}
On the other hand, the positive and negative parts of $f_n$ are simply $f_n^+=(\log X)^+\wedge n$ and $f_n^-=(\log X)^-\wedge n$ respectively. By the monotone convergence theorem, it follows that $\E_{Q}[f_n^+]\uparrow\E_{Q}[(\log X)^+]$ and $\E_{Q}[f_n^-]\uparrow\E_{Q}[(\log X)^-]<\infty$ (we assumed earlier wlog that the negative limit is finite). And so therefore it is appropriate for us to subtract these limits. Therefore we get that $\E_{Q}[f_n]=\E_{Q}[f_n^+]-\E_{Q}[f_n^-]\to \E_{Q}[(\log X)^+]-\E_{Q}[(\log X)^-]=\E_{Q}[\log X]$. Hence, we conclude that
\begin{equation}\label{eq:fn-to-logX}
\E_{Q}[f_n] \longrightarrow \E_{Q}[\log X].    
\end{equation}

Now \eqref{eq:Lambda-bound} and \eqref{eq:fn-to-logX} yield
\[
L(Q,f_n)=\E_{Q}[f_n]-\Lambda_{\mathcal{P}}(f_n)\ge \E_{Q}[f_n]-\log(1+e^{-n}) \to \E_{Q}[\log X]. 
\]
Since $f_n \in \Bb$, we therefore get
\begin{align*}
\sup_{f\in\Bb} L(Q,f) &\ge  \E_{Q}[\log X].
\end{align*}

This holds for every $e$-variable $X$. So, it follows that $R_{\mathcal{P}}(Q)=\sup_{X\in\mathcal{E}(\mathcal{P})}\E_{Q}[\log X]\le\sup_{f\in\Bb}L(Q,f)$, concluding the proof.
\end{proof}

It is interesting to note how the $\infKL$ and $\KLinf$ relate nonasymptotically. We will show in the following proposition.

\begin{proposition}\label{prop:infkl-klinf}
For every class $\Pcal$ of probability measures and every probability measure $Q$, we always have that
\[
\infKL(Q,\Pcal)\le \KLinf(Q,\Pcal).
\]
If in addition, $(\X,\B)$ is the Borel space of a metric space and $\Pcal$ is weakly compact and convex, then
\[
\infKL(Q,\Pcal)=\KLinf(Q,\Pcal).
\]
\end{proposition}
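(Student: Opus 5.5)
The first inequality is immediate from $\Pcal\subseteq\Pcal^{\circ\circ}$ (noted after Definition~\ref{def:bipolar}): the infimum defining $\infKL(Q,\Pcal)$ runs over a larger set than the one defining $\KLinf(Q,\Pcal)$, so it can only be smaller. No structure on $\Pcal$ or $(\X,\B)$ is needed.

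For the reverse inequality under weak compactness and convexity, I will use the variational representation of Theorem~\ref{thm:bounded-representation}, which gives
\[
\infKL(Q,\Pcal)=\sup_{f\in\Bb}\Bigl(\E_Q[f]-\log\sup_{P\in\Pcal}\E_P[e^f]\Bigr).
\]
The Donsker--Varadhan formula gives
\[
\KL(Q\|P)=\sup_{f}\bigl(\E_Q[f]-\log\E_P[e^f]\bigr),
\]
where $f$ may be taken in $C_b(\X)$ on a metric space. Hence
\[
\KLinf(Q,\Pcal)=\inf_{P\in\Pcal}\sup_{f\in C_b}\bigl(\E_Q[f]-\log\E_P[e^f]\bigr),
\]
while
\[
\infKL(Q,\Pcal)\ge \sup_{f\in C_b}\inf_{P\in\Pcal}\bigl(\E_Q[f]-\log\E_P[e^f]\bigr).
\]
So the claim reduces to a minimax interchange for $F(P,f):=\E_Q[f]-\log\E_P[e^f]$ on $\Pcal\times C_b(\X)$.

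To justify the interchange I will apply Sion's minimax theorem. The checks are as follows.
\begin{itemize}
\item For fixed $f\in C_b$, the map $P\mapsto \E_P[e^f]$ is weakly continuous and affine. Since $-\log$ is convex and decreasing, $P\mapsto F(P,f)$ is convex and weakly continuous (in particular l.s.c.) on the convex, weakly compact set $\Pcal$.
\item For fixed $P$, the map $f\mapsto F(P,f)$ is concave on the vector space $C_b$, because $f\mapsto\log\E_P[e^f]$ is convex by Hölder's inequality. Upper semicontinuity in $f$ is not needed, since Sion only requires a topology on the non-compact side for which the function is u.s.c.; I can give $C_b$ the sup-norm topology, under which $F(P,\cdot)$ is continuous.
\end{itemize}
Sion's theorem, with compactness on the $P$ side, then yields
\[
\inf_{P\in\Pcal}\sup_{f}F(P,f)=\sup_f\inf_{P\in\Pcal}F(P,f).
\]
Since $\inf_P F(P,f)=L(Q,f)$, we get $\KLinf(Q,\Pcal)\le\sup_{f\in C_b}L(Q,f)\le\sup_{f\in\Bb}L(Q,f)=\infKL(Q,\Pcal)$. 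This also covers the case where $\KLinf=\infty$, as Sion applies to extended values here: the supremum side is unbounded precisely when the infimum side is.

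The step I expect to be the main obstacle is confirming that Donsker--Varadhan may be restricted to $f\in C_b(\X)$ rather than bounded measurable $f$. This restriction is what makes $P\mapsto F(P,f)$ weakly continuous, which Sion needs. On a metric space I will argue it via lower semicontinuity of KL in the weak topology: the supremum over $C_b$ defines a weakly l.s.c. convex functional that agrees with KL. Alternatively, I will approximate bounded measurable $f$ by continuous ones in $L^1(Q+P)$ with uniformly bounded norms (Lusin). The rest is bookkeeping.
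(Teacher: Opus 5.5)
Your proof is correct and follows the same route as the paper. The first inequality comes from the inclusion $\Pcal\subseteq\Pcal^{\circ\circ}$. For the reverse, both arguments compare the identity $\KLinf(Q,\Pcal)=\sup_{f\in C_b(\X)}L(Q,f)$ with the $\Bb$-representation of $\infKL(Q,\Pcal)$ from Theorem~\ref{thm:bounded-representation}, using $C_b(\X)\subseteq\Bb$.

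The only difference is that the paper cites \cite{shekhar2025optimalanytimevalidtestscomposite} for the $C_b$ identity, whereas you derive it yourself. You do this via Donsker--Varadhan over $C_b(\X)$ together with a minimax interchange. Your $L^1(P+Q)$-approximation argument justifies the restriction to $C_b(\X)$ cleanly on a metric space. The interchange is valid as you set it up, but your remark that upper semicontinuity in $f$ is ``not needed'' is inaccurate for Sion's theorem, which does require it. The sup-norm topology you chose supplies it, so nothing breaks.
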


\begin{proof}
We will first show the first part that $\infKL(Q, \mathcal P)\le \KLinf(Q,\mathcal P)$ always holds. We first show that $\Pcal\subseteq\Pcal^{\circ\circ}$. Suppose that $P\in\mathcal P$. To show that $P\in\Pcal^{\circ\circ}$, we need to only show that $\E_P[X]\le 1$ for every $X\in\Pcal^{\circ}$. But, this is just the definition of $X\in\Pcal^{\circ}$. Therefore, $P\in\Pcal^{\circ\circ}$, and hence $\Pcal\subseteq\Pcal^{\circ\circ}$. Now let us take the infimum of the $\KL(Q\|\cdot)$ over both sets, yielding 
\[
\infKL(Q,\Pcal) = \inf_{R\in\Pcal^{\circ\circ}}\KL(Q\|R)\le \inf_{P\in\Pcal}\KL(Q\|P) = \KLinf(Q,\mathcal P).
\]
It remains to now show that if $\Pcal$ is weakly compact and convex, then the reverse inequality will hold also. To this end, assume $(\X,\B)$ is the Borel space of a metric space and that $\Pcal$ is weakly compact and convex. Thanks to \parencite{shekhar2025optimalanytimevalidtestscomposite} we know that
\begin{equation}\label{shubanshu-result}
\KLinf(Q,\Pcal)=\sup_{f\in C_b(\X)}\left\{\E_Q[f]-\sup_{P\in\Pcal}\log \E_P[e^f]\right\} = \sup_{f\in C_b(\X)}L(Q,f).
\end{equation}
On the other hand, using the fact that $\infKL(Q,\Pcal)=R_{\Pcal}(Q)$ by our previous bounded-representation result we have that
\begin{equation}\label{eq:bounded-measurable-dual}
\infKL(Q,\Pcal)=R_{\Pcal}(Q)=\sup_{f\in\mathcal B_b(\X,\B)}L(Q,f).
\end{equation}

Every bounded continuous function is bounded and measurable so $C_b(\X)\subseteq\mathcal B_b(\X,\B)$. This enlargement of the function classes can only increase the supremum. Therefore, from \eqref{shubanshu-result} and \eqref{eq:bounded-measurable-dual},
\[
\KLinf(Q,\Pcal)=\sup_{f\in C_b(\X)}L(Q,f)\le \sup_{f\in \mathcal B_b(\X,\B)}L(Q,f)=\infKL(Q,\Pcal).
\]
And, taken together with the bound where we lower bounded the $\KLinf$, we get that $\infKL(Q,\Pcal)=\KLinf(Q,\Pcal)$. This completes the proof.
\end{proof}

\section{Counterexamples and Structural Examples}\label{sec:counterx}
We will first see what happens without regularity. We will show a counterexample of what happens when the second assumption in our Theorem~\ref{thm:main} fails. But we will first prove that total variation limits lie in the bipolar. This will help us better intuit the counter-example.

\begin{lemma}\label{lem:tv-bipolar}
Let $(\Omega, \mathcal F)$ be measurable and let $\mathcal S\subseteq \mathcal M_1(\Omega)$ be nonempty. Suppose that in fact there exists a sequence $(S_m)_{m\ge 1}\subseteq\mathcal S$ such that $S_m\to S$ in total variation. Meaning, $\|S_m-S\|_{\mathrm{TV}}:=\sup_{A\in\mathcal F}|S_m(A)-S(A)|\longrightarrow 0$. Then, we have that $S\in\mathcal S^{\circ\circ}$.  
\end{lemma}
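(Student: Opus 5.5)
The plan is to verify the defining inequality of $\mathcal S^{\circ\circ}$ directly. Fix an arbitrary $E\in\mathcal S^{\circ}$, so $E:\Omega\to[0,\infty]$ is measurable and $\E_{P}[E]\le 1$ for every $P\in\mathcal S$; in particular $\E_{S_m}[E]\le 1$ for all $m$. We must show $\E_S[E]\le 1$. The only difficulty is that $E$ may be unbounded (even $+\infty$ on some set), while total variation convergence only controls integrals of bounded functions. So the argument has three steps: truncate, pass to the limit in $m$, then remove the truncation.

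\emph{Step 1 (truncation).} For $M>0$ set $E_M:=E\wedge M$, a measurable function with values in $[0,M]$. Since $E_M\le E$, we have $\E_{S_m}[E_M]\le \E_{S_m}[E]\le 1$ for every $m$.

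\emph{Step 2 (TV limit for bounded functions).} We show $\bigl|\E_{S_m}[E_M]-\E_S[E_M]\bigr|\le M\,\|S_m-S\|_{\mathrm{TV}}$. Using the layer-cake formula $\E_\mu[E_M]=\int_0^M \mu(E_M>t)\,dt$ for any probability measure $\mu$, we get
\[
\bigl|\E_{S_m}[E_M]-\E_S[E_M]\bigr|\le\int_0^M \bigl|S_m(E_M>t)-S(E_M>t)\bigr|\,dt\le M\,\|S_m-S\|_{\mathrm{TV}},
\]
because each set $\{E_M>t\}$ lies in $\mathcal F$. Letting $m\to\infty$ then gives $\E_S[E_M]=\lim_m \E_{S_m}[E_M]\le 1$.

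\emph{Step 3 (remove truncation).} As $M\uparrow\infty$, $E_M\uparrow E$ pointwise, including on the set $\{E=\infty\}$. By monotone convergence, $\E_S[E]=\lim_{M}\E_S[E_M]\le 1$. Since $E\in\mathcal S^{\circ}$ was arbitrary, $S\in\mathcal S^{\circ\circ}$.

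The only delicate point is the possible unboundedness of $E$, which is exactly what the truncation in Step 1 and the monotone convergence in Step 3 handle. Note also that $S$ is a probability measure, since total variation limits of probability measures preserve total mass $1$, so $S$ is an admissible element of $\mathcal M_1(\Omega)$ in Definition~\ref{def:bipolar}.
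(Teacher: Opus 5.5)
Your proposal is correct and follows the paper's proof essentially step for step: truncate to $E\wedge M$, pass to the limit in $m$ using total variation convergence for bounded functions, then let $M\to\infty$ by monotone convergence. The only addition is your layer-cake bound $\bigl|\E_{S_m}[E_M]-\E_S[E_M]\bigr|\le M\,\|S_m-S\|_{\mathrm{TV}}$, which makes explicit the fact about bounded measurable integrands that the paper simply asserts.
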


\begin{proof}
Let $E\in\mathcal S^\circ$. Then, $\E_{S_m}[E]\le 1$ for all $m$. Now, for $M\ge 1$ define the truncation $E^{(M)}:=E\wedge M$ so that both $0\le E^{(M)}\le M$ and $\E_{S_m}[E^{(M)}]\le \E_{S_m}[E]\le 1$. Total variation convergence means convergence of the integrals of bounded measurable functions. Hence we get that
\[
\E_S[E^{(M)}]=\lim_{m\to\infty}\E_{S_m}[E^{(M)}]\le 1.
\]
Now letting $M\to\infty$ and applying the monotone convergence theorem, we can see that $\E_S[E]\le 1$. Because this holds for all $E\in\mathcal S^{\circ}$, it indeed must be the case that $S\in\mathcal S^{\circ\circ}$.
\end{proof}

We now present another proposition that entails that the bipolar contains the total-variation closed convex hull. It's immediate from our Lemma~\ref{lem:tv-bipolar} and from Theorem~3.4 of \cite{larsson2026completecharacterizationtestablehypotheses}.

\begin{proposition}\label{prop:bipolar-tvconv}
Let $(\Omega, \mathcal F)$ be measurable and let $\mathcal S\subseteq \mathcal M_1(\Omega)$ be nonempty. Denote $\operatorname{conv}(\mathcal S)$ for its convex hull, the set of all finite convex combinations of elements of $\mathcal S$. 
Let $\|\cdot\|_{\mathrm{TV}}$ denote the total variation distance. Then, it follows that $\mathcal S^{\circ\circ}$ is convex and closed under $\|\cdot\|_{\mathrm{TV}}$-limits. Let us denote $\overline{(\cdot)}^{\mathrm{TV}}$ to mean closure in total variation. Then we have that, $\overline{\operatorname{conv}(\mathcal{S})}^{\mathrm{TV}}\subseteq \mathcal S^{\circ\circ}$.
\end{proposition}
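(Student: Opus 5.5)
We prove the two claims directly from Definition~\ref{def:bipolar}, then combine them with Lemma~\ref{lem:tv-bipolar}; nothing beyond elementary integration is needed, although the conclusion is also consistent with Theorem~3.4 of \cite{larsson2026completecharacterizationtestablehypotheses}.

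\emph{Convexity.} Take $R_1,\dots,R_k\in\mathcal S^{\circ\circ}$ and weights $\lambda_i\ge 0$ with $\sum_i\lambda_i=1$, and set $R=\sum_i\lambda_iR_i$. This is a probability measure. Fix $E\in\mathcal S^\circ$. Since $E\ge 0$, integration against a finite convex combination is linear, even if $E$ takes the value $+\infty$. So
\[
\E_R[E]=\sum_i\lambda_i\E_{R_i}[E]\le\sum_i\lambda_i=1,
\]
and therefore $R\in\mathcal S^{\circ\circ}$.

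\emph{Closure under total-variation limits.} Let $R_m\in\mathcal S^{\circ\circ}$ with $R_m\to R$ in total variation. We repeat the argument of Lemma~\ref{lem:tv-bipolar} with $\mathcal S^{\circ\circ}$ in place of $\mathcal S$. For $E\in\mathcal S^\circ$ and $M\ge1$, the truncation $E\wedge M$ is bounded and measurable, so
\[
\bigl|\E_{R_m}[E\wedge M]-\E_R[E\wedge M]\bigr|\le 2M\|R_m-R\|_{\mathrm{TV}}\to0.
\]
Each $\E_{R_m}[E\wedge M]\le\E_{R_m}[E]\le1$, hence $\E_R[E\wedge M]\le1$. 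Letting $M\to\infty$ and applying monotone convergence gives $\E_R[E]\le1$, so $R\in\mathcal S^{\circ\circ}$.

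\emph{Inclusion.} From $\mathcal S\subseteq\mathcal S^{\circ\circ}$ and convexity, $\operatorname{conv}(\mathcal S)\subseteq\mathcal S^{\circ\circ}$. Now take $S\in\overline{\operatorname{conv}(\mathcal S)}^{\mathrm{TV}}$. Because the TV distance is a metric, there is a sequence in $\operatorname{conv}(\mathcal S)$ converging to $S$ in TV. Applying Lemma~\ref{lem:tv-bipolar} with the nonempty set $\operatorname{conv}(\mathcal S)$ gives $S\in(\operatorname{conv}(\mathcal S))^{\circ\circ}$.

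It remains to check that $(\operatorname{conv}\mathcal S)^{\circ}=\mathcal S^{\circ}$. The inclusion $\supseteq$ holds by linearity of the integral, as in the convexity step. The inclusion $\subseteq$ holds because $\mathcal S\subseteq\operatorname{conv}(\mathcal S)$. Hence the two bipolars coincide, and $S\in\mathcal S^{\circ\circ}$.

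The only point needing care is that the convergence is in total variation rather than weakly. This is what allows us to pass to the limit for merely bounded measurable functions such as $E\wedge M$. Everything else is elementary.
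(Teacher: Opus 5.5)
Your proof is correct, but it takes a more elementary, self-contained route than the paper. The paper gives no argument of its own: it says the proposition is immediate from Lemma~\ref{lem:tv-bipolar} together with Theorem~3.4 of \cite{larsson2026completecharacterizationtestablehypotheses}. Lemma~\ref{lem:tv-bipolar} only covers TV-limits of elements of $\mathcal S$ itself, so the paper is in effect relying on the cited characterization of the bipolar for the facts that $\mathcal S^{\circ\circ}$ itself is convex and TV-closed.

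You instead verify both properties directly from Definition~\ref{def:bipolar}. Convexity follows from linearity of the integral of a nonnegative, possibly infinite-valued, $E$ over a finite mixture. TV-closedness follows by rerunning the truncation and monotone-convergence argument of Lemma~\ref{lem:tv-bipolar}, with the approximating sequence taken in $\mathcal S^{\circ\circ}$ rather than in $\mathcal S$. Your version uses nothing beyond linearity and monotone convergence, which is all this statement needs. The external citation gives a structural description of the effective null, but nothing this particular proposition requires.

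One small streamlining is available. Once you have $\operatorname{conv}(\mathcal S)\subseteq\mathcal S^{\circ\circ}$ and that $\mathcal S^{\circ\circ}$ is closed under TV-limits, the inclusion $\overline{\operatorname{conv}(\mathcal S)}^{\mathrm{TV}}\subseteq\mathcal S^{\circ\circ}$ is immediate. The detour through $(\operatorname{conv}\mathcal S)^{\circ\circ}$ and the identity $(\operatorname{conv}\mathcal S)^{\circ}=\mathcal S^{\circ}$ is correct but unnecessary. Also, with the paper's normalization $\|\mu-\nu\|_{\mathrm{TV}}=\sup_{A}|\mu(A)-\nu(A)|$, your constant $2M$ can be sharpened to $M$ for $0\le E\wedge M\le M$, though $2M$ is of course still valid.
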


Now, we will present the proof of we deferred earlier: non-membership in the bipolar implies a strictly positive $\KL$ gap. 

\begin{proposition}\label{prop:notinbipolar-positive}
Suppose that $(\Omega, \mathcal F)$ is measurable. Let $\mathcal S\subseteq \mathcal M_1(\Omega)$ be nonempty. And let $Q\in\mathcal M_1(\Omega)$. If $Q\notin\mathcal S^{\circ\circ}$, then it follows that $\inf_{R\in\mathcal S^{\circ\circ}}\KL(Q\|R)>0$.   
\end{proposition}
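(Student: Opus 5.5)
The plan is to argue by contradiction. Suppose $\inf_{R\in\mathcal S^{\circ\circ}}\KL(Q\|R)=0$. Then there is a sequence $R_m\in\mathcal S^{\circ\circ}$ with $\KL(Q\|R_m)\to 0$. By Pinsker's inequality, $\|Q-R_m\|_{\mathrm{TV}}\le\sqrt{\KL(Q\|R_m)/2}\to 0$, so $R_m\to Q$ in total variation.

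Next we apply Lemma~\ref{lem:tv-bipolar} with the set $\mathcal S^{\circ\circ}$ in place of $\mathcal S$. This shows that $Q\in(\mathcal S^{\circ\circ})^{\circ\circ}$. It then remains to identify $(\mathcal S^{\circ\circ})^{\circ\circ}$ with $\mathcal S^{\circ\circ}$, which is the standard polar calculus.

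First, $\mathcal S\subseteq\mathcal S^{\circ\circ}$ gives $(\mathcal S^{\circ\circ})^\circ\subseteq\mathcal S^\circ$. Conversely, every $E\in\mathcal S^\circ$ satisfies $\E_R[E]\le 1$ for all $R\in\mathcal S^{\circ\circ}$ by the very definition of the bipolar. Hence $\mathcal S^\circ\subseteq(\mathcal S^{\circ\circ})^\circ$, and the two polars coincide. Taking polars once more gives $(\mathcal S^{\circ\circ})^{\circ\circ}=(\mathcal S^{\circ})^{\circ}$ restricted to probability measures, which is $\mathcal S^{\circ\circ}$.

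An alternative that avoids this identification is to rerun the proof of Lemma~\ref{lem:tv-bipolar} directly. Fix $E\in\mathcal S^\circ$. Each $R_m$ satisfies $\E_{R_m}[E\wedge M]\le 1$. Total variation convergence passes to the limit for bounded $E\wedge M$, giving $\E_Q[E\wedge M]\le 1$, and monotone convergence in $M$ gives $\E_Q[E]\le 1$. So $Q\in\mathcal S^{\circ\circ}$, contradicting the hypothesis.

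The only mildly delicate point is the RIPr subtlety. The paper mentions that the minimizer may be a subprobability measure, but the infimum as written ranges over $R\in\mathcal S^{\circ\circ}\subseteq\mathcal M_1(\Omega)$, so Pinsker applies verbatim.

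An even shorter route is via the duality of Proposition~\ref{prop:infkl-e-power} applied on $(\Omega,\mathcal F)$. If $Q\notin\mathcal S^{\circ\circ}$, pick $E\in\mathcal S^\circ$ with $\E_Q[E]>1$. Then for small $\lambda\in(0,1)$ the mixture $1-\lambda+\lambda E$ lies in $\mathcal S^\circ$. Since $\log$ is concave with derivative $1$ at $1$, the map $\lambda\mapsto\E_Q[\log(1-\lambda+\lambda E)]$ has right-derivative $\E_Q[E]-1>0$ at $0$, so its value is positive for small $\lambda$. However, that duality is stated here only for product classes, and justifying the derivative when $\E_Q[E]=\infty$ needs truncation. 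I would therefore present the Pinsker/total-variation argument as the main proof.
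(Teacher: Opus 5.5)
Your proof is correct and is essentially the paper's argument. Both combine TV-closedness of $\mathcal S^{\circ\circ}$ with Pinsker's inequality; you phrase it through a minimizing sequence $R_m\to Q$ in total variation rather than through a positive TV-distance $\delta$ from $Q$ to $\mathcal S^{\circ\circ}$, and your explicit check that $\mathcal S^{\circ\circ}$ itself is TV-closed (via $(\mathcal S^{\circ\circ})^{\circ}=\mathcal S^{\circ}$, or by rerunning the truncation argument of Lemma~\ref{lem:tv-bipolar}) usefully fills in what the paper delegates to Proposition~\ref{prop:bipolar-tvconv}, since Lemma~\ref{lem:tv-bipolar} as stated only covers limits of sequences drawn from $\mathcal S$.
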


\begin{proof}
Thanks to Proposition~\ref{prop:bipolar-tvconv}, it follows that $\mathcal S^{\circ\circ}$ is closed in total-variation limits. This means that it is a closed subset of the metric space $(\mathcal M_1(\Omega),\|\cdot\|_{\mathrm{TV}})$. By definition, $Q\not\in\mathcal S^{\circ\circ}$, so the $\mathrm{TV}$-distance from $Q$ to $\mathcal S^{\circ\circ}$ is strictly positive. Namely, $\delta:=\inf_{R\in\mathcal S^{\circ\circ}}\|Q-R\|_{\mathrm{TV}}>0$. Therefore by Pinsker's inequality for every $R\in\mathcal S^{\circ\circ}$, $\KL(Q\|R)\ge 2\|Q-R\|_{\mathrm{TV}}^2\ge 2\delta^2$. Then, taking the infimum over $R\in\mathcal S^{\circ\circ}$ proves the claim.
\end{proof}

With all this proved, we are now ready to present our proposition. Specifically we will show failure of Theorem~\ref{thm:main} when the second assumption fails.

\begin{proposition}\label{prop:counterexample}
Let $\X=[0,1]$ with its Borel $\sigma$-algebra and let $Q$ be the uniform distribution on $[0,1]$. For $k\in\mathbb N$, let us define $\varepsilon_k:=2^{-k}$, $A_k:=[0,\varepsilon_k]$, and $a_k:=\exp(-1/\varepsilon_k)=\exp(-2^k)$. Further, define $b_k:=\frac{1-\varepsilon_k a_k}{1-\varepsilon_k}\in(1,\infty)$ and $\frac{dP_k}{dQ}:=a_k\one_{A_k}+b_k\one_{A_k^c}$. Now let $\Pcal:=\{P_k:k\in\mathbb N\}$. Then the following hold.
\begin{enumerate}
    \item First, $\KLinf(Q,\Pcal)>0$. Indeed, notice that $\KLinf(Q,\Pcal)\ge 1+\tfrac12\log\tfrac12>1/2.$
    \item   Second, $\Phi(R):=\inf_{P\in\mathcal P}\KL(R\|P)$ is not lower semicontinuous at $Q$ for the weak topology. Thus, Theorem~\ref{thm:main}'s second assumption fails.
    \item Third, for every $n\ge 1$, we have that $Q^n\in(\Pcal^n)^{\circ\circ}$ and thus $\infKL(Q^n, \Pcal^n)=0$.
\end{enumerate}
As a consequence, the KL rate identity in Theorem~\ref{thm:main} unfortunately fails. That is,
\[
\boxed{
\lim_{n\to\infty}\frac1n\infKL(Q^n,\Pcal^n)=0\quad\text{while}\quad\KL_{\inf}(Q,\Pcal)>0.
}
\]
\end{proposition}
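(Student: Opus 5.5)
The proof handles the three items separately. The workhorse is the observation that $P_k\to Q$ in total variation, while each individual $\KL(Q\|P_k)$ stays bounded away from zero because of the tiny density $a_k$ on $A_k$.

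\emph{Item 1.} First check that $P_k$ is a probability measure: $\varepsilon_k a_k+(1-\varepsilon_k)b_k=1$ by the choice of $b_k$. Also $b_k>1$, since $a_k<1$. Because $Q\ll P_k$ with $dQ/dP_k=1/(a_k\one_{A_k}+b_k\one_{A_k^c})$, we can compute directly:
\[
\KL(Q\|P_k)=\varepsilon_k\log\tfrac1{a_k}-(1-\varepsilon_k)\log b_k = 1-(1-\varepsilon_k)\log b_k .
\]
Here we used $\varepsilon_k\log(1/a_k)=\varepsilon_k\cdot 2^k=1$. Since $b_k\le 1/(1-\varepsilon_k)$, we get $(1-\varepsilon_k)\log b_k\le -x\log x$ with $x=1-\varepsilon_k\in[1/2,1)$. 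The function $-x\log x$ is decreasing on $[1/e,1]$, hence on $[1/2,1)$, so it is at most $\tfrac12\log 2$ there. Therefore $\KL(Q\|P_k)\ge 1-\tfrac12\log 2=1+\tfrac12\log\tfrac12>1/2$ uniformly in $k$. Taking the infimum over $k$ gives item 1.

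\emph{Total variation convergence.} We have
\[
\|P_k-Q\|_{\mathrm{TV}}=\tfrac12\int|a_k\one_{A_k}+b_k\one_{A_k^c}-1|\,dQ=\tfrac12\bigl[\varepsilon_k(1-a_k)+(1-\varepsilon_k)(b_k-1)\bigr]=\varepsilon_k(1-a_k)\to0 .
\]
The last equality holds because $(1-\varepsilon_k)(b_k-1)=\varepsilon_k(1-a_k)$.

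\emph{Item 2.} Total variation convergence implies weak convergence, so $P_k\Rightarrow Q$. Since $P_k\in\Pcal$, we have $\Phi(P_k)=0$ for every $k$. Hence $\liminf_k\Phi(P_k)=0<1/2<\Phi(Q)$, so $\Phi$ is not weakly lower semicontinuous at $Q$.

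\emph{Item 3.} Fix $n$. The standard product bound gives $\|P_k^n-Q^n\|_{\mathrm{TV}}\le n\|P_k-Q\|_{\mathrm{TV}}\to0$. This follows by telescoping through hybrid products, or by coupling the coordinates. Since $P_k^n\in\Pcal^n$, Lemma~\ref{lem:tv-bipolar} with $\mathcal S=\Pcal^n$ yields $Q^n\in(\Pcal^n)^{\circ\circ}$. Then $Q^n$ is feasible in the infimum defining $\infKL(Q^n,\Pcal^n)$, which gives $0\le\infKL(Q^n,\Pcal^n)\le\KL(Q^n\|Q^n)=0$.

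\emph{Boxed conclusion.} Combining items 1 and 3 gives the boxed statement. I do not expect a real obstacle. The only step needing care is the uniform lower bound in item 1, namely controlling $(1-\varepsilon_k)\log b_k$ uniformly in $k$, which the monotonicity of $-x\log x$ on $[1/2,1)$ handles.
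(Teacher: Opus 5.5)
Your proposal is correct and follows the paper's proof essentially step for step. It uses the same total-variation computation $\|P_k-Q\|_{\mathrm{TV}}=\varepsilon_k(1-a_k)$, and the same bound $b_k\le 1/(1-\varepsilon_k)$ reducing item 1 to monotonicity of $x\mapsto -x\log x$ (the paper's $g(\varepsilon)=1+(1-\varepsilon)\log(1-\varepsilon)$ under $x=1-\varepsilon$). Item 3 likewise uses the same product bound $\|P_k^n-Q^n\|_{\mathrm{TV}}\le n\|P_k-Q\|_{\mathrm{TV}}$ fed into Lemma~\ref{lem:tv-bipolar}, and your explicit check that each $P_k$ is a probability measure is a small detail the paper leaves implicit.
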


\begin{proof}
We will begin our proof by first showing that $P_k\Rightarrow Q$ and even $P_k\to Q$ in total variation. Note that $dP_k/dQ$ is $Q$-ae equal to $a_k$ on $A_k$ and $b_k$ on $A_k^c$. Therefore we get that
\begin{align*}
\|P_k-Q\|_{\mathrm{TV}}&=\frac12\int_{[0,1]}\Bigl|\frac{dP_k}{dQ}-1\Bigr|dQ\\
&=\frac12\Bigl(\varepsilon_k(1-a_k)+(1-\varepsilon_k)(b_k-1)\Bigr)\\
&=\frac12\Bigl(\varepsilon_k(1-a_k)+\varepsilon_k(1-a_k)\Bigr) =\varepsilon_k(1-a_k)\le\varepsilon_k\xrightarrow[k\to\infty]{}0.
\end{align*}
Specifically, $P_k\Rightarrow Q$. We now show that $\KLinf(Q,\Pcal)>0$. For each $k$,
\begin{align*}
\KL(Q\|P_k)&=\int\log\Bigl(\frac{dQ}{dP_k}\Bigr)dQ\\
&=\varepsilon_k\log\frac{1}{a_k}+(1-\varepsilon_k)\log\frac{1}{b_k}\\
&=1+(1-\varepsilon_k)\log\Bigl(\frac{1-\varepsilon_k}{1-\varepsilon_k a_k}\Bigr).
\end{align*}
Note that $a_k\le 1$ and because of this, we have that $1-\varepsilon_k a_k\le 1$. Therefore we get that
\[
\log\Bigl(\frac{1-\varepsilon_k}{1-\varepsilon_k a_k}\Bigr)\ge\log(1-\varepsilon_k).
\]
So clearly $\KL(Q\|P_k)\ge 1+(1-\varepsilon_k)\log(1-\varepsilon_k)$. Note that $\varepsilon_k\in(0,1/2]$. And, the function $g(\varepsilon):=1+(1-\varepsilon)\log(1-\varepsilon)$ is decreasing on $(0,1/2]$. Because $g'(\varepsilon)=-\log(1-\varepsilon)-1<0$. Hence it follows that the minimum over $k\in\mathbb N$ is attained at $\varepsilon_1=1/2$. This gives us that $\KLinf(Q,\Pcal)=\inf_{k\in\mathbb N}\KL(Q\|P_k)\ge 1+\frac12\log\frac12 >\frac12$. Hence the $\KLinf$ is greater than $0$. We now prove that assumption two fails. Define the same $\Phi(R):=\inf_{P\in\Pcal}\KL(R\|P)$. For every $k$, choosing $P=P_k$ gives us $\Phi(P_k)=0$. However, $P_k\Rightarrow Q$ while $\Phi(Q)=\KLinf(Q,\Pcal)>0$ as we just showed. So $\Phi$ cannot be lower semicontinuous at $Q$. Let us now finish our proof by showing that $Q^n\in(\Pcal^n)^{\circ\circ}$ and $\infKL(Q^n, \Pcal^n)=0$. Take some arbitrary but particular $n\ge 1$. We have
\[
\|P_k^n-Q^n\|_{\mathrm{TV}}\le 1-(1-\|P_k-Q\|_{\mathrm{TV}})^n \le n\|P_k-Q\|_{\mathrm{TV}} \xrightarrow[k\to\infty]{}0.
\]
So $P_k^n\to Q^n$ in total variation. So we can use Lemma~\ref{lem:tv-bipolar} respectively with $\mathcal S=\Pcal^n$, giving $Q^n\in(\Pcal^n)^{\circ\circ}$. Therefore we get that
\[
\infKL(Q^n,\Pcal^n) =\inf_{R\in(\Pcal^n)^{\circ\circ}}\KL(Q^n\|R) \le \KL(Q^n\|Q^n)=0.
\]
So, we have that $\inf\text{-}\KL(Q^n,\Pcal^n)=0$ for all $n$, hence the rate identity in the theorem will fail.
\end{proof}

We will now revisit the example from \parencite{ram2026powersequentialtestsexist}. We will now make clear why weak lower semicontinuity is sufficient, but not necessary to get our rate identity also.

\begin{proposition}\label{prop:weaklsc-not-necessary}
Let $\X=[0,1]$ on the Borel $\sigma$-algebra. Let $Q=\delta_0$ and for $k\in\mathbb N$, define $P_k:=\frac12\delta_0 +\frac12\delta_{1/k}$. Then let $\Pcal:=\{P_k:k\in\mathbb N\}$. Then,
\begin{enumerate}
    \item $\KLinf(Q,\Pcal)=\log 2>0$.
    \item $\Phi(R)=\inf_{P\in\Pcal}\KL(R\|P)$ is not weakly lower semicontinuous at $Q$.
    \item For every $n$, $\infKL(Q^n, \Pcal^n)=n\log 2$, and  $\lim_{n\to\infty}\frac1n a_n=\log 2 = \KLinf(Q,\Pcal)$.
    \item For every $\alpha\in(0,1)$, there exists a level $\alpha$ sequential test with power-one against $Q$.
    \item There exists an e-process $(E_n)_{n\ge 0}$ on $\bbF$ for $\Pcal$ with exact growth rate $\log 2$ under $Q$.
\end{enumerate}
\end{proposition}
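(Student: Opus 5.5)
The plan is to verify the five items in order, with item 3 doing most of the work and items 4–5 following from it together with earlier results.

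\textbf{Item 1.} Since $Q=\delta_0\ll P_k$ with $dQ/dP_k=2$ at $0$, we get $\KL(Q\|P_k)=\log 2$ for every $k$, so $\KLinf(Q,\Pcal)=\log 2$.

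\textbf{Item 2.} Take $R_k:=\delta_{1/k}\Rightarrow \delta_0=Q$. Then $\Phi(R_k)\le \KL(\delta_{1/k}\|P_k)=\log 2$, which only gives equality, so these $R_k$ do not witness failure. Instead I would use $R_k:=P_k$ itself: $\Phi(P_k)=0$ and $P_k\Rightarrow \tfrac12\delta_0+\tfrac12\delta_0=\delta_0=Q$. Hence $\liminf_k\Phi(R_k)=0<\log 2=\Phi(Q)$, so $\Phi$ is not weakly lower semicontinuous at $Q$.

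\textbf{Item 3.} The upper bound $a_n\le n\KLinf(Q,\Pcal)=n\log 2$ is the universal bound from the proof of Theorem~\ref{thm:main}. For the lower bound, rather than using weak topology, I would use an event directly. Let $A_n:=\{x\in\X^n: x_1=\dots=x_n=0\}$. Then $Q^n(A_n)=1$ and $P_k^n(A_n)=2^{-n}$ for every $k$, so $\sup_{P\in\Pcal}P^n(A_n)=2^{-n}$. By Lemma~\ref{lem:event-sup}, every $R\in(\Pcal^n)^{\circ\circ}$ satisfies $R(A_n)\le 2^{-n}$. Lemma~\ref{lem:binary-dpi} with $p=1$ then gives $\KL(Q^n\|R)\ge \log(1/R(A_n))\ge n\log 2$. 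Hence $a_n=n\log 2$, and the limit is $\log 2$.

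An alternative, and a cleaner route to item 5, is to observe that $E:=2^n\one_{A_n}$ lies in $(\Pcal^n)^\circ$ and achieves $\E_{Q^n}[\log E]=n\log 2$. Together with Proposition~\ref{prop:infkl-e-power}, this gives the same conclusion.

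\textbf{Item 4.} Item 3 gives $a_1=\log 2>0$, so Theorem~\ref{thm:powerone-characterization} (statement 3 implies statement 1) yields power-one tests. An explicit choice is $\tau:=\inf\{n: X_1=\dots=X_n=0,\ n\ge \log_2(1/\alpha)\}$. Under $Q$ this $\tau$ is a finite deterministic time, while under any $P_k$ it satisfies $P_k^\infty(\tau<\infty)\le 2^{-\lceil\log_2(1/\alpha)\rceil}\le\alpha$.

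\textbf{Item 5.} Define $E_n:=\prod_{i=1}^n 2\one_{\{X_i=0\}}$. Each factor has $P_k$-mean $1$ and is independent of $\mathcal F_{i-1}$, so $E$ is a test martingale for every $P\in\Pcal$ on the full filtration $\bbF$, and hence an $e$-process by optional stopping. Under $Q^\infty$ we have $E_n=2^n$ almost surely, giving exact growth rate $\log 2$ both almost surely and in expectation.

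\textbf{Main obstacle.} The main subtlety is item 2: choosing a witness sequence that actually drives $\Phi$ below $\log 2$. The natural choice $\delta_{1/k}$ fails, and the point is to use $P_k$ itself. Everything else is a direct computation with the event $A_n$.
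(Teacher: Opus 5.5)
Your proposal is correct. For item 3 you follow the paper's argument exactly: the event $A_n=\{x_1=\dots=x_n=0\}$, Lemma~\ref{lem:event-sup} giving $R(A_n)\le 2^{-n}$ for every $R\in(\Pcal^n)^{\circ\circ}$, Lemma~\ref{lem:binary-dpi} with $p=1$, and $R=P_k^n$ for the matching upper bound.

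The difference lies in items 1, 2, 4 and 5. The paper gives no argument for these and simply defers to \cite{ram2026powersequentialtestsexist}. Your direct verifications all check out and make the proposition self-contained: the density $dQ/dP_k=2$ on $\{0\}$; the witness $R_k=P_k\Rightarrow Q$ with $\Phi(P_k)=0$; the explicit stopping time, whose null probability is exactly $2^{-\max(1,\lceil\log_2(1/\alpha)\rceil)}\le\alpha$; and the product process.

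In your version a single object does most of the work, namely $E_n=2^n\one_{A_n}=\prod_{i\le n}2\one_{\{X_i=0\}}$.
\begin{itemize}
\item It is a test martingale on $\bbF$ for every $P_k$, which is item 5.
\item Your $\tau$ is precisely its first passage above $1/\alpha$, so its level also follows from Ville's inequality, which gives item 4.
\item Applying Lemma~\ref{lem:eprocess-upper-by-infkl} to this process gives $n\log 2=\E_{Q^\infty}[\log E_n]\le a_n$. That lemma rests only on weak duality, so this lower bound needs neither the strong duality of Proposition~\ref{prop:infkl-e-power} nor Lemma~\ref{lem:event-sup}.
\end{itemize}

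The paper's event-based route certifies the lower bound from the primal side, by bounding the mass every bipolar element can place on $A_n$. Your e-variable route is the corresponding dual certificate. It is the more economical choice here because it delivers items 3--5 at once.
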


\begin{proof}
All other statements save for the third was proven in \parencite{ram2026powersequentialtestsexist}. We will prove that the rate identity still holds, however. Consider some particular $n$ and define $A_n:=\{x^n:x_1=\cdots=x_n=0\}$. It must be the case that $Q^n(A_n)=1$ and for every $k$, $P_k^n(A_n)=\bigl(P_k(\{0\})\bigr)^n=2^{-n}$. Therefore by Lemma~\ref{lem:event-sup} it follows that $\sup_{R\in(\Pcal^n)^{\circ\circ}}R(A_n)=\sup_{P\in\Pcal}P^n(A_n)=2^{-n}$. For any $R\in(\Pcal^n)^{\circ\circ}$, we can write $q:=R(A_n)\le 2^{-n}$ and $p:=Q^n(A_n)=1$ and then apply Lemma~\ref{lem:binary-dpi} to give us that
\[
\KL(Q^n\|R)\ge p\log\frac{p}{q}+(1-p)\log\frac{1-p}{1-q} =\log\frac{1}{q}\ge \log\frac{1}{2^{-n}}=n\log 2.
\]
Therefore, $a_n\ge n\log 2$. Conversely let us choose any $k$ with $R=P_k^n\in\Pcal^n$. Therefore, $a_n\le \KL(Q^n\|P_k^n)=n\KL(Q\|P_k)=n\log 2$. Hence, $a_n=n\log 2$ for every $n$ and thus $\frac1n a_n=\log 2$.
\end{proof}

A natural question is is weak compactness alone enough to give us a non-asymptotic rate identity? Turns out, the answer is no. We will formalize that in the following proposition.

\begin{proposition}\label{prop:weakly-compact-not-enough}
There exists a weakly compact class $\Pcal$ and a probability measure $Q$ such that,
\[
\infKL(Q,\Pcal)<\KLinf(Q,\Pcal).
\]
Therefore, under weak compactness alone the identity $\infKL(Q,\Pcal)=\KLinf(Q,\Pcal)$ is false in general.
\end{proposition}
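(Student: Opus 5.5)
We will exhibit an explicit two-point null, reusing the idea of Example~\ref{ex:onestep-suf}. Take $\X=\{0,1\}$ (a compact metric space with its Borel $\sigma$-field), $\Pcal:=\{\delta_0,\delta_1\}$ and $Q:=\tfrac12\delta_0+\tfrac12\delta_1$. Every finite set of probability measures is weakly compact, so $\Pcal$ is weakly compact. It is not convex, and that is exactly the property Proposition~\ref{prop:infkl-klinf} needs and which we are dropping.

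\emph{The left-hand side.} By Proposition~\ref{prop:bipolar-tvconv}, $\Pcal^{\circ\circ}$ contains $\operatorname{conv}(\Pcal)$. In particular $Q\in\Pcal^{\circ\circ}$. Alternatively, check directly that any $E\in\Pcal^\circ$ has $E(0)\le1$ and $E(1)\le1$, so $\E_Q[E]\le1$. Hence $\infKL(Q,\Pcal)\le\KL(Q\|Q)=0$, and so $\infKL(Q,\Pcal)=0$.

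\emph{The right-hand side.} Since $Q$ is not absolutely continuous with respect to either $\delta_0$ or $\delta_1$ (for instance $Q(\{1\})=\tfrac12$ while $\delta_0(\{1\})=0$), Definition~\ref{def:kl} gives $\KL(Q\|\delta_0)=\KL(Q\|\delta_1)=+\infty$. Therefore $\KLinf(Q,\Pcal)=+\infty>0=\infKL(Q,\Pcal)$, which proves the strict inequality.

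There is no real obstacle. The only point to verify is that the example genuinely satisfies weak compactness, which is immediate for finite sets. If a finite $\KLinf$ were preferred, we would instead take $\Pcal=\{\mathrm{Bern}(0.1),\mathrm{Bern}(0.9)\}$ and $Q=\mathrm{Bern}(1/2)$. Here $Q$ is again a convex combination of the two elements of $\Pcal$, so it lies in the bipolar and $\infKL(Q,\Pcal)=0$, while $\KLinf(Q,\Pcal)=D(1/2\|0.1)>0$.
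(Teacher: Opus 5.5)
Your proof is correct, and your Bernoulli variant is structurally the same witness as the paper's. The paper takes $P_1=(1/4,3/4)$, $P_2=(3/4,1/4)$ and $Q=(1/2,1/2)=\tfrac12P_1+\tfrac12P_2$ on $\{0,1\}$, which gives $\KLinf(Q,\Pcal)=\tfrac12\log\tfrac43$.

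The difference is in how $\infKL(Q,\Pcal)=0$ is established.
\begin{itemize}
\item You argue on the primal side. $Q$ is a convex combination of elements of $\Pcal$, so by linearity of expectation $\E_Q[E]\le 1$ for every $E\in\Pcal^{\circ}$. Hence $Q\in\Pcal^{\circ\circ}$, and $\KL(Q\|Q)=0$ is feasible.
\item The paper argues on the dual side. It combines the strong duality of Proposition~\ref{prop:infkl-e-power} with Theorem~\ref{thm:bounded-representation} to write $\infKL(Q,\Pcal)=\sup_{f\in\mathcal B_b(\X,\B)}\{\E_Q[f]-\Lambda_{\Pcal}(f)\}$. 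It then bounds $\max\{\E_{P_1}[e^f],\E_{P_2}[e^f]\}$ below by the average $\tfrac12(e^{f(0)}+e^{f(1)})$ and applies AM--GM. This gives $\Lambda_{\Pcal}(f)\ge \tfrac12(f(0)+f(1))=\E_Q[f]$ for every bounded $f$.
\end{itemize}

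Your route is more elementary: it needs only the definition of the bipolar and no duality theory. It also generalizes at once: for any finite $\Pcal$ and any $Q\in\operatorname{conv}(\Pcal)\setminus\Pcal$, one gets $\infKL(Q,\Pcal)=0<\KLinf(Q,\Pcal)$. The paper's route instead produces an explicit dual certificate, namely that no $e$-variable has positive $e$-power under $Q$. The paper mentions the convexification mechanism you exploit only informally, after its proof.

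Your Dirac example is also valid for the statement as written, since $0<+\infty$ holds in $[0,\infty]$. There, however, $\KLinf=+\infty$ only because $Q$ is not absolutely continuous with respect to either null element. The Bernoulli variant is the more informative witness: it shows the gap persists between mutually absolutely continuous measures with finite $\KLinf$.
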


\begin{proof}
Consider the two-point space $X=\{0,1\}$ with the discrete $\sigma$-algebra. Define $P_1=(1/4, 3/4)$, $P_2=(3/4, 1/4)$, $\Pcal=\{P_1,P_2\}$, and $Q=(1/2,1/2)$. Clearly, since $\Pcal$ is finite, it must be weakly compact. However, it is not convex. We first compute $\infKL(Q,\Pcal)$. As we already noted, we have in general that
\[
\infKL(Q,\Pcal) = R_{\Pcal}(Q) = \sup_{f\in\mathcal B_b(\X,\B)}L(Q,f).
\]
Because we are on a two-point space, every bounded measurable function $f$ is determined by two real numbers, $a=f(0)$ and $b=f(1)$. Therefore it follows that
\begin{align*}
L(Q,f)&=\E_Q[f]-\Lambda_{\Pcal}(f)\\
&=\frac{a+b}{2}-\log\max\left\{\frac14 e^a+\frac34 e^b,\, \frac34 e^a+\frac14 e^b \right\}.
\end{align*}

Let us upper bound this quantity. The maximum of two numbers is at least their average. Hence,
\begin{align*}
\max\left\{\frac14 e^a+\frac34 e^b,\,\frac34 e^a+\frac14 e^b \right\}
&\ge \frac12 \left[\left(\frac14 e^a+\frac34 e^b\right)+\left(\frac34 e^a+\frac14 e^b\right) \right]\\
&=\frac12(e^a+e^b).
\end{align*}

By the arithmetic-geometric mean inequality,
\[
\frac12(e^a+e^b)\ge\sqrt{e^a e^b}=e^{(a+b)/2}.
\]
Therefore, it follows that
\[
\max\left\{ \frac14 e^a+\frac34 e^b,\, \frac34 e^a+\frac14 e^b \right\} \ge e^{(a+b)/2}.
\]
Taking logarithms, it follows that $\Lambda_{\Pcal}(f)\ge \frac{a+b}{2}$. Hence for every $f\in\mathcal B_b(\X,\B)$, $L(Q,f)\le \frac{a+b}{2}-\frac{a+b}{2}=0$. On the other hand, we can easily get equality here by any constant function (i.e.\, $f\equiv c$) because $L(Q,f)=c-\log(e^c)=0$. Thus, $\sup_{f\in\mathcal B_b(\X,\B)}L(Q,f)=0$. Therefore, \underline{$\infKL(Q,\Pcal)=0$}. We now compute the $\KLinf(Q,\Pcal)$. By symmetry,
\[
\KLinf(Q,\Pcal)=\min\{\KL(Q\|P_1),\KL(Q\|P_2)\}=\KL(Q\|P_1).
\]
Let us compute this. Meaning,
\begin{align*}
\KL(Q\|P_1)&=\frac12\log\frac{1/2}{1/4}+ \frac12\log\frac{1/2}{3/4}\\
&=\frac12\log 2+\frac12\log\frac23\\
&=\frac12\log\frac43\\
&>0.
\end{align*}
Therefore, $\KLinf(Q,\mathcal P)>0$. Thus, even though $\Pcal$ was weakly compact, we have $\infKL(Q,\Pcal)=0$ while $\KLinf(Q,\Pcal)>0$. Hence,
\[
\infKL(Q,\Pcal)<\KLinf(Q,\Pcal).
\]
This completes the proof.
\end{proof}

Notice how the class $\mathcal P=\{P_1, P_2\}$ is not convex and $Q$ lies exactly in between the two nulls, $Q=\frac{1}{2}P_1 + \frac{1}{2}P_2$. Essentially, the bipolar enlargement is convexifying the null and therefore pushing the $\infKL$ down to $0$. On the other hand, the $\KLinf$ still is measuring distance to the original nonconvex class $\{P_1,P_2\}$, and this is strictly positive still. Hence the two conditions that together give $\infKL(Q,\Pcal)=\KLinf(Q,\Pcal)$ are weakly compactness and convexity. Weak compactness alone is not enough in this non-asymptotic case.

\section{Composite Alternatives and the Uniform Growth Rate}\label{sec:compQintrins}
In Section~\ref{sec:point-alternative-infKL}, we considered a particular alternative $Q$. We showed that the intrinsic maximal asymptotic expected log growth rate of an $e$-process against a point alternative $Q$ is inherently dependent on the per-sample $\infKL$ rate. In this section, we will broaden this idea to a composite alternative class $\Qcal\subseteq\mathcal M_1(\X)$. A single $e$-process must be able to achieve this growth simultaneously for every $Q\in\Qcal$. Just to be complete here, we will recall all notation. Throughout this section, $(\X,\B)$ is Polish, $\Pcal\subseteq\mathcal M_1(\X)$ and $\Qcal\subseteq\mathcal M_1(\X)$ are nonempty. And, $\Pcal^n:=\{P^n:P\in\Pcal\}$. Recall that $(\Pcal^n)^{\circ\circ}$ is the bipolar associated with nonnegative tests, as in Definition~\ref{def:bipolar}. For each $Q\in\mathcal M_1(\X)$ and $n\in\mathbb N$ we will use the shorthand $a_n(Q):=\infKL(Q^n,\Pcal^n)=\inf_{R\in(\Pcal^n)^{\circ\circ}}\KL(Q^n\|R)\in[0,\infty]$.

The composite case is much more subtle than the singleton alternative $Q$ case. There are at least two composite extensions of $\infKL$ in this case (two natural counterparts of $a_n(Q)$). The first is the worst-case pointwise reverse-projection, which we define as follows.

\begin{definition}\label{def:an-worstcase-pointwise}
For  $n\in\mathbb N$, we define the \underline{worst-case} pointwise reverse-projection value as
\[
a_n(\Qcal):=\inf_{Q\in\Qcal} a_n(Q)=\inf_{Q\in\Qcal}\inf_{R\in\bPn}\KL(Q^n\|R)\in[0,\infty].
\]
We define its corresponding asymptotic rate as
\[
\overline d_{\mathrm{wc}}:=\limsup_{n\to\infty}\frac{1}{n}a_n(\Qcal)\in[0,\infty].
\]
\end{definition}

One can see that $a_n(\Qcal)$ is an extension of the $\infKL$. It is really measuring the smallest reverse-projection value among all the alternatives. We know that every $e$-process needs to work for the worst alternative, hence it follows that $a_n(\Qcal)$ gives us a universal upper bound on what any single $e$-process may guarantee uniformly over the entire $\Qcal$. But, the key point here is that this may not even be achievable uniformly, hence a loose bound. Intuitively, the $e$-variable optimal for one $Q$ may be worse under another alternative $Q'$. To resolve this, it has been shown that the $\inf_{Q\in\Qcal}$ must be inside the actual $e$-power optimization at each horizon \parencite{grunwald2024}. We will formalize that idea of the robust $e$-power at horizon $n$ with the following definition.

\begin{definition}\label{def:bn-robust-epower}
For $n\in\mathbb N$, let us define
\[
b_n(\Qcal,\Pcal):=\sup_{E\in(\Pcal^n)^\circ}\inf_{Q\in\Qcal} \E_{Q^n}[\log E]\in[0,\infty].
\]
The constant $E\equiv 1$ lies in $(\Pcal^n)^{\circ}$, so $b_n(\Qcal,\Pcal)\ge 0$. We define the corresponding asymptotic robust rate as
\[
\overline d_{\mathrm{rob}}:=\limsup_{n\to\infty}\frac{1}{n}b_n(\Qcal,\Pcal)\in[0,\infty].
\]
\end{definition}

One can interpret $b_n(\Qcal,\Pcal)$ as the value in a finite-horizon zero-sum game. The statistician will choose an $e$-variable $E$; nature will choose $Q\in\Qcal$. The payoff is $\E_{Q^n}[\log E]$. However, with the worst-case term $a_n(\Qcal)$, the order of optimization is switched here. That is, we maximize for each $Q$ and then take the worst $Q$. This is the intuition for why we always have the minimax inequality $b_n(\Qcal,\Pcal)\le a_n(\Qcal)$. And this gap is exactly the difficulty in the composite case, which we will attempt to deal with throughout this section. To begin, we will first formalize the idea that the robust $e$-power is dominated by the worst case $\infKL$.

\begin{lemma}\label{lem:bn-le-an}
For every $n\in\mathbb N$, we have that $b_n(\Qcal,\Pcal)\le a_n(\Qcal)$. And, consequently, we have that $\overline d_{\mathrm{rob}}\le \overline d_{\mathrm{wc}}$.    
\end{lemma}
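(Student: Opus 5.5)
This is the elementary max–min versus min–max inequality, combined with the strong duality of Proposition~\ref{prop:infkl-e-power}. Fix $n\in\mathbb N$ and an arbitrary $E\in(\Pcal^n)^\circ$. For every $Q'\in\Qcal$ we have
\[
\inf_{Q\in\Qcal}\E_{Q^n}[\log E]\le \E_{Q'^n}[\log E]\le \sup_{E'\in(\Pcal^n)^\circ}\E_{Q'^n}[\log E'] = \EPow(Q',\Pcal).
\]
By Proposition~\ref{prop:infkl-e-power}, the right-hand side equals $\inf_{R\in(\Pcal^n)^{\circ\circ}}\KL(Q'^n\|R)=a_n(Q')$.

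Since the left-hand side does not depend on $Q'$, we take the infimum over $Q'\in\Qcal$ on the right and obtain
\[
\inf_{Q\in\Qcal}\E_{Q^n}[\log E]\le a_n(\Qcal).
\]
Taking the supremum over $E\in(\Pcal^n)^\circ$ then gives $b_n(\Qcal,\Pcal)\le a_n(\Qcal)$.

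A few conventions need checking. All quantities lie in $[-\infty,\infty]$, with $\E_{Q^n}[\log E]$ defined via truncation as in Definition~\ref{def:kl}. The inequalities above are order relations in the extended reals, so they remain valid even when some $a_n(Q')=+\infty$ or some $\E_{Q^n}[\log E]=-\infty$. No integrability assumption is needed.

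For the rate statement, divide by $n$ and take $\limsup_{n\to\infty}$ on both sides. Since $\limsup$ is monotone, this yields $\overline d_{\mathrm{rob}}\le \overline d_{\mathrm{wc}}$.

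There is no real obstacle here. The only nontrivial input is the strong duality identity $\EPow(Q,\Pcal)=a_n(Q)$, which the paper takes from \cite{larsson2025numeraire}. In fact only the weak-duality half, $\E_{Q^n}[\log E]\le \KL(Q^n\|R)$ for $E$ in the polar and $R$ in the bipolar, is actually used.
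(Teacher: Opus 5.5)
Your proof is correct and follows the paper's argument essentially line for line: for each fixed $Q$ you bound the inner infimum by $\E_{Q^n}[\log E]$, then use Proposition~\ref{prop:infkl-e-power} to identify $\sup_{E'}\E_{Q^n}[\log E']$ with $a_n(Q)$, and finally take $\inf_Q$, then $\sup_E$, divide by $n$, and pass to the $\limsup$. Your remark that only the weak-duality direction is needed is also right, and that direction is exactly what the proof of Lemma~\ref{lem:eprocess-upper-by-infkl} establishes directly.
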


We will formalize what exactly it means for a single $e$-process to grow uniformly over all $Q\in\Qcal$. Specifically, along those deterministic times where the $e$-process is defined, we will measure the uniformity.

\begin{definition}\label{def:uniform-growth-functional}
Each $W\in\Wcal(\Pcal)$ lies in $\mathcal E_{\bbF_{\bt}}$ for some $t\in\Tdet$. As such, we define the corresponding uniform expected-log growth rate for each $W$ as
\[
\mathsf G(W;\Qcal):=
\limsup_{k\to\infty} \frac{1}{t_k}\inf_{Q\in\Qcal}\E_{Q^\infty}[\log W_{t_k}],\text{ for some }\bt\in\Tdet\text{ such that $W\in\mathcal E_{\bbF_{\bt}}$}.
\]
Here, recall that $\bt$ is the unique block schedule associated with $W$. Given this, we define the intrinsic maximal uniform rate as
\[
\Gamma(\Qcal,\Pcal):=\sup_{W\in\Wcal(\Pcal)}\mathsf G(W;\Qcal) \in[0,\infty].
\]
\end{definition}

We are ready to present our main theorem. The theorem can be seen as the analogue of Theorem~\ref{thm:max-eprocess-rate} for composite alternatives. More importantly, our theorem is completely general, as we do not require compactness, domination, semicontinuity, convexity, or any other such assumption. 

\begin{theorem}\label{thm:composite-maxrate}
Suppose that $\Qcal, \Pcal$ are nonempty. Then it follows that
\[
\boxed{
\Gamma(\Qcal,\Pcal)=\overline d_{\mathrm{rob}}= \limsup_{n\to\infty}\frac{1}{n}\Bigl(\sup_{E\in(\Pcal^n)^\circ}\inf_{Q\in\Qcal}\E_{Q^n}[\log E]\Bigr)
}
\]
\end{theorem}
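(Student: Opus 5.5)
The proof splits into the two inequalities $\Gamma\le\overline d_{\mathrm{rob}}$ (converse) and $\Gamma\ge\overline d_{\mathrm{rob}}$ (achievability), mirroring Theorem~\ref{thm:max-eprocess-rate}.

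\emph{Converse.} Let $W\in\mathcal E_{\bbF_{\bt}}(\Pcal)$ for some $\bt\in\Tdet$. For each fixed $k$, the deterministic time $\tau\equiv t_k$ is a finite $\bbF_{\bt}$-stopping time. Hence $\E_{P^\infty}[W_{t_k}]\le 1$ for every $P\in\Pcal$. Since $W_{t_k}$ is $\mathcal F_{t_k}$-measurable, it is a function of $X_{1:t_k}$, so $\E_{P^{t_k}}[W_{t_k}]=\E_{P^\infty}[W_{t_k}]$. This shows $W_{t_k}\in(\Pcal^{t_k})^\circ$. Likewise $\E_{Q^\infty}[\log W_{t_k}]=\E_{Q^{t_k}}[\log W_{t_k}]$ for each $Q$, which gives
\[
\inf_{Q\in\Qcal}\E_{Q^\infty}[\log W_{t_k}]\le \sup_{E\in(\Pcal^{t_k})^\circ}\inf_{Q\in\Qcal}\E_{Q^{t_k}}[\log E]=b_{t_k}(\Qcal,\Pcal).
\]
Dividing by $t_k$ and taking $\limsup_k$ yields $\mathsf G(W;\Qcal)\le\limsup_k b_{t_k}/t_k\le\overline d_{\mathrm{rob}}$, because $(t_k)$ is a subsequence of $\mathbb N$. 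Taking the supremum over $W$ gives $\Gamma\le\overline d_{\mathrm{rob}}$. No structure on $\Pcal$ or $\Qcal$ is used here.

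\emph{Achievability.} We copy the block construction of Proposition~\ref{prop:achieve-infkl-rate}, with $b_m$ in place of $a_m$. Fix a target $c<\overline d_{\mathrm{rob}}$; if $\overline d_{\mathrm{rob}}=\infty$, let $c$ be arbitrary and large. By the definition of $\limsup$, choose $m$ with $b_m(\Qcal,\Pcal)>mc$. Then pick $E^\star\in(\Pcal^m)^\circ$ with $\inf_{Q\in\Qcal}\E_{Q^m}[\log E^\star]\ge mc$. With $t_k=km$, define $W_{t_k}:=\prod_{j=1}^kE^\star(X_{(j-1)m+1:jm})$. The conditional-expectation computation of Proposition~\ref{prop:achieve-infkl-rate} shows that $W$ is a blockwise test supermartingale for $\Pcal$, hence $W\in\Wcal(\Pcal)$. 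Under each $Q^\infty$ the blocks are i.i.d.\ with law $Q^m$, so $\E_{Q^\infty}[\log W_{t_k}]=k\,\E_{Q^m}[\log E^\star]$. Therefore $\inf_Q\E_{Q^\infty}[\log W_{t_k}]=k\inf_Q\E_{Q^m}[\log E^\star]\ge kmc$, i.e.\ $\mathsf G(W;\Qcal)\ge c$. Letting $c\uparrow\overline d_{\mathrm{rob}}$ proves $\Gamma\ge\overline d_{\mathrm{rob}}$.

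\emph{Main obstacle.} The delicate step is the identity $\E_{Q^\infty}[\log W_{t_k}]=k\,\E_{Q^m}[\log E^\star]$. It needs additivity of expectations of logs, which requires ruling out $\infty-\infty$ under the truncation convention of Definition~\ref{def:kl}. Because $\inf_Q\E_{Q^m}[\log E^\star]\ge mc>-\infty$, for every $Q$ the negative part of $\log E^\star$ is $Q^m$-integrable. The positive parts are nonnegative, so additivity holds in $(-\infty,\infty]$ for each $Q$. The infimum over $Q$ then commutes with the factor $k$, since it is a positive scalar. The key point is that one must take $\inf_Q$ inside each fixed horizon, as $b_m$ does, rather than use the pointwise $a_m(Q)$ optimizers. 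This interchange is the gap that Lemma~\ref{lem:bn-le-an} warns about; the proof avoids it because a single $E^\star$ works uniformly for all $Q\in\Qcal$.
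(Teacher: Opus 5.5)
Your proposal is correct and follows the paper's proof essentially step for step. The converse identifies each $W_{t_k}$ with an element of $(\Pcal^{t_k})^\circ$ and bounds it by $b_{t_k}(\Qcal,\Pcal)$, and achievability repeats a single robust block $e$-variable $E^\star$ with $t_k=km$. The only differences are cosmetic: you merge the paper's two cases ($\overline d_{\mathrm{rob}}=\infty$ versus finite) into one by using a target $c<\overline d_{\mathrm{rob}}$, and you explicitly justify additivity of $\E_{Q^\infty}[\log W_{t_k}]$ through integrability of the negative part of $\log E^\star$, which the paper uses without comment.
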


Note that in full generality we will not claim a relationship between $\Gamma(\Qcal,\Pcal)>0$ and the existence of a uniformly power-one sequential test. We formalize a sufficient condition to do so under weak compactness along with a finite $\Qcal$ corollary. 

\begin{remark}\label{rem:compositealternative-eproc-fulltime-sampletime}
By our zeroing argument in Remark~\ref{rem:blockwise-versus-full-point-alternative}, we can get the same value of $\Gamma(\Qcal,\Pcal)$ if the supremum is taken over only $e$-processes on the original filtration $\bbF$. We only use $\Wcal(\Pcal)$ to keep both $e$-processes on $\bbF$ and blockwise $e$-processes on $\bbF_{\bt}$ in one single notation.
\end{remark}

\begin{proof}[Proof of Theorem~\ref{thm:composite-maxrate}]
We will prove this for wealth processes $W\in\Wcal(\Pcal)$. Thanks to Remark~\ref{rem:blockwise-versus-full-point-alternative}, this is equivalent to using only $e$-processes on $\bbF$. We will prove the upper bound $\Gamma(\Qcal,\Pcal)\le \overline d_{\mathrm{rob}}$. We will then show achievability up to some arbitrary $\varepsilon>0$. We will start with the \underline{upper bound}. Consider any $W\in\Wcal(\Pcal)$. If $W\in\mathcal E_{\bbF}(\Pcal)$, we can use the schedule $t_k=k$. And if $W\in\mathcal E_{\bbF_{\bt}}(\Pcal)$ already, we can use its associated schedule $\bt=(t_k)_{k\ge 0}$. Take some particular $k\ge 1$ and set $n=t_k$. $W_{t_k}$ is $\mathcal F_{t_k}$-measurable and $\mathcal F_{t_k}=\sigma(X_1,\dots,X_n)=\pi_n^{-1}(\B^{\otimes n})$ with $\pi_n(\omega):=(\omega_1,\dots,\omega_n)$. Because of this, there exists a $\B^{\otimes n}$-measurable map $w_k:\X^n\to[0,\infty]$ such that for all $\omega\in\X^{\mathbb N}$, $W_{t_k}(\omega)=w_k(\omega_1,\dots,\omega_n)$. Because $\tau\equiv t_k$ is a $(\mathcal F_{t_k})$-stopping rule, Definition~\ref{def:eprocess} gives for every $P\in\Pcal$, $\E_{P^n}[w_k]=\E_{P^\infty}[W_{t_k}]\le 1$. Therefore by definition of the bipolar, $w_k\in(\Pcal^n)^\circ$. And so by Definition~\ref{def:bn-robust-epower} we have
\[
\inf_{Q\in\Qcal}\E_{Q^n}[\log w_k]\le \sup_{E\in(\Pcal^n)^\circ}\inf_{Q\in\Qcal}\E_{Q^n}[\log E]= b_n(\Qcal,\Pcal).
\]
Note that $\E_{Q^\infty}[\log W_{t_k}]=\E_{Q^n}[\log w_k]$. Thus, for each $k$,
\[
\frac{1}{t_k}\inf_{Q\in\Qcal}\E_{Q^\infty}[\log W_{t_k}] \le \frac{1}{n}b_n(\Qcal,\Pcal)=\frac{1}{t_k}b_{t_k}(\Qcal,\Pcal).
\]
Consequently taking the $\limsup_{k\to\infty}$ gives us
\[
\mathsf{G}(W,t;\Qcal)\le \limsup_{k\to\infty}\frac{1}{t_k}b_{t_k}(\Qcal,\Pcal)\le \limsup_{n\to\infty}\frac{1}{n}b_n(\Qcal,\Pcal) = \overline d_{\mathrm{rob}}.
\]
Taking the supremum over all $e$-processes gives $\Gamma(\Qcal,\Pcal)\le \overline d_{\mathrm{rob}}$. It remains to now prove the \underline{lower bound}; the achievability. We first consider the case where $\overline d_{\mathrm{rob}}=+\infty$. In this case, take some arbitrary $M>0$. By definition of $\limsup$, there exists some $m\in\mathbb N$ with $b_m(\Qcal,\Pcal)\ge mM+1$ that is in $[0,\infty]$. By definition of $b_m(\Qcal,\Pcal)$, there exists some $E^\star\in(\Pcal^m)^\circ$ such that $\inf_{Q\in\Qcal}\E_{Q^m}[\log E^\star]\ge mM$. Hence, let us now repeat this same block construction that we had in Proposition~\ref{prop:achieve-infkl-rate}. Only this time, we will use $E^\star$. Let us also define $t_k:=km$ and
\[
W_{t_k}:=\prod_{j=1}^k E^\star\bigl(X_{(j-1)m+1},\dots,X_{jm}\bigr).
\]
Note that $E^\star\in(\Pcal^m)^\circ$, so we can use our same conditional expectation calculation that we did in Proposition~\ref{prop:achieve-infkl-rate}. If we do this, we can clearly see that $W_{t_k}$ is a $e$-process for $\Pcal$. And moreover, for any $Q\in\Qcal$ because we have independence of blocks under $Q^\infty$ we have that
\[
\E_{Q^\infty}[\log W_{t_k}]=k\E_{Q^m}[\log E^\star]\ge kmM = t_k M.
\]
Therefore, $\mathsf{G}(W,t;\Qcal)\ge M$. And since $M>0$ was arbitrary it follows that $\Gamma(\Qcal,\Pcal)=+\infty=\overline d_{\mathrm{rob}}$ in this case. We will now consider the other case, where $\overline d_{\mathrm{rob}}<\infty$. Take some $\varepsilon>0$. By definition of $\limsup$ we know that there exists some $m\in\mathbb N$ such that
\begin{equation}\label{eq:choose-m-robust}
\frac{1}{m}b_m(\Qcal,\Pcal)\ge \overline d_{\mathrm{rob}}-\frac{\varepsilon}{2}.    
\end{equation}
By definition of $b_m(\Qcal,\Pcal)$, there exists some $E^\star\in(\Pcal^m)^\circ$ such that
\begin{equation}\label{eq:near-opt-robust}
\inf_{Q\in\Qcal}\E_{Q^m}[\log E^\star]\ge b_m(\Qcal,\Pcal)-\frac{\varepsilon m}{2}.
\end{equation}

Let us now build the same product test supermartingale with $t_k=km$ and $W_{t_k}=\prod_{j\le k}E^\star(\text{block }j)$. Then, for each of the $Q\in\Qcal$ we have that
\[
\E_{Q^\infty}[\log W_{t_k}]=k\E_{Q^m}[\log E^\star]\ge k\inf_{Q'\in\Qcal}\E_{Q'^m}[\log E^\star].
\]

If we just take the infimum over $Q\in\Qcal$ on the left hand side and use \eqref{eq:near-opt-robust}, we get
\[
\inf_{Q\in\Qcal}\E_{Q^\infty}[\log W_{t_k}]\ge k\Bigl(b_m(\Qcal,\Pcal)-\frac{\varepsilon m}{2}\Bigr).
\]
Then dividing by $t_k=km$ gives us that
\[
\frac{1}{t_k}\inf_{Q\in\Qcal}\E_{Q^\infty}[\log W_{t_k}] \ge \frac{1}{m}b_m(\Qcal,\Pcal)-\frac{\varepsilon}{2}.
\]

Now, if we take the $\limsup_{k\to\infty}$, the rhs is unchanged. So doing this followed by applying \eqref{eq:choose-m-robust} gives us that $\mathsf{G}(W,t;\Qcal)\ge \overline d_{\mathrm{rob}}-\varepsilon$. Recall $\varepsilon>0$ was arbitrary. Hence, $\Gamma(\Qcal, \Pcal)\ge \overline d_{\mathrm{rob}}$. Together with the upper bound, this tells us that $\Gamma(\Qcal,\Pcal)=\overline d_{\mathrm{rob}}$. And, the inequality $\Gamma(\Qcal,\Pcal)\le \overline d_{\mathrm{wc}}$ is immediate from Lemma~\ref{lem:bn-le-an}. Finally, thanks to Remark~\ref{rem:compositealternative-eproc-fulltime-sampletime}, the supremum will not change if we take it over all $e$-processes rather than the blockwise ones.
\end{proof}

Having shown this, let us explain a little bit on the almost sure growth under each $Q$. In this above achievability construction, take some $m$ and $E^\star$. For each $Q\in\Qcal$ where $\E_{Q^m}[|\log E^\star|]<\infty$, the block increments $\log E^\star(\text{block }j)$ are i.i.d.\ integrable under $Q^\infty$. Therefore, by the strong law we have that with probability 1 under $Q^\infty$,
\[
\frac{1}{t_k}\log W_{t_k} \longrightarrow \frac{1}{m}\E_{Q^m}[\log E^\star].
\]
Thus, whenever we have integrability, the achieved uniform expected-log rate is also realized almost surely under $Q$ along the block times.

\subsection{When do the robust and worst-case pointwise rates actually match?}
Theorem~\ref{thm:composite-maxrate} completely characterizes the maximal expected log growth rate in terms of the robust finite-horizon values $b_n(\Qcal,\Pcal)$. That being said, this $b_n(\Qcal,\Pcal)$ can indeed be arduous to compute. A more explicit and intuitive quantity is the worst-case pointwise reverse-projection $a_n(\Qcal)$ from Definition~\ref{def:an-worstcase-pointwise}. In general, this will only give us an upper bound. Meaning, by Lemma~\ref{lem:bn-le-an}, we have that $b_n\le a_n(\Qcal)$ always. This gap, therefore, is a serious composite-alternative mini-max gap. However, we will give broad conditions under which said gap is going to vanish at the asymptotic per-sample rate level. To begin, we will start with the finite-class case. That is, for finite $\Qcal$, this asymptotic mini-max gap is negligible.

\begin{proposition}\label{prop:finite-Q-gap}
Assume that $\Qcal=\{Q_1,\dots,Q_M\}$ is finite. Then, for every $n\in\mathbb N$ and every $\eta>0$ that, $b_n(\Qcal,\Pcal)\ge a_n(\Qcal)-\log M-\eta$. Consequently,
\[
\limsup_{n\to\infty}\frac{1}{n}b_n(\Qcal,\Pcal) = \limsup_{n\to\infty}\frac{1}{n}a_n(\Qcal).
\]
\end{proposition}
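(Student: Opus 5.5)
The approach is a mixture of near-optimal $e$-variables, one for each alternative. Fix $n$ and $\eta>0$. If $a_n(\Qcal)=0$ the inequality is trivial, because $b_n\ge 0$. Otherwise, for each $i\in\{1,\dots,M\}$, Proposition~\ref{prop:infkl-e-power} gives $a_n(Q_i)=\sup_{E\in(\Pcal^n)^\circ}\E_{Q_i^n}[\log E]$. So we pick $E_i\in(\Pcal^n)^\circ$ with
\[
\E_{Q_i^n}[\log E_i]\ge \min\{a_n(Q_i),K\}-\eta,
\]
where $K$ is a large finite constant. We need $K$ only to handle $a_n(Q_i)=\infty$; if $a_n(\Qcal)=\infty$ we let $K\to\infty$ at the end and conclude $b_n=\infty$.

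Next we form the uniform mixture $\bar E:=\frac1M\sum_{i=1}^M E_i$. Since the polar is convex, $\E_{P^n}[\bar E]=\frac1M\sum_i\E_{P^n}[E_i]\le 1$ for every $P\in\Pcal$, so $\bar E\in(\Pcal^n)^\circ$. Nonnegativity gives the pointwise bound $\bar E\ge \frac1M E_i$ for each $i$. Hence
\[
\log\bar E\ge \log E_i-\log M,
\]
and therefore
\[
\E_{Q_i^n}[\log\bar E]\ge \min\{a_n(Q_i),K\}-\eta-\log M\ge \min\{a_n(\Qcal),K\}-\log M-\eta.
\]
Here we should check that the truncated-log convention of Definition~\ref{def:kl} respects pointwise monotonicity and additive shifts by the constant $\log M$; it does, since $\log(z\wedge C)$ is monotone in $z$. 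Taking the infimum over $i$ and then the supremum over the polar yields $b_n(\Qcal,\Pcal)\ge a_n(\Qcal)-\log M-\eta$, after letting $K\to\infty$ when needed.

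For the rate statement, we divide by $n$ and take $\limsup$. The term $\log M/n$ vanishes, so $\overline d_{\mathrm{rob}}\ge\overline d_{\mathrm{wc}}$. The reverse inequality is Lemma~\ref{lem:bn-le-an}.

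The only delicate point is the bookkeeping of infinite values, namely $a_n(Q_i)=\infty$ or $\E[\log E_i]=\infty$. The truncation by $K$ handles this; the rest is routine.
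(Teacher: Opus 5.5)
Your proof is correct and follows essentially the same route as the paper: a uniform mixture $\frac1M\sum_i E_i$ of near-optimal $e$-variables, one for each $Q_i$, which lies in $(\Pcal^n)^\circ$ by linearity and loses at most $\log M$ pointwise in log, with Lemma~\ref{lem:bn-le-an} supplying the reverse rate inequality. The only difference is bookkeeping: you absorb the case $a_n(\Qcal)=\infty$ through the truncation $\min\{a_n(Q_i),K\}$ and also check the shift-invariance of the truncated-log convention, whereas the paper splits into the two cases $a_n(\Qcal)=+\infty$ and $a_n(\Qcal)<\infty$.
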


\begin{proof}
Take some particular but arbitrarily chosen $n\in\mathbb N$ throughout this proof. The first case we will consider is the case where \underline{$a_n(\Qcal)=+\infty$}. Clearly, then $a_n(Q_i)=+\infty$ for every $i\in\{1,\dots, M\}$. Now, let us fix some $L>0$. For each $i$, since $\sup_{E\in(\Pcal^n)^\circ}\E_{Q_i^n}[\log E]=+\infty$, we may choose some $E_i\in(\Pcal^n)^\circ$ such that
\[
\E_{Q_i^n}[\log E_i]\ge L+\log M.
\]
Now let $E:=\frac1M\sum_{i=1}^M E_i$. By linearity of expectation under each $P^n$, it follows that $E\in(\Pcal^n)^\circ$. In addition, we have that $E\ge \frac1M E_i$ pointwise. So $\log E\ge \log E_i-\log M$ pointwise. Therefore, for each $i$ we get that
\[
\E_{Q_i^n}[\log E] \ge \E_{Q_i^n}[\log E_i]-\log M \ge L.
\]
So, $\inf_{Q\in\Qcal}\E_{Q^n}[\log E]\ge L$. Hence, $b_n(\Qcal,\Pcal)\ge L$. But, $L$ was arbitrary, which means that $b_n(\Qcal,\Pcal)=+\infty$, and so therefore the claim inequality is trivial. It remains to now prove the case where \underline{$a_n(\Qcal)<\infty$}. Let $d:=a_n(\Qcal)=\min_{1\le i\le M}a_n(Q_i)$ and take some particular $\eta>0$. For each $i$, $a_n(Q_i)\ge d$. Therefore by the definition of the supremum, there must exist $E_i\in(\Pcal^n)^\circ$ such that
\[
\E_{Q_i^n}[\log E_i]\ge d-\eta.
\]
Now define $E:=\frac1M\sum_{i=1}^M E_i\in(\Pcal^n)^\circ$. As we already argued in the first case, we have that $\log E\ge \log E_i-\log M$ pointwise. Hence for each $i$ that we have that
\[
\E_{Q_i^n}[\log E]\ge \E_{Q_i^n}[\log E_i]-\log M\ge d-\eta-\log M.
\]
Now if we take the $\inf_{i\le M}$ we get that $\inf_{Q\in\Qcal}\E_{Q^n}[\log E]\ge a_n(\Qcal)-\log M-\eta$. Then, finally to conclude we can just take the $\sup_{E\in(\Pcal^n)^\circ}$ over the left hand side, giving $b_n(\Qcal,\Pcal)\ge a_n(\Qcal)-\log M-\eta$.
\end{proof}

Notice how in this above proposition we have shown that not knowing which $Q\in\Qcal$ holds creates a cost at most an additive $\log M$ in the finite horizon. And, this disappears in such per-sample asymptotics. However, the question becomes what about in the case of possibly uncountable $\Qcal$. That will be the motivation for our following theorem under which a sub-exponential complexity covering condition makes the same conclusion hold. In other words, if we have subexponential uniform approximability, that will imply that $b_n$ and $a_n(\Qcal)$ will match asymptotically.

\begin{theorem}\label{thm:subexp-cover}
Assume that there exist sequences $(\Delta_n)_{n\ge 1}\subset[0,\infty)$ and integers $(N_n)_{n\ge 1}$ such that the following hold.
\begin{enumerate}
    \item For each $n$, there exist $E_{n,1},\dots,E_{n,N_n}\in(\Pcal^n)^\circ$ with the uniform approximation property; meaning, that,
    \[
    \forall Q\in\Qcal,\quad\exists j\in\{1,\dots,N_n\}\quad\text{s.t.}\quad \E_{Q^n}[\log E_{n,j}] \ge a_n(Q)-\Delta_n.
    \]
    \item Second, the approximation error and complexity are sublinear. That is,
    \[
    \frac{\Delta_n}{n}\longrightarrow 0 \qquad\text{and}\qquad \frac{\log N_n}{n}\longrightarrow 0.
    \]
\end{enumerate}
Then it once again follows that,
\[
\limsup_{n\to\infty}\frac{1}{n}b_n(\Qcal,\Pcal) = \limsup_{n\to\infty}\frac{1}{n}a_n(\Qcal)
\]
\end{theorem}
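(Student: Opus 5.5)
The plan is to prove the two inequalities separately. The upper bound $\limsup_n b_n/n\le\limsup_n a_n(\Qcal)/n$ is immediate from Lemma~\ref{lem:bn-le-an}, since $b_n(\Qcal,\Pcal)\le a_n(\Qcal)$ for every $n$. For the reverse inequality, I would show the finite-horizon bound
\[
b_n(\Qcal,\Pcal)\ \ge\ a_n(\Qcal)-\Delta_n-\log N_n
\]
for every $n$. Dividing by $n$ and taking $\limsup$, hypothesis (2) removes the two error terms. This is exactly the mixture argument of Proposition~\ref{prop:finite-Q-gap}, with the finite class $\Qcal$ replaced by the finite cover $\{E_{n,1},\dots,E_{n,N_n}\}$.

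Fix $n$ and set $E_n:=\frac{1}{N_n}\sum_{j=1}^{N_n}E_{n,j}$. Since each $E_{n,j}\in(\Pcal^n)^\circ$, linearity gives $\E_{P^n}[E_n]\le 1$ for every $P\in\Pcal$, so $E_n\in(\Pcal^n)^\circ$. Pointwise we have $E_n\ge E_{n,j}/N_n$ for each $j$, hence $\log E_n\ge \log E_{n,j}-\log N_n$. Now take any $Q\in\Qcal$ and let $j=j(Q)$ be the index given by hypothesis (1). Then
\[
\E_{Q^n}[\log E_n]\ \ge\ \E_{Q^n}[\log E_{n,j}]-\log N_n\ \ge\ a_n(Q)-\Delta_n-\log N_n\ \ge\ a_n(\Qcal)-\Delta_n-\log N_n.
\]
Taking the infimum over $Q$ and then the supremum over $(\Pcal^n)^\circ$ gives the finite-horizon bound.

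The one point that needs care, and the only real obstacle, is the handling of infinite values and of the convention for $\E[\log\cdot]$ from Definition~\ref{def:kl}. The step $\E_{Q^n}[\log E_n]\ge \E_{Q^n}[\log E_{n,j}]-\log N_n$ passes through the truncated definition $\sup_M\int\log(Z\wedge M)$. Monotonicity holds because $\log(E_n\wedge M)\ge\log((E_{n,j}/N_n)\wedge M)$. Shifting by the constant $\log N_n$ commutes with the supremum over $M$ once we compare $(E_{n,j}/N_n)\wedge M$ with $(E_{n,j}\wedge N_nM)/N_n$, so this works. If $a_n(Q)=+\infty$ for some $Q$, the hypothesis says $\E_{Q^n}[\log E_{n,j}]=+\infty$, and the same chain still applies in $[-\infty,\infty]$ arithmetic, since $\Delta_n$ and $\log N_n$ are finite.

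For the limit, if $\limsup a_n(\Qcal)/n=+\infty$, then the bound $b_n/n\ge a_n(\Qcal)/n-o(1)$ forces $\limsup b_n/n=+\infty$. Otherwise, $\limsup(x_n-y_n)=\limsup x_n$ whenever $y_n\to0$. This yields $\limsup b_n/n\ge\limsup a_n(\Qcal)/n$, which completes the equality.
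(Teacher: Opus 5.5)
Your proposal is correct and matches the paper's proof essentially step for step. Both use the uniform mixture $E_n=\frac1{N_n}\sum_j E_{n,j}$, the pointwise bound $\log E_n\ge\log E_{n,j(Q)}-\log N_n$, the resulting inequality $b_n(\Qcal,\Pcal)\ge a_n(\Qcal)-\Delta_n-\log N_n$, and the upper bound from Lemma~\ref{lem:bn-le-an}; your extra handling of the truncated-log convention and of infinite values is more careful than the paper's argument but not a different route.
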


\begin{proof}
Consider some $n\in\mathbb N$ and defining the uniform mixture, $E_n:=\frac{1}{N_n}\sum_{j=1}^{N_n} E_{n,j}$. The same linearity argument that we did in the Proposition~\ref{prop:finite-Q-gap} gives $E_n\in(\Pcal^n)^\circ$. Now, consider any $Q\in\Qcal$ and choose an index $j(Q)\in\{1,\dots,N_n\}$ which satisfies the approximation property. Pointwise, we have $E_n\ge \frac{1}{N_n}E_{n,j(Q)}$. Therefore, $\log E_n\ge \log E_{n,j(Q)}-\log N_n$. Hence,
\[
\E_{Q^n}[\log E_n]\ge \E_{Q^n}[\log E_{n,j(Q)}]-\log N_n\ge a_n(Q)-\Delta_n-\log N_n.
\]
Taking the $\inf_{Q\in\Qcal}$, we get
\[
\inf_{Q\in\Qcal}\E_{Q^n}[\log E_n] \ge \inf_{Q\in\Qcal}a_n(Q)-\Delta_n-\log N_n= a_n(\Qcal)-\Delta_n-\log N_n.
\]
Necessarily $E_n\in(\Pcal^n)^\circ$. So the definition of $b_n(\Qcal,\Pcal)$ tells us that
\[
b_n(\Qcal,\Pcal) \ge a_n(\Qcal)-\Delta_n-\log N_n.
\]

All we need to do now is take this result together with the universal upper bound, that $b_n(\Qcal,\Pcal)\le a_n(\Qcal)$ from Lemma~\ref{lem:bn-le-an}. Let us do this, divide by $n$ and take the $\limsup_{n\to\infty}$. By assumption, we have that $\Delta_n=o(n)$ and $\log N_n=o(n)$. This thereby forces the lower bound to match the upper bound in the limsup, which proves our claim.
\end{proof}

Theorem~\ref{thm:subexp-cover} has a small requirement that at a horizon $n$, we should be able to find a subexponential family of $e$-variables which have expected log-payoff that approximates the individually optimal payoff $a_n(Q)$ up to $o(n)$, but uniformly over all the $Q\in\Qcal$. This is the smallest way we can give substance to the idea that $\Qcal$ not be too large from the perspective of uniform log-growth. If $N_n$ is exponentially large, then the mixture penalty $\log N_n$ will be linear in $n$. As a consequence, we will have a genuine rate gap. However, we once again emphasize that we can analyze the intrinsic uniform rate without any assumptions through the robust horizon values $b_n(\Qcal,\Pcal)$.

\subsection{When will the Intrinsic Uniform Growth Rate be Positive?}
Let us state what we can get from the fact that these pointwise $\KL$ quantities are positive. Before getting there, note that $\Gamma(\Qcal,\Pcal)$ depends on the robust finite-horizon values $b_n(\Qcal,\Pcal)$. Throughout this section, note that all statements also apply for $\bbF$-$e$-processes. On the other hand, the items we will expound on in this section will be those build from the pointwise values,
\[
a_n(Q):=\inf_{R\in(\Pcal^n)^{\circ\circ}}\KL(Q^n\|R),
\]
where $Q\in\mathcal M_1(\X)$. In full generality we can only get necessary and sufficient statements with additional assumptions. However, we will give some necessary implications also, which hold in full generality. For each $Q\in\mathcal M_1(\X)$, recall that we showed the sequence $(a_n(Q))_{n\ge 1}$ is superadditive. So, as in Lemma~\ref{lem:an-superadditive-sec}, we may define the asymptotic pointwise rate as
\[
d_{\star}(Q,\Pcal):=\lim_{n\to\infty}\frac1 n a_n(Q)= \sup_{n\ge 1}\frac1n a_n(Q)\in[0,\infty],
\]
where the limit exists by Lemma~\ref{lem:an-superadditive-sec}. Therefore, in full generality, it will always hold that for any arbitrary $\Qcal,\Pcal$,
\begin{equation}\label{eq:eprocpos-nec}
\boxed{
\Gamma(\Qcal,\Pcal)>0\Longrightarrow  \inf_{Q\in\Qcal} \lim_{n\to\infty}\frac1n \KLinf(Q^n,(\Pcal^n)^{\circ\circ}) > 0 \Longrightarrow  \inf_{Q\in\Qcal} \KLinf(Q,\Pcal) > 0.
}
\end{equation}
We will show that this holds in Theorem~\ref{thm:gamma-positive-necessary}. The converses are in general not true without additional structure. However, as an example, suppose that $\Qcal$ is finite and that $\Phi(R)$ is weakly lower semicontinuous at $Q$. Then, we will have that (which will be proven in Corollary~\ref{cor:gamma-positive-finiteQ-wlsc}),
\begin{equation}\label{eq:necsufeprocpos}
\boxed{
\Gamma(\Qcal,\Pcal)>0\Longleftrightarrow  \inf_{Q\in\Qcal} \lim_{n\to\infty}\frac1n \KLinf(Q^n,(\Pcal^n)^{\circ\circ}) > 0 \Longleftrightarrow  \inf_{Q\in\Qcal} \KLinf(Q,\Pcal) > 0.
}
\end{equation}
We are now ready to present our theorem on necessary positivity implications. These will be fully general and consequently prove \eqref{eq:eprocpos-nec}.

\begin{theorem}\label{thm:gamma-positive-necessary}
Suppose $\Qcal,\Pcal$ are nonempty. Then, $\Gamma(\Qcal,\Pcal)\le \inf_{Q\in\Qcal}d_{\star}(Q,\Pcal)$. This means that if $\Gamma(\Qcal,\Pcal)>0$, then $\inf_{Q\in\Qcal}d_{\star}(Q,\Pcal)>0$.  In addition, for every $Q\in\mathcal M_1(\X)$, $d_{\star}(Q,\Pcal)\le \KLinf(Q,\Pcal)$. Hence, $\Gamma(\Qcal,\Pcal)>0$ also implies that $\inf_{Q\in\Qcal}\KLinf(Q,\Pcal)>0$.
\end{theorem}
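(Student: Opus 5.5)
The plan is to chain three inequalities, each of which follows from results already established: $\Gamma(\Qcal,\Pcal)\le \overline d_{\mathrm{rob}}\le \overline d_{\mathrm{wc}}\le \inf_{Q\in\Qcal} d_\star(Q,\Pcal)$, and then $d_\star(Q,\Pcal)\le \KLinf(Q,\Pcal)$ for each $Q$.

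For the first part, I would invoke Theorem~\ref{thm:composite-maxrate}, which gives $\Gamma(\Qcal,\Pcal)=\overline d_{\mathrm{rob}}$. Lemma~\ref{lem:bn-le-an} then gives $\overline d_{\mathrm{rob}}\le \overline d_{\mathrm{wc}}=\limsup_n \frac1n\inf_{Q\in\Qcal}a_n(Q)$. For any fixed $Q_0\in\Qcal$ and every $n$ we have $\frac1n\inf_{Q\in\Qcal}a_n(Q)\le \frac1n a_n(Q_0)$. Taking $\limsup_n$ on both sides, and using Lemma~\ref{lem:an-superadditive-sec} to replace the limsup on the right by the genuine limit $d_\star(Q_0,\Pcal)$, yields $\overline d_{\mathrm{wc}}\le d_\star(Q_0,\Pcal)$. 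Taking the infimum over $Q_0\in\Qcal$ gives $\Gamma(\Qcal,\Pcal)\le\inf_{Q\in\Qcal}d_\star(Q,\Pcal)$.

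Alternatively, one could bypass Theorem~\ref{thm:composite-maxrate}. For any $W\in\Wcal(\Pcal)$ and any $Q_0\in\Qcal$, one has $\frac{1}{t_k}\inf_Q\E_{Q^\infty}[\log W_{t_k}]\le \frac1{t_k}\E_{Q_0^\infty}[\log W_{t_k}]\le a_{t_k}(Q_0)/t_k$ by Lemma~\ref{lem:eprocess-upper-by-infkl}. Taking $\limsup_k$ and using that $a_n(Q_0)/n$ converges gives $\mathsf G(W;\Qcal)\le d_\star(Q_0,\Pcal)$. This route is cleaner and avoids any issue with limsup along subsequences, so I would present it as the main argument.

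For the second part, I would reuse the universal upper bound from the proof of Theorem~\ref{thm:main}, which requires no assumptions. Since $\Pcal^n\subseteq(\Pcal^n)^{\circ\circ}$, we have $a_n(Q)\le\inf_{P\in\Pcal}\KL(Q^n\|P^n)=n\KLinf(Q,\Pcal)$. This uses tensorization of KL, including the convention that both sides equal $+\infty$ when $Q\not\ll P$. Dividing by $n$ and letting $n\to\infty$ gives $d_\star(Q,\Pcal)\le\KLinf(Q,\Pcal)$.

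The positivity implications then follow directly. If $\Gamma>0$, then $\inf_Q d_\star>0$, and since $d_\star\le\KLinf$ pointwise, $\inf_Q\KLinf\ge\inf_Q d_\star>0$. No step here is a real obstacle. The only points needing care are the extended-value cases: infinite $a_n$, and tensorization when $Q\not\ll P$, where $Q^n\not\ll P^n$ and both sides are $+\infty$.
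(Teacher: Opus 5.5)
Your proposal is correct, and the route you choose as your main argument is essentially the paper's proof. The paper bounds $\E_{Q^\infty}[\log W_{t_k}]\le a_{t_k}(Q)$ for each fixed $Q\in\Qcal$ via Proposition~\ref{prop:infkl-e-power}, which is the content of Lemma~\ref{lem:eprocess-upper-by-infkl}. It then passes to $\limsup_k a_{t_k}(Q)/t_k\le d_\star(Q,\Pcal)$ using the limit from Lemma~\ref{lem:an-superadditive-sec}, and gets $d_\star\le\KLinf$ from $\Pcal^n\subseteq(\Pcal^n)^{\circ\circ}$ plus tensorization, exactly as you do. Your alternative chain through Theorem~\ref{thm:composite-maxrate} and Lemma~\ref{lem:bn-le-an} is also valid and non-circular, just less direct.
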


\begin{proof}
Consider any $e$-process $(W_{t_k})_{k\ge 0}$ for $\Pcal$ along deterministic sample sizes $t_k$. Take some $Q\in\Qcal$. For each $k$, set $n:=t_k$. Since $W_{t_k}$ is $\mathcal F_{t_k}$-measurable, there exists a $B^{\otimes n}$-measurable function $w_k:\X^n\mapsto[0,\infty]$ such that $W_{t_k}(\omega)=w_k(\omega_1,\dots,\omega_n)$. Each deterministic $t_k$ is a stopping time, hence by the definition of $e$-process, for every $P\in\Pcal$, $\E_{P^\infty}[W_{t_k}]\le 1$. However, since $W_{t_k}=w_k(X_1,\dots,X_n)$, it follows that this is equivalently $\E_{P^n}[w_k]\le 1$ for all $P\in\Pcal$, so we must have $w_k\in(\Pcal^n)^{\circ}$. Therefore, thanks to Proposition~\ref{prop:infkl-e-power} at the horizon $n=t_k$ we get
\begin{equation}\label{eq:Wk-upper-akQ}
\E_{Q^\infty}[\log W_{t_k}]=\E_{Q^n}[\log w_k]\le \sup_{E\in(\Pcal^n)^{\circ}}\E_{Q^n}[\log E]=a_n(Q)=a_{t_k}(Q).    
\end{equation}
Therefore it follows that
\begin{align}
\mathsf G(W,t;Q)&=\limsup_{k\to\infty}\frac{1}{t_k}\inf_{Q'\in\Qcal}\E_{Q'^{\infty}}[\log W_{t_k}]\notag\\
&\le \limsup_{k\to\infty}\frac{1}{t_k}\E_{Q^\infty}[\log W_{t_k}]\label{eq:G-upper1}\\
&\le \limsup_{k\to\infty}\frac{1}{t_k}a_{t_k}(Q)\label{eq:G-upper2}\\
&\le \lim_{n\to\infty}\frac1n a_n(Q)\label{eq:G-upper3}\\
&=d_{\star}(Q,\Pcal)\notag.
\end{align}
Here, \eqref{eq:G-upper1} is obvious. From \eqref{eq:Wk-upper-akQ}, \eqref{eq:G-upper2} is immediate. Lastly, \eqref{eq:G-upper3} holds because a limsup along some subsequence $(t_k)$ is bounded by the full limit of $a_n(Q)/n$. Now this bounds holds for any $Q\in\Qcal$, so it follows that $\mathsf{G}(W,t;\Qcal)\le \inf_{Q\in\Qcal}d_{\star}(Q,\Pcal)$. Taking the supremum over all $e$-processes $W_{t_k}$ gives us that
\[
\Gamma(\Qcal,\Pcal)\le \inf_{Q\in\Qcal}d_{\star}(Q,\Pcal).
\]
So our first claim is proved. It remains for us to show that for each $Q$, $d_{\star}(Q,\Pcal)\le \KLinf(Q,\Pcal)$. For every $n\in\mathbb N$, we have that
\[
a_n(Q)=\inf_{R\in(\Pcal^n)^{\circ\circ}}\KL(Q^n\|R)\le \inf_{P\in\Pcal}\KL(Q^n\|P^n)=\inf_{P\in\Pcal}n\KL(Q\|P)=n\KLinf(Q,\Pcal).
\]
Dividing this by $n$ and letting $n\to\infty$ gives $d_{\star}(Q,\Pcal)\le \KLinf(Q,\Pcal)$. Taking the infimum over all $Q\in\Qcal$ completes the proof.
\end{proof}

As we mentioned, to get sufficient conditions, there needs to be more structure. One immediate one is if we suppose that there is no asymptotic minimax gap. Because the robust rate is the best achievable growth rate of the $e$-process, implying that this growth rate will also be equal to worst-case rate if there is no asymptotic gap. Then, one-step bipolar positivity becomes sufficient. We will formalize that idea as follows. Recall that $a_n(\Qcal):=\inf_{Q\in\Qcal}a_n(Q)$.

\begin{corollary}\label{cor:bipolar-one-step-positive}
Assume in addition that
\[
\Gamma(\Qcal,\Pcal)=\overline d_{\mathrm{wc}}:=\limsup_{n\to\infty}\frac1n a_n(\Qcal).
\]
This will hold for example, under Proposition~\ref{prop:finite-Q-gap}. And it will also hold under any assumption that gives $\overline d_{\mathrm{rob}}=\overline d_{\mathrm{wc}}$. 
Then it follows that $\inf_{Q\in\Qcal}\KLinf(Q,\Pcal^{{\circ}{\circ}})>0$ implies that $\Gamma(\Qcal,\Pcal)>0$. And in particular, all the single directions in \eqref{eq:eprocpos-nec} become double directions, i.e.\ both necessary and sufficient as in \eqref{eq:necsufeprocpos}.
\end{corollary}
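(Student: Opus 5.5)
The first claim follows directly from the superadditivity in Lemma~\ref{lem:an-superadditive-sec}. Here $\KLinf(Q,\Pcal^{\circ\circ})$ is exactly $a_1(Q)=\inf_{R\in\Pcal^{\circ\circ}}\KL(Q\|R)$. Set $c:=\inf_{Q\in\Qcal}a_1(Q)>0$.

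\textbf{Step 1: $c>0$ forces $\Gamma>0$.} Superadditivity gives, by induction, $a_n(Q)\ge n\,a_1(Q)\ge nc$ for every $Q\in\Qcal$ and every $n$. Taking the infimum over $Q$ gives $a_n(\Qcal)\ge nc$ for all $n$, hence $\overline d_{\mathrm{wc}}\ge c>0$. By the standing assumption $\Gamma(\Qcal,\Pcal)=\overline d_{\mathrm{wc}}$, we get $\Gamma(\Qcal,\Pcal)\ge c>0$.

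The hypothesis $\Gamma=\overline d_{\mathrm{wc}}$ holds in the cited cases: for finite $\Qcal$, Proposition~\ref{prop:finite-Q-gap} gives $\overline d_{\mathrm{rob}}=\overline d_{\mathrm{wc}}$, and Theorem~\ref{thm:composite-maxrate} gives $\Gamma=\overline d_{\mathrm{rob}}$. The same two results cover any setting where $\overline d_{\mathrm{rob}}=\overline d_{\mathrm{wc}}$.

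\textbf{Step 2: the forward implications.} For the two-sided version \eqref{eq:necsufeprocpos}, I work in the setting where it was announced: $\Qcal$ finite and $\Phi$ weakly l.s.c.\ at each $Q\in\Qcal$. The forward implications are exactly Theorem~\ref{thm:gamma-positive-necessary}.

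\textbf{Step 3: reversing the implications.} I close the cycle $\inf_Q\KLinf(Q,\Pcal)>0\Rightarrow\inf_Q d_\star(Q,\Pcal)>0\Rightarrow\Gamma>0$.

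For the second arrow, finiteness of $\Qcal$ lets me swap infimum and limit. Each $a_n(Q)/n\to d_\star(Q)$, so $\min_{Q\in\Qcal}a_n(Q)/n\to\min_{Q}d_\star(Q)$. Hence $\overline d_{\mathrm{wc}}=\min_Q d_\star(Q)>0$, and the standing assumption gives $\Gamma>0$.

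For the first arrow, fix $Q$ with $\KLinf(Q,\Pcal)>0$.
\begin{itemize}
\item If $\KLinf(Q,\Pcal)<\infty$, Theorem~\ref{thm:main} gives $d_\star(Q)=\KLinf(Q,\Pcal)>0$.
\item If $\KLinf(Q,\Pcal)=\infty$, Theorem~\ref{thm:main} does not apply as stated. I rerun its lower-bound argument with $d$ replaced by an arbitrary finite $d'>0$. Lower semicontinuity of $\Phi$ at $Q$ still yields a weak neighborhood on which $\Phi\ge d'$. The convex closed bounded-Lipschitz ball inside it, together with Lemma~\ref{lem:convex-empirical}, Lemma~\ref{lem:event-sup} and Lemma~\ref{lem:binary-dpi}, gives $\liminf_n a_n(Q)/n\ge d'$. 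Hence $d_\star(Q)\ge d'>0$.
\end{itemize}
Taking the minimum over the finitely many $Q$ gives $\inf_Q d_\star(Q)>0$.

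\textbf{Main obstacle.} The delicate points are the two places where generality breaks. The interchange of $\inf_Q$ and $\lim_n$ in Step 3 genuinely needs finiteness, or the no-gap assumption combined with the $a_1$ bound of Step 1. The $\KLinf=\infty$ edge case needs the modified rerun of Theorem~\ref{thm:main}. Everything else is bookkeeping with earlier results.
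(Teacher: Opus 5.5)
Your Step~1 is exactly the paper's proof: identify $\KLinf(Q,\Pcal^{\circ\circ})=a_1(Q)$, iterate superadditivity to get $a_n(\Qcal)\ge nc$, and conclude via $\Gamma(\Qcal,\Pcal)=\overline d_{\mathrm{wc}}\ge c$. The paper does not separately argue the ``in particular'' clause. Your Steps~2--3 essentially reprove Corollary~\ref{cor:gamma-positive-finiteQ-wlsc}, and your restriction to finite $\Qcal$ with weak l.s.c.\ is genuinely necessary: the singleton $\Qcal$ of Proposition~\ref{prop:counterexample} satisfies the no-gap hypothesis yet has $\KLinf(Q,\Pcal)>0=\Gamma(\Qcal,\Pcal)$. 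Your separate treatment of $\KLinf(Q,\Pcal)=\infty$ also covers a case that Theorem~\ref{thm:main} as stated excludes.
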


We will now specialize. First, we will give a condition that gives an if and only if statement. It is for finite $\Qcal$.

\begin{corollary}\label{cor:gamma-positive-finiteQ}
Assume that $\Qcal=\{Q_1,Q_2,...,Q_M\}$ is finite. Then it follows that
\[
\Gamma(\Qcal,\Pcal)=\inf_{Q\in\Qcal}d_{\star}(Q,\Pcal)=\inf_{Q\in\Qcal}\lim_{n\to\infty}\frac1n \KLinf(Q^n, (\Pcal^n)^{{\circ}{\circ}}).
\]
Therefore $\Gamma(\Qcal,\Pcal)>0$ implies that
\[
\inf_{Q\in\Qcal}\lim_{n\to\infty}\frac1n \KLinf(Q^n,(\Pcal^n)^{{\circ}{\circ}})>0.
\]
\end{corollary}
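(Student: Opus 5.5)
The proof will combine three earlier results into a chain of inequalities. The upper bound $\Gamma(\Qcal,\Pcal)\le \inf_{Q\in\Qcal}d_\star(Q,\Pcal)$ is exactly the first assertion of Theorem~\ref{thm:gamma-positive-necessary}, which holds with no assumptions. For the lower bound I will use Theorem~\ref{thm:composite-maxrate}, which gives $\Gamma(\Qcal,\Pcal)=\overline d_{\mathrm{rob}}$, together with Proposition~\ref{prop:finite-Q-gap}, which gives $\overline d_{\mathrm{rob}}=\overline d_{\mathrm{wc}}=\limsup_n \frac1n \min_{i\le M} a_n(Q_i)$. It then remains to show
\[
\limsup_{n\to\infty}\frac1n\min_{i\le M}a_n(Q_i)=\min_{i\le M}d_\star(Q_i,\Pcal).
\]

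This exchange of limit and minimum is the only step that needs care, and finiteness of $\Qcal$ is what makes it work. By Lemma~\ref{lem:an-superadditive-sec}, each sequence $a_n(Q_i)/n$ converges in $[0,\infty]$ to $d_\star(Q_i,\Pcal)$. For a finite family of convergent sequences in the extended reals, the minimum converges to the minimum of the limits. To justify this, note that $\min_i$ is continuous on $[0,\infty]^M$ with the order topology, or argue by hand as follows.

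1. Let $d^\ast:=\min_i d_\star(Q_i,\Pcal)$. For any $c<d^\ast$, each $a_n(Q_i)/n$ eventually exceeds $c$. Since there are only $M$ indices, there is a common threshold beyond which all of them exceed $c$, so $\liminf_n \min_i a_n(Q_i)/n\ge c$.
2. Conversely, $\min_i a_n(Q_i)/n\le a_n(Q_{i_0})/n$ for an index $i_0$ attaining $d^\ast$, so $\limsup_n \min_i a_n(Q_i)/n\le d^\ast$.

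Hence the limit exists and equals $d^\ast$, which covers the case $d^\ast=\infty$ as well. This gives $\Gamma(\Qcal,\Pcal)=\overline d_{\mathrm{wc}}=\inf_{Q\in\Qcal}d_\star(Q,\Pcal)$.

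The second equality in the statement is just notation: $d_\star(Q,\Pcal)=\lim_n\frac1n a_n(Q)$ and $a_n(Q)=\KLinf(Q^n,(\Pcal^n)^{\circ\circ})$ by definition. The final implication, that $\Gamma>0$ forces $\inf_Q\lim_n\frac1n\KLinf(Q^n,(\Pcal^n)^{\circ\circ})>0$, is then immediate from the equality. With the equality in hand, the converse implication also holds.

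The main obstacle is handling the value $+\infty$. Both Theorem~\ref{thm:composite-maxrate} and Proposition~\ref{prop:finite-Q-gap} already treat the infinite case: in Proposition~\ref{prop:finite-Q-gap}, $a_n(\Qcal)=\infty$ forces $b_n=\infty$. The $\log M$ penalty disappears after dividing by $n$ even when some $a_n(Q_i)$ are infinite, so no additional argument should be needed there.
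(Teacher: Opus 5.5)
Your proposal is correct and follows the paper's proof. You combine Theorem~\ref{thm:composite-maxrate} with Proposition~\ref{prop:finite-Q-gap} to get $\Gamma(\Qcal,\Pcal)=\limsup_n \frac1n\min_i a_n(Q_i)$, and then exchange the limit with the finite minimum using $a_n(Q_i)/n\to d_\star(Q_i,\Pcal)$ from Lemma~\ref{lem:an-superadditive-sec}. Your extended-real argument for that exchange also covers $d_\star=+\infty$, which the paper's remark about ``convergent real sequences'' glosses over, and the separate appeal to Theorem~\ref{thm:gamma-positive-necessary} for the upper bound is harmless but redundant.
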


Lastly, to get a if and only if condition with respect to the $\KLinf$, we will need both finite $\Qcal$ and pointwise weak lower semicontinuity. We will formalize that as follows.

\begin{corollary}\label{cor:gamma-positive-finiteQ-wlsc}
Assume that $\Qcal=\{Q_1,\dots,Q_M\}$ is finite and that for each $Q\in\Qcal$ the map $\Phi(R):=\inf_{P\in\Pcal}\KL(R\|P)$ is weakly lower semicontinuous at $Q$. Then, for every $Q\in\Qcal$, $d_{\star}(Q,\Pcal)=\KLinf(Q,\Pcal)$. Hence, $\Gamma(\Qcal,\Pcal)=\inf_{Q\in\Qcal}\KLinf(Q,\Pcal)$. Therefore
\[
\boxed{
\Gamma(\Qcal,\Pcal)>0\Longleftrightarrow \inf_{Q\in\Qcal}d^{\star}(Q,\Pcal)>0 \Longleftrightarrow \inf_{Q\in\Qcal}\KLinf(Q,\Pcal)>0.
}
\]
\end{corollary}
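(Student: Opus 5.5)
The plan is to chain together three earlier results: Theorem~\ref{thm:main} (applied separately to each $Q_i$), Corollary~\ref{cor:gamma-positive-finiteQ} (which identifies $\Gamma$ for finite $\Qcal$), and Theorem~\ref{thm:gamma-positive-necessary} (which supplies the general implications).

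\emph{Step 1: $d_\star(Q,\Pcal)=\KLinf(Q,\Pcal)$ for each $Q\in\Qcal$.} I split into two cases.

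If $\KLinf(Q,\Pcal)<\infty$, then by hypothesis $\Phi$ is weakly lower semicontinuous at $Q$. Theorem~\ref{thm:main} then gives
\[
\lim_{n\to\infty}\tfrac1n a_n(Q)=\KLinf(Q,\Pcal),
\]
and the left-hand side is exactly $d_\star(Q,\Pcal)$ by Lemma~\ref{lem:an-superadditive-sec}.

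If $\KLinf(Q,\Pcal)=+\infty$, Theorem~\ref{thm:main} does not apply directly, since it assumes finiteness. This is the step I expect to be the main obstacle. I would rerun the lower-bound half of its proof with an arbitrary threshold $L>0$ in place of $d-\varepsilon$:
\begin{itemize}
\item Lower semicontinuity of $\Phi$ at $Q$ gives a weak neighborhood of $Q$ on which $\Phi\ge L$.
\item Inside it, take a closed convex bounded-Lipschitz ball $C$ around $Q$.
\item Lemma~\ref{lem:convex-empirical} gives $\sup_{P\in\Pcal}P^n(\widehat Q_n\in C)\le e^{-nL}$.
\item Lemma~\ref{lem:event-sup} together with the binary data-processing bound, Lemma~\ref{lem:binary-dpi}, then gives $\liminf_n a_n(Q)/n\ge L$.
\end{itemize}
None of these steps used finiteness of $d$. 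Letting $L\to\infty$ yields $d_\star(Q,\Pcal)=\infty=\KLinf(Q,\Pcal)$.

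\emph{Step 2: identify $\Gamma$.} Since $\Qcal$ is finite, Corollary~\ref{cor:gamma-positive-finiteQ} gives $\Gamma(\Qcal,\Pcal)=\inf_{Q\in\Qcal}d_\star(Q,\Pcal)$. That corollary rests on Proposition~\ref{prop:finite-Q-gap} and Theorem~\ref{thm:composite-maxrate}, together with the fact that for finite $\Qcal$ the limit of $\frac1n\min_i a_n(Q_i)$ equals $\min_i d_\star(Q_i,\Pcal)$; this holds because each $a_n(Q_i)/n$ converges and the minimum is over finitely many terms. Combining with Step~1,
\[
\Gamma(\Qcal,\Pcal)=\min_i \KLinf(Q_i,\Pcal).
\]

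\emph{Step 3: the boxed equivalences.} The forward implications are already Theorem~\ref{thm:gamma-positive-necessary}. For the converse, suppose $\inf_{Q\in\Qcal}\KLinf(Q,\Pcal)>0$. Step~1 shows this infimum equals $\inf_{Q\in\Qcal} d_\star(Q,\Pcal)$, and Step~2 shows that quantity equals $\Gamma$. Hence $\Gamma>0$, and the three quantities are equal, which closes the cycle of equivalences.
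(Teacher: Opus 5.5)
Your proposal is correct and follows the paper's route: apply Theorem~\ref{thm:main} pointwise to get $d_\star(Q,\Pcal)=\KLinf(Q,\Pcal)$ for each $Q\in\Qcal$, then invoke Corollary~\ref{cor:gamma-positive-finiteQ}. Your separate treatment of the case $\KLinf(Q,\Pcal)=+\infty$ fills a small gap that the paper's one-line proof glosses over. There you rerun the Sanov/binary-DPI lower bound with an arbitrary threshold $L$, which is needed because Theorem~\ref{thm:main} as stated assumes finiteness.
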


We would like to make an important note on the testability problem now. Thanks to Theorem~\ref{thm:powerone-characterization}, we always have a necessary and sufficient condition for characterizing the existence of power one sequential tests for arbitrary composite nulls against point alternatives. However, one question remains here: do we have such a statement for arbitrary composite alternatives? Is $\Gamma(\Qcal,\Pcal)>0$ both necessary and sufficient for the existence of a level-$\alpha$ power one sequential test of $\Pcal$ against $\Qcal$? The answer turns out to be no. We will convince ourselves of this with a very simple (countable) counterexample.

\begin{example}\label{ex:baby-breakiffseqbip}
Consider the null $\Pcal=\{P_0\}$, where $P_0\sim\mathrm{Bern}(1/2)$. Now, let $\Qcal:=\{Q_i: i\ge 1\}$, where $Q_i\sim\mathrm{Bern}(1/2 + 2^{-i-2})$. Here, each $Q_i$ will be individually power-one against $\{P_0\}$ thanks to Theorem~\ref{thm:powerone-characterization}, since $\KL(Q_i\|P_0)>0$. Now let us choose level allocations $\alpha_i:=\alpha 2^{-i}$. And let $\tau_i$ be a level-$\alpha_i$ power one test against $Q_i$. Define the stopping rule $\tau:=\inf_{i\ge 1}\tau_i$. It follows then that
\[
P_0^{\infty}(\tau<\infty)\le \sum_{i\ge 1} P_0^{\infty}(\tau_i<\infty)\le \sum_{i\ge1}\alpha_i=\alpha.
\]
However, for each $i$, $Q_i^{\infty}(\tau<\infty)=1$ since $\tau<\tau_i$. Therefore, $\Qcal$ is clearly power-one sequentially testable against $\Pcal$ in this case. But recognize that since $\KL(Q_i\|P_0)\downarrow 0$ we have that, thanks to Theorem~\ref{thm:gamma-positive-necessary},
\[
\Gamma(\Qcal,\Pcal)\le \inf_{Q\in\Qcal} d_{\star} (Q,\Pcal)\le \inf_{i\ge 1}\KL(Q_i\|P_0)=0.
\]
Thus, we have a power one test for arbitrary $\Qcal,\Pcal$ albeit $\Gamma=0$, proving that $\Gamma(\Qcal,\Pcal)$ being strictly positive is not necessary to have a power one test exist. 
\end{example}

\subsection{Pointwise Power-One Test Characterization for composite $\Pcal$ against composite $\Qcal$}

The most fully general characterization of the optimal uniform growth rate is already presented in Theorem~\ref{thm:composite-maxrate}. However, the case that is most easy to see is the weakly compact one. In this regime, we already have a uniform positive lower bound on the $\KLinf(Q,\Pcal)$ over all $Q\in\Qcal$. From it, we have a single test supermartingale that works simultaneously for all alternatives. To be clear, we say that a level-$\alpha$ sequential test is pointwise power one against $\Qcal$ if $\sup_{P\in\Pcal}P^{\infty}(\tau<\infty)\le \alpha$ and for every $Q\in\Qcal$, $Q^{\infty}(\tau<\infty)=1$.

\begin{theorem}\label{thm:uniform-power-one-compact}
Suppose that $\X$ is Polish. Suppose that $\Pcal,\Qcal\subseteq \mathcal M_1(\X)$ are weakly compact. And suppose that $\Qcal\subseteq\Pcal^c$. Recall that $d_{\min}:=\inf_{Q\in\Qcal}\KLinf(Q,\Pcal)$. Suppose that $d_{\mathrm{min}}>0$. Then it follows that for every $\varepsilon\in (0,d_{\mathrm{min}})$, there exist deterministic times $t_k\uparrow \infty$ and a single test supermartingale $(E_{t_k})_{k\ge 0}$ for $\Pcal$ such that for every $Q\in\Qcal$ wp 1 under $Q^\infty$,
\[
\liminf_{k\to\infty}\frac{1}{t_k}\log E_{t_k} \ge d_{\min}-\varepsilon.
\]
And,
\[
\liminf_{k\to\infty}\frac{1}{t_k}\inf_{Q\in\Qcal}\E_{Q^\infty}[\log E_{t_k}] \ge d_{\min}-\varepsilon.
\]
As a consequence, for every $\alpha\in(0,1)$, the stopping time $\tau_{\alpha}:=\inf\{{t_k}\ge 1: E_{t_k}\ge 1/\alpha\}$ is a power-one level-$\alpha$ test of $\Pcal$ against $\Qcal$.
\end{theorem}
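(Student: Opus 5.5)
Our plan is to build a single block $e$-variable from an empirical-measure event, exactly as in the lower-bound half of the proof of Theorem~\ref{thm:main}, but made uniform over $\Qcal$ by a compactness covering. Fix $\varepsilon\in(0,d_{\min})$. Since $\Pcal$ is weakly compact, $\Phi(R)=\inf_{P\in\Pcal}\KL(R\|P)$ is weakly l.s.c.\ everywhere, so for each $Q\in\Qcal$ there is a $d_{\mathrm{BL}}$-ball $B_{r_Q}(Q)$ on which $\Phi\ge \Phi(Q)-\varepsilon/4\ge d_{\min}-\varepsilon/4$. The balls of radius $r_Q/2$ cover the weakly compact $\Qcal$, so we extract a finite subcover with centres $Q_1,\dots,Q_M$. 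We let $C_i$ be the closed convex $d_{\mathrm{BL}}$-ball of radius $r_{Q_i}$ around $Q_i$. Then $\inf_{R\in C_i}\Phi(R)\ge d_{\min}-\varepsilon/4$, and every $Q\in\Qcal$ lies in the interior of some $C_i$, at distance at least $r_{Q_i}/2$ from its boundary.

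For a block length $m$ we define the event $A_m^i:=\{\widehat Q_m\in C_i\}$. Lemma~\ref{lem:convex-empirical} gives $\sup_{P\in\Pcal}P^m(A_m^i)\le e^{-m(d_{\min}-\varepsilon/4)}$. We then take the block $e$-variable
\[
E^\star:=\frac1{2M}\sum_{i=1}^M e^{m(d_{\min}-\varepsilon/4)}\one_{A_m^i}+\frac12 .
\]
It lies in $(\Pcal^m)^\circ$, and adding the constant $1/2$ keeps $\log E^\star\ge -\log 2$ bounded below, which gives integrability. On $\bigcup_i A_m^i$ we have $\log E^\star\ge m(d_{\min}-\varepsilon/4)-\log(2M)$. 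Hence for every $Q$,
\[
\E_{Q^m}[\log E^\star]\ge Q^m\Bigl(\bigcup_i A_m^i\Bigr)\bigl(m(d_{\min}-\varepsilon/4)-\log 2M\bigr)-\log 2 .
\]
We build the product supermartingale $E_{t_k}$ with $t_k=km$, exactly as in Proposition~\ref{prop:achieve-infkl-rate}; it is a test supermartingale for $\Pcal$. Each $\log E^\star$ block is bounded, so the SLLN gives $t_k^{-1}\log E_{t_k}\to m^{-1}\E_{Q^m}[\log E^\star]$ $Q^\infty$-a.s.\ for every $Q$, and the expected-log statement is exact at every $k$.

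The main obstacle is making $Q^m(\bigcup_iA_m^i)\to1$ \emph{uniformly} over $Q\in\Qcal$, since the SLLN for $d_{\mathrm{BL}}(\widehat Q_m,Q)$ is only pointwise. It suffices to have $\sup_{Q\in\Qcal}Q^m(d_{\mathrm{BL}}(\widehat Q_m,Q)>\rho)\to0$ with $\rho:=\min_i r_{Q_i}/2$. We would first try tightness of $\Qcal$ (Prokhorov). Fix a compact $K$ with $\sup_{Q\in\Qcal} Q(K^c)\le\rho/8$. On $K$, the bounded-Lipschitz unit ball restricted to $K$ is totally bounded in sup norm by Arzel\`a--Ascoli, so it has a finite $\rho/8$-net $f_1,\dots,f_L$. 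Hoeffding's inequality applied to each $f_l$ and to $\one_{K^c}$, together with a union bound, then gives a bound uniform in $Q$. Once this is in place, choosing $m$ large enough yields $\inf_{Q}m^{-1}\E_{Q^m}[\log E^\star]\ge d_{\min}-\varepsilon$, because $(\log 2M+\log2)/m\to0$ and the probability factor tends to $1$ uniformly.

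Finally, both the almost-sure and the expected growth rates are at least $d_{\min}-\varepsilon>0$ under every $Q\in\Qcal$. Therefore $E_{t_k}\to\infty$ $Q^\infty$-a.s., so $\tau_\alpha<\infty$ $Q^\infty$-a.s. Level $\alpha$ under each $P\in\Pcal$ follows from Ville's inequality applied to the nonnegative supermartingale $E_{t_k}$ with $E_{t_0}=1$.
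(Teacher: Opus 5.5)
Your proof is correct, and it takes a genuinely different route from the paper's.

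\textbf{The paper's route.} The paper obtains this theorem from Theorem~\ref{thm:cover-and-mix-eprocess} via Corollary~\ref{cor:uniform-cover-and-mix}. It slices $\Qcal$ into superlevel sets $\{d(Q)\ge m\varepsilon/2\}$ and covers each by finitely many BL balls. To each ball it attaches the typical-set supermartingale of Lemma~\ref{lem:typicalset-eprocess-fixedU}, which uses growing block lengths $m_k=k^2$; the success of the $k$th block is controlled, for each fixed $Q$, by Sanov's theorem and Borel--Cantelli. It then takes a countable mixture of these supermartingales. This gives more than the statement asks for, namely the adaptive rate $d(Q)-\varepsilon$ under every $Q$. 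However, all its estimates are pointwise in $Q$. The second display is therefore obtained by taking $\inf_Q$ of a $\liminf_k$ bound, which on its face yields $\inf_Q\liminf_k$ rather than the stronger $\liminf_k\inf_Q$.

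\textbf{Your route.} You target only the common level $d_{\min}$ and fold a finite mixture into a single block $e$-variable of fixed length $m$. The price is a uniform-in-$Q$ concentration bound for $d_{\mathrm{BL}}(\widehat Q_m,Q)$ over the tight class $\Qcal$ (Prokhorov, Arzel\`a--Ascoli, Hoeffding, union bound), which replaces Sanov's theorem under $Q$. In exchange, the block increments are i.i.d.\ and bounded, so the almost-sure claim is a plain SLLN. Moreover, $t_k^{-1}\inf_{Q\in\Qcal}\E_{Q^\infty}[\log E_{t_k}]=m^{-1}\inf_{Q\in\Qcal}\E_{Q^m}[\log E^\star]\ge d_{\min}-\varepsilon$ holds exactly at every $k$. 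So your uniformity step is precisely what justifies placing the infimum inside the $\liminf$.

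\textbf{Minor bookkeeping.}
\begin{itemize}
\item Choose $r_Q$ so that the closed ball, not just the open one, lies in the open set $\{\Phi>d_{\min}-\varepsilon/4\}$. The paper does this with $\delta=r/2$.
\item Take the finite sup-norm net from elements of the BL unit ball itself, so the $f_l$ are defined and bounded on all of $\X$. Then tune the Hoeffding deviation, e.g.\ to $\rho/12$ with your $\rho/8$ choices, so the total error is at most $\rho$.
\item Your $E^\star$ presupposes $d_{\min}<\infty$. The paper does not treat the case $d_{\min}=\infty$ either.
\end{itemize}
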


The takeaway here is that weak compactness allows us the lower semicontinuity, which in turn will control the $\KL$ uniformly over $\Qcal$. Our assumption that $d_{\mathrm{min}}>0$ entails that the entire alternative class is uniformly separated from the null. Hence we get a single test supermartingale that grows at a strictly positive asymptotic rate no matter which $Q\in\Qcal$ is true. And if we threshold that process, we will get a sequential test that is pointwise power-one.

Now, we will upgrade our earlier Corollary~\ref{cor:gamma-positive-finiteQ-wlsc} to a more interpretable one that also gives us a uniform power one test.

\begin{corollary}\label{cor:uniform-power-one-finite}
Suppose that $\X$ is Polish. And suppose that $\Pcal\subseteq \mathcal M_1(\X)$ is weakly compact. Assume that $\Qcal=\{Q_1,\dots,Q_M\}$ is finite. Then, the following conditions are equivalent:
\begin{enumerate}
    \item $\Gamma(\Qcal,\Pcal)>0$.
    \item $\inf_{Q\in\Qcal}\KLinf(Q,\Pcal)>0$.
\end{enumerate}
And if these equivalent conditions hold, for every $\alpha\in(0,1)$, there exists a power-one level-$\alpha$ test of $\Pcal$ against $\Qcal$.
\end{corollary}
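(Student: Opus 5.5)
The plan is to obtain the equivalence by combining Corollary~\ref{cor:gamma-positive-finiteQ-wlsc} with the fact that weak compactness of $\Pcal$ makes $\Phi(R):=\inf_{P\in\Pcal}\KL(R\|P)$ weakly lower semicontinuous everywhere (the result from \cite{ram2026powersequentialtestsexist} already used in Proposition~\ref{prop:compact-implies-out-eventual}). The test is then built from Theorem~\ref{thm:uniform-power-one-compact}, applied to the finite class $\Qcal$.

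\emph{Equivalence.} Since $\Pcal$ is weakly compact, $\Phi$ is weakly l.s.c.\ at each $Q_i$. Corollary~\ref{cor:gamma-positive-finiteQ-wlsc} then gives $\Gamma(\Qcal,\Pcal)=\min_{i\le M}\KLinf(Q_i,\Pcal)$, so (1) and (2) are equivalent.

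One subtlety is that Theorem~\ref{thm:main} assumes $\KLinf(Q,\Pcal)<\infty$. If some $\KLinf(Q_i,\Pcal)=\infty$, I would check that Corollary~\ref{cor:gamma-positive-finiteQ-wlsc} still applies. Failing that, I would argue directly: $\Gamma\ge$ the positive quantity only matters for the direction (2)$\Rightarrow$(1). For that direction, run the lower-bound half of the proof of Theorem~\ref{thm:main} with $\Phi\ge L$ on a neighborhood of $Q_i$, for arbitrary $L$. This gives $d_\star(Q_i,\Pcal)=\infty$. Combined with Corollary~\ref{cor:gamma-positive-finiteQ}, this yields $\Gamma>0$. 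The direction (1)$\Rightarrow$(2) is unconditional by Theorem~\ref{thm:gamma-positive-necessary}.

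\emph{Power-one test.} Under (2), a finite set is weakly compact, so Theorem~\ref{thm:uniform-power-one-compact} applies. Its hypothesis $\Qcal\subseteq\Pcal^c$ holds because $\KLinf(Q_i,\Pcal)>0$ forces $Q_i\notin\Pcal$. Taking $\varepsilon=d_{\min}/2$ gives a test supermartingale $(E_{t_k})$ with $\liminf_k t_k^{-1}\log E_{t_k}\ge d_{\min}/2>0$ almost surely under each $Q_i^\infty$. Hence $E_{t_k}\to\infty$ and $\tau_\alpha<\infty$ almost surely. Level $\alpha$ follows from Ville's inequality for the nonnegative supermartingale under each $P^\infty$.

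As a fallback that avoids Theorem~\ref{thm:uniform-power-one-compact}, I would use Theorem~\ref{thm:powerone-characterization}, since $d_\star(Q_i,\Pcal)=\KLinf(Q_i,\Pcal)>0$ by Theorem~\ref{thm:main}. This gives level-$\alpha/M$ stopping times $\tau_i$ with power one against $Q_i$. Then $\tau=\min_i\tau_i$ is a level-$\alpha$ test by a union bound, and it has power one against every $Q_i$.

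The main obstacle is the bookkeeping in the case where some $\KLinf$ is infinite, together with confirming that the cited l.s.c.\ result needs only weak compactness on Polish $\X$. The rest is direct assembly of earlier results.
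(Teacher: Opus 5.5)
Your proposal is correct and follows essentially the paper's route. The paper combines Proposition~\ref{prop:finite-Q-gap}, Theorem~\ref{thm:composite-maxrate} and Theorem~\ref{thm:main} for each $Q_i$ (re-deriving Corollary~\ref{cor:gamma-positive-finiteQ-wlsc} inline) to get $\Gamma(\Qcal,\Pcal)=\min_i\KLinf(Q_i,\Pcal)$, then cites Theorem~\ref{thm:uniform-power-one-compact} for the test; your treatment of $\KLinf(Q_i,\Pcal)=\infty$ (rerunning the lower-bound half of Theorem~\ref{thm:main} at an arbitrary level $L$) and your union-bound fallback are sound and fill a case the paper's proof skips, since Theorem~\ref{thm:main} assumes finite $\KLinf$.
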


Now, we will make this more concrete by upgrading this corollary. We will present two general sufficient conditions for power-one sequential tests against $\Pcal$ and $\Qcal$. 

\begin{theorem}\label{thm:two-sufficient-powerone}
Let $\Pcal,\Qcal\subseteq\mathcal M_1(\X)$ be nonempty and nonintersecting. Suppose that either of the following two conditions hold:
\begin{enumerate}
    \item $\Gamma(\Qcal,\Pcal)>0$.
    \item $\Pcal$ is weakly compact.
\end{enumerate}
Then, for every $\alpha\in(0,1)$, there exists a level $\alpha$ pointwise power-one sequential test of $\Pcal$ against $\Qcal$.
\end{theorem}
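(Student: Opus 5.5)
\textbf{Condition 1 ($\Gamma(\Qcal,\Pcal)>0$).} I would reuse the block construction from the proof of Theorem~\ref{thm:composite-maxrate}. Since $\Gamma(\Qcal,\Pcal)=\overline d_{\mathrm{rob}}>0$, there is a block length $m$ and some $E^\star\in(\Pcal^m)^\circ$ with
\[
\inf_{Q\in\Qcal}\E_{Q^m}[\log E^\star]\ge c>0.
\]
Set $t_k=km$ and let $E_{t_k}$ be the product of $E^\star$ over the first $k$ blocks. As shown there, this is a blockwise test supermartingale for $\Pcal$.

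Fix $Q\in\Qcal$. The block increments $Y_j=\log E^\star(\text{block } j)$ are i.i.d.\ under $Q^\infty$ with mean in $[c,\infty]$. Their negative part is integrable, since otherwise the mean would be $-\infty$ under the convention of Definition~\ref{def:kl}. The strong law for i.i.d.\ variables with mean in $(0,\infty]$ (truncate the positive part if the mean is infinite) then gives $\sum_{j\le k}Y_j\to\infty$, so $E_{t_k}\to\infty$ $Q^\infty$-a.s.

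Define $\tau_\alpha:=\inf\{t_k: E_{t_k}\ge 1/\alpha\}$. This is an $\bbF$-stopping time, because $\{\tau_\alpha\le n\}$ only involves the $E_{t_k}$ with $t_k\le n$. Ville's inequality for the nonnegative supermartingale $(E_{t_k})$ under each $P^\infty$, $P\in\Pcal$, gives $P^\infty(\tau_\alpha<\infty)\le\alpha$. Meanwhile $Q^\infty(\tau_\alpha<\infty)=1$ for every $Q\in\Qcal$.

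\textbf{Condition 2 ($\Pcal$ weakly compact).} Here $\Qcal$ is arbitrary and possibly uncountable, so Theorem~\ref{thm:uniform-power-one-compact} does not apply. I would instead build a countable union of tests, using that the weak topology on $\mathcal M_1(\X)$ is separable metrizable by $d_{\mathrm{BL}}$.

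First I show that $\Phi(Q)>0$ for every $Q\notin\Pcal$, where $\Phi(R)=\inf_{P\in\Pcal}\KL(R\|P)$. The map $P\mapsto\KL(Q\|P)$ is weakly l.s.c., so it attains its infimum on the compact set $\Pcal$ at some $P_0$. Since $Q\neq P_0$, we get $\Phi(Q)=\KL(Q\|P_0)>0$. By \cite[Lemma~3]{ram2026powersequentialtestsexist}, $\Phi$ is also weakly l.s.c.\ everywhere.

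Next I cover the complement of $\Pcal$ by countably many balls. Fix a countable $d_{\mathrm{BL}}$-dense set $\{R_i\}$ and enumerate as $(C_j)_{j\ge1}$ all closed balls $\overline B^{\mathrm{BL}}_{r}(R_i)$ with rational $r$ such that $c_j:=\inf_{R\in C_j}\Phi(R)>0$. For $Q\notin\Pcal$, lower semicontinuity gives a radius $\rho$ with $\Phi>\Phi(Q)/2$ on $B_{\rho}(Q)$. Choosing $R_i$ and a rational $r$ with $d_{\mathrm{BL}}(R_i,Q)<r/2$ and $r<\rho/2$ places $Q$ in the \emph{interior} of some $C_j\subseteq B_\rho(Q)$.

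Each $C_j$ is convex and weakly closed, so Lemma~\ref{lem:convex-empirical} gives $\sup_{P\in\Pcal}P^n(\widehat Q_n\in C_j)\le e^{-nc_j}$. Choose $N_j$ with $\sum_{n\ge N_j}e^{-nc_j}\le \alpha 2^{-j}$, and define
\[
\tau:=\inf\{n:\ \exists j \text{ with } n\ge N_j,\ \widehat Q_n\in C_j\}.
\]
This is a stopping time. A union bound over $j$ and $n$ gives $\sup_{P\in\Pcal}P^\infty(\tau<\infty)\le\alpha$. For $Q\in\Qcal$, Varadarajan's theorem gives $\widehat Q_n\Rightarrow Q$ $Q^\infty$-a.s., so $\widehat Q_n$ eventually lies in the interior of the relevant $C_j$, and hence $Q^\infty(\tau<\infty)=1$.

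\textbf{Main obstacle.} The step needing most care is the covering in condition 2: each $Q$ must lie in the interior of a ball on which $\Phi$ is bounded away from zero. This is exactly where lower semicontinuity of $\Phi$, coming from compactness of $\Pcal$, and attainment of $\KLinf$ at a point $P_0\neq Q$ are used. Alternatively, one could cite the complement-testing result of \cite{ram2026powersequentialtestsexist} directly.
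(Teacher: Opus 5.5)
Your proof is correct. Condition 1 follows the paper's argument, while condition 2 takes a genuinely different route.

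\textbf{Condition 1.} This is essentially the paper's proof. You use Theorem~\ref{thm:composite-maxrate} to get $m$ and $E^\star\in(\Pcal^m)^\circ$ with $\inf_{Q\in\Qcal}\E_{Q^m}[\log E^\star]>0$. You then take the blockwise product, apply the (extended) strong law under each $Q$, and apply Ville's inequality under each $P$. Your observation that the negative part of $\log E^\star$ is $Q^m$-integrable, by the truncation convention, is exactly what the paper's ``extended SLLN'' step needs.

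\textbf{Condition 2.} Here the paper simply invokes \cite[Theorem~1]{ram2026powersequentialtestsexist}. You instead reprove that result from ingredients already in the paper:
\begin{enumerate}
\item attainment of $\KLinf$ on the compact $\Pcal$, which gives $\Phi>0$ off $\Pcal$;
\item weak lower semicontinuity of $\Phi$;
\item a countable family of closed $d_{\mathrm{BL}}$-balls, which are convex and weakly closed, with $\inf_{C_j}\Phi>0$ and interiors covering $\Pcal^c$;
\item Lemma~\ref{lem:convex-empirical}, for exponential bounds under the null;
\item a union bound with level budget $\alpha 2^{-j}$.
\end{enumerate}
The covering argument checks out. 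Your choice $d_{\mathrm{BL}}(R_i,Q)<r/2$, $r<\rho/2$ puts $B_{r/2}(Q)$ inside $C_j\subseteq B_\rho(Q)$. The only cosmetic fix is to use a finite level such as $\Phi>1$ instead of $\Phi(Q)/2$ when $\Phi(Q)=\infty$.

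\textbf{Comparison.} Your route makes the theorem self-contained. It also yields a single stopping time with power one against every $Q\notin\Pcal$, with no structure needed on $\Qcal$. What it does not give is an underlying wealth process, unlike the thresholded test supermartingale in condition~1 or in Theorem~\ref{thm:uniform-power-one-compact}. Validity comes from a union bound over events rather than from Ville's inequality, so no growth-rate statement is attached. The paper's citation is shorter but leaves this case dependent on an external result.
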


Notice that throughout for this composite alternative case, we have not presented any condition that is both necessary and sufficient. Now, if a level $\alpha$ pointwise power one test of $\Pcal$ against $\Qcal$ were to exist, thanks to Theorem~\ref{thm:powerone-characterization} applied pointwise to $Q\in\Qcal$,
\[
d_\star(Q,\Pcal)=\lim_{n\to\infty}\frac1n\inf_{R\in(\Pcal)^{\circ\circ}}\KL(Q^n\|R)>0.
\]
In other words, this condition is necessary (pointwise). However, it is not sufficient in general for a single test to work against all $Q\in\Qcal$ without some additional structure.

\section{Why (Blockwise) Test Supermartingales Suffice}\label{sec:super}

We will now present several structural theorems for rate optimality. The first will be existence of test supermartingale achieving the $\KLinf$ growth rate under weak lower semicontinuity. Let us now build a test-supermartingale as in Definition~\ref{def:test-supermartingale} for the composite null $\Pcal$ whose asymptotic logarithmic growth rate under $Q$ is in fact $\KLinf(Q,\Pcal)$. In this construction specifically we are going to use geometrically increasing blocks and blockwise typical set e variables. In building this, importantly, note this will be under no assumptions whatsoever on the point alternative $Q$.

\subsection{Test supermartingale that achieves asymptotic growth rate $\KLinf(Q,\Pcal)$}

\begin{proposition}\label{prop:eprocess-rate}
Let $Q\in\mathcal M_1(\X)$ and let $\Pcal\subseteq\mathcal M_1(\X)$ be nonempty. Assume that $\KLinf(Q,\Pcal)<\infty$ and that $\Pcal$ is $\KLinf$ lower semicontinuous at $Q$. Let $d:=\KLinf(Q,\Pcal)=\inf_{P\in\Pcal}\KL(Q\|P)\in[0,\infty)$. Then, it follows that there exists a test supermartingale $(E_{t_k})_{k\ge 0}$ for $\Pcal$ and an associated nondecreasing sequences of sample sizes $(t_k)_{k\ge 0}$ with $t_0=0$ and $t_k\to\infty$ such that under $Q^\infty$,
\[
\lim_{k\to\infty}\frac{1}{t_k}\log E_{t_k} = d\quad{\text{almost surely.}}
\]
And also,
\[
\lim_{k\to\infty}\frac{1}{t_k}\E_{Q^\infty}[\log E_{t_k}]=d.
\]
If $d=0$ then we can take $E_{t_k}\equiv1$ for all $k$.
\end{proposition}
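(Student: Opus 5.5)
Since the paper mentions "geometrically increasing blocks and blockwise typical set e variables", I will take block lengths $m_k$ growing geometrically (say $m_k = 2^k$) and $t_k=\sum_{j\le k} m_j$. On block $k$ I use a typical-set e-variable built from the proof of Theorem~\ref{thm:main}. The case $d=0$ is trivial: set $E\equiv1$. So assume $d\in(0,\infty)$ and fix a sequence $\varepsilon_k\downarrow0$ with $\varepsilon_k<d/4$.

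\textbf{Neighborhoods and bounds on the null.} By $\KLinf$ lower semicontinuity at $Q$ and the bounded Lipschitz argument of Theorem~\ref{thm:main}, each $\varepsilon_k$ gives a closed convex $d_{\mathrm{BL}}$-ball $C_k$ around $Q$ with $\inf_{R\in C_k}\Phi(R)\ge d-\varepsilon_k$. Let $A^{(k)}$ be the event that the empirical measure of block $k$ lies in $C_k$. Lemma~\ref{lem:convex-empirical} then gives
\[
\beta^{(k)}:=\sup_{P\in\Pcal}P^{m_k}(A^{(k)})\le e^{-m_k(d-\varepsilon_k)}.
\]

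\textbf{The block e-variable.} Using an all-or-nothing indicator would kill the process with positive probability, so I take a mixture:
\[
e_k:=\tfrac12+\tfrac12\,\one_{A^{(k)}}/\beta^{(k)}.
\]
Since $\E_{P^{m_k}}[e_k]\le1$ for every $P\in\Pcal$ and $e_k\ge 1/2$, the product $E_{t_k}=\prod_{j\le k}e_j$ is a blockwise test supermartingale. This follows from independence of blocks under $P^\infty$, exactly as in Proposition~\ref{prop:achieve-infkl-rate}. (If $\beta^{(k)}=0$, replace it by $e^{-m_k d}$; this only helps.)

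\textbf{Growth under $Q$.} On $A^{(k)}$ we have $\log e_k\ge m_k(d-\varepsilon_k)-\log2$, and off it $\log e_k\ge-\log2$. The upper bound comes from Lemma~\ref{lem:eprocess-upper-by-infkl} together with Theorem~\ref{thm:main}: $\E\log E_{t_k}\le a_{t_k}\le t_kd$.

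For the lower bound in expectation, the block lengths must be chosen so that $Q^{m_k}(A^{(k)})\to1$. This holds because $d_{\mathrm{BL}}(\widehat Q_{m},Q)\to0$ almost surely and $C_k$ is fixed for each $k$. I choose $m_k$ recursively: large enough that $Q^{m_k}((A^{(k)})^c)\le 2^{-k}$, and also $m_k\ge 2^k+t_{k-1}$, so that $t_{k-1}/t_k\to0$ and $k/t_k\to0$. Then
\[
\E\log E_{t_k}\ \ge\ \sum_{j\le k}(1-2^{-j})\,m_j(d-\varepsilon_j)\;-\;k\log2 .
\]
Divided by $t_k$, this tends to $d$ by a Cesàro argument, since $\varepsilon_j\to0$ and the weights $m_j/t_k$ sum to one.

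\textbf{Almost sure growth.} Since $\sum_k Q((A^{(k)})^c)<\infty$, Borel--Cantelli shows that almost surely all but finitely many blocks lie in their typical sets. Hence $\liminf_k t_k^{-1}\log E_{t_k}\ge d$ almost surely by the same Cesàro bound; the finitely many bad blocks cost only $O(1)$ each, as $\log e_j\ge-\log2$.

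For the matching almost sure upper bound $\limsup_k t_k^{-1}\log E_{t_k}\le d$, the constructed process is explicit: $\log e_j\le \log(1/\beta^{(j)})\le$ roughly $m_j d$ plus lower order. I expect this to be the main obstacle. The difficulty is that $\beta^{(j)}$ could be much smaller than $e^{-m_jd}$, whereas the supermartingale upper bound from Lemma~\ref{lem:eprocess-upper-by-infkl} only controls expectations. I would fix this by replacing $1/\beta^{(j)}$ with the capped multiplier $e^{m_j d}$. This is still valid, because $\beta^{(j)}e^{m_jd}\le e^{m_j\varepsilon_j}$; to absorb that factor, use multiplier $e^{m_j(d-\varepsilon_j)}$ instead. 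The factor $e_j$ is then deterministic on each event, so $t_k^{-1}\log E_{t_k}\le d$ pointwise. Combined with the lower bounds, this yields both limits equal to $d$.
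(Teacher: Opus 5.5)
Your proposal is correct and is essentially the paper's proof. The shared ingredients are typical-set block factors on shrinking closed convex $d_{\mathrm{BL}}$-balls bounded via Lemma~\ref{lem:convex-empirical}, block lengths chosen recursively so the $Q$-failure probability is at most $2^{-k}$, Borel--Cantelli, and Ces\`aro averaging; your capped factor $\tfrac12+\tfrac12 e^{m_k(d-\varepsilon_k)}\one_{A^{(k)}}$ is a harmless variant of the paper's $(1-\gamma_k)e^{m_k(d-2\varepsilon_k)}\one_{A_k}+\gamma_k$ (it loses $k\log 2=o(t_k)$ rather than a summable amount, but bad blocks then cost only $\log 2$, so the expectation bound needs no condition on $\gamma_k$). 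One small slip: $m_k\ge 2^k+t_{k-1}$ only gives $t_{k-1}/t_k\le\tfrac12$, not $t_{k-1}/t_k\to0$, but your argument never uses that, since $t_k\to\infty$ and $k/t_k\to0$ suffice.
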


We will use the following entropy inequality in the proof.

\begin{lemma}\label{lem:entropy-ineq}
Let $(\Omega, \mathcal F)$ be measurable and $\mu, \nu\in\mathcal M_1(\Omega)$. Now, then it follows that for any measurable $f$ with $\int e^{f}d\nu<\infty$ we have that $\int fd\mu \le \KL(\mu\|\nu)+\log\int e^{f}d\nu$.    
\end{lemma}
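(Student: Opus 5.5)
This is the Donsker--Varadhan (Gibbs) variational inequality, and I would prove it by a change of measure. First I dispose of the trivial cases. If $\KL(\mu\|\nu)=+\infty$ the right-hand side is $+\infty$, because $\log\int e^f d\nu<\infty$ by assumption and is $>-\infty$ unless $e^f=0$ $\nu$-a.s. That residual case is degenerate and handled below. So I assume $\mu\ll\nu$ with density $g:=d\mu/d\nu$ and $\KL(\mu\|\nu)=\int g\log g\,d\nu<\infty$.

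Next I need $\int f\,d\mu$ to be meaningful. I interpret it through the truncation convention $\sup_{M}\int (f\wedge M)\,d\mu$, in the spirit of the paper's convention for $\E_\mu[\log Z]$. It therefore suffices to prove the inequality for $f\wedge M$ for each $M$. Indeed, $\int e^{f\wedge M}d\nu\le \int e^f d\nu+\nu(f>M)e^M$, but more simply $\int e^{f\wedge M}d\nu\le \int e^{f}d\nu + e^{M}\nu(f> M)$. A cleaner route is to note that $e^{f\wedge M}\le e^f\vee 1$, but this changes the constant, so instead I take $M\to\infty$ and use monotone convergence together with $\int e^{f\wedge M}\,d\nu\downarrow$ to a value at most $\int e^f d\nu+\lim e^M\nu(f>M)$. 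Because this tail limit is awkward, I will instead prove the inequality directly for $f$ bounded above, and then use $f\wedge M$ with $e^{f\wedge M}\le e^f+ \mathbf 1$-type bounds only if needed. In practice I expect to argue with $f^+$ handled by monotone convergence after establishing the bounded-above case.

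The core step is to define the tilted probability measure $d\nu_f:=e^f\,d\nu/Z$ with $Z:=\int e^f d\nu\in(0,\infty)$. On $\{g>0\}$ we have
\[
\log g - f + \log Z = \log\frac{d\mu}{d\nu_f}
\]
wherever $\mu\ll\nu_f$; this holds $\mu$-a.s. provided $e^f>0$, which is true whenever $f$ is real-valued. Integrating against $\mu$ gives
\[
\KL(\mu\|\nu)-\int f\,d\mu+\log Z=\KL(\mu\|\nu_f)\ge 0,
\]
which is exactly the claim.

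The main obstacle is justifying the splitting of $\int(\log g-f)\,d\mu$ when $\int f\,d\mu$ might be $\pm\infty$. I would handle this with Jensen in the form
\[
\int(f-\log g)\,d\mu\le \log\int_{\{g>0\}} e^{f} d\nu\le\log Z,
\]
valid because $\int e^{f-\log g}d\mu=\int_{\{g>0\}} e^f d\nu$. Since $\int(\log g)^{+}\,d\mu<\infty$ follows from finiteness of $\KL$ (using $\int (\log g)^- d\mu\le 1$), the integral splits legitimately, and $\int f\,d\mu<\infty$ follows automatically. This Jensen route avoids $\nu_f$ entirely, and I would present it as the final proof.
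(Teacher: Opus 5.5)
Your final argument, the Jensen route in your last paragraph, is correct and complete. Suppose $\KL(\mu\|\nu)<\infty$. Then $\mu(g>0)=1$ and $\int e^{f-\log g}\,d\mu=\int_{\{g>0\}}e^{f}\,d\nu\le Z$. Since $x^+\le e^x$, this gives $(f-\log g)^+\in L^1(\mu)$, and Jensen gives $\int(f-\log g)\,d\mu\le\log Z$. Moreover $\log g\in L^1(\mu)$, because $\int(\log g)^-\,d\mu=\int_{\{g<1\}}g\log(1/g)\,d\nu\le 1/e$. This justifies the split and shows $f^+\in L^1(\mu)$, so $\int f\,d\mu$ is a genuine integral in $[-\infty,\infty)$ that agrees with the paper's truncation convention.

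The paper gives no proof of this lemma; it is the standard Gibbs/Donsker--Varadhan inequality, invoked in the proof of Proposition~\ref{prop:eprocess-rate}. The closest argument in the paper is the proof of Lemma~\ref{lem:eprocess-upper-by-infkl}. There $\log w_k$ is split as $\log(w_k/g)+\log g$ with $g=dQ^n/dR$ (called $f$ there), and $\log x\le x-1$ is applied. That is the pointwise form of your Jensen step in the normalised case $\int e^f\,d\nu\le 1$. For general $Z$ the corresponding bound is the tangent line $\log x\le\log Z+x/Z-1$, which your Jensen inequality encodes. So you are using the paper's own technique, and you are more explicit than the paper about why the split is integrable.

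Two cleanups are needed. First, delete your second paragraph, which is confused. Since $f\wedge M\le f$, you get $\int e^{f\wedge M}\,d\nu\le\int e^f\,d\nu$ directly, and $\int e^{f\wedge M}\,d\nu$ increases (it does not decrease) to $\int e^f\,d\nu$. So no tail term $e^M\nu(f>M)$ ever arises, and the Jensen argument needs no truncation anyway.

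Second, the residual case you promise to handle ``below'' is only partly handled.
\begin{itemize}
\item If $\int e^f\,d\nu=0$ and $\KL(\mu\|\nu)<\infty$, your Jensen step gives $\int f\,d\mu=-\infty$, and the claim holds.
\item If $\int e^f\,d\nu=0$ and $\KL(\mu\|\nu)=\infty$, the right-hand side is $\infty-\infty$. The statement then needs a convention or the extra hypothesis $\int e^f\,d\nu>0$.
\end{itemize}
That hypothesis does hold in the paper's application, where $E_{t_k}\ge\prod_{j\le k}\gamma_j>0$.
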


In addition, we will also need a lemma for illustrating that the empirical measures converge under $Q$. 

\begin{lemma}\label{lem:empirical-conv}
Suppose that $Q\in\mathcal{M}_1(\X)$ and that $(X_i)_{i\ge 1}$ are i.i.d.\ with law $Q$. Now let $\widehat{Q}_n:=\frac{1}{n}\sum_{i=1}^n\delta_{X_i}$ be the empirical measure. Then it follows that $\widehat{Q}_n\Rightarrow Q$ almost surely. And thus consequently for every weakly open neighborhood $U$ of $Q$ we have that
\[
Q^n\bigl(\widehat{Q}_n\in U\bigr)\longrightarrow1.
\]
\end{lemma}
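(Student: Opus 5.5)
The statement is Lemma~\ref{lem:empirical-conv}. I would prove it via Varadarajan's theorem, as follows. Since $\X$ is Polish, it is separable, so we may fix a countable family $\{f_j\}_{j\ge1}\subseteq C_b(\X)$ that determines weak convergence. A convenient choice is a countable set of bounded Lipschitz functions that is dense, in the sense needed, in the unit ball for $d_{\mathrm{BL}}$.

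One way to build this family:
\begin{itemize}
\item Take a totally bounded metric $\rho$ compatible with the topology of $\X$.
\item Pass to the completion $\widehat\X$ of $(\X,\rho)$, which is compact.
\item Use that $C(\widehat\X)$ is separable, and restrict a countable dense subset of $C(\widehat\X)$ to $\X$. These restrictions are bounded and uniformly continuous.
\end{itemize}
The key fact is that $P_n\Rightarrow P$ if and only if $\int f\,dP_n\to\int f\,dP$ for every bounded $\rho$-uniformly continuous $f$ (Portmanteau). By density, it then suffices to check this on the countable family $\{f_j\}$.

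Next, for each $j$ apply the strong law of large numbers to the i.i.d.\ bounded variables $f_j(X_i)$. This gives a $Q^\infty$-null set $N_j$ off which
\[
\int f_j\,d\widehat Q_n=\frac1n\sum_{i=1}^n f_j(X_i)\to \int f_j\,dQ .
\]
Let $N:=\bigcup_j N_j$, which is still null. Off $N$, convergence holds simultaneously for all $f_j$. A $3\varepsilon$-approximation argument then extends this to every bounded uniformly continuous $f$: choose $f_j$ with $\sup|f-f_j|<\varepsilon$. Hence $\widehat Q_n\Rightarrow Q$ almost surely. Measurability of $\widehat Q_n$ as a random element of $\mathcal M_1(\X)$ is not an issue, since the event $\{\widehat Q_n\Rightarrow Q\}$ is expressed through the countably many functions $f_j$.

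For the second claim, let $U$ be weakly open with $Q\in U$. On the almost sure event, $\widehat Q_n\in U$ eventually, so $\mathbf 1\{\widehat Q_n\in U\}\to1$ almost surely. Dominated convergence then gives $Q^\infty(\widehat Q_n\in U)\to1$. Since $\{\widehat Q_n\in U\}$ depends only on $X_{1:n}$, we have $Q^\infty(\widehat Q_n\in U)=Q^n(\widehat Q_n\in U)$. To ensure the event is measurable, I would shrink $U$ to a $d_{\mathrm{BL}}$-ball around $Q$. The map $x^n\mapsto d_{\mathrm{BL}}(\widehat Q_n,Q)$ is measurable, being a supremum over a countable dense set of test functions. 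Alternatively, one can interpret the probability as an inner probability.

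The main obstacle is the separability step: producing a countable convergence-determining class in a general Polish space. I would handle it with the compactification and totally bounded metric argument above, citing Varadarajan's theorem if more detail is not desired.
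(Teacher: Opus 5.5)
The paper gives no proof of this lemma. It is quoted as a standard fact (Varadarajan's theorem), followed only by the remark that $x^n\mapsto\widehat Q_n(x^n)$ is measurable into $\mathcal M_1(\X)$ with its weak Borel $\sigma$-field, so that $\{\widehat Q_n\in U\}\in\B^{\otimes n}$ for every weakly open $U$. Your argument is the standard proof of Varadarajan's theorem and is correct. The density step silently uses one fact: a bounded $\rho$-uniformly continuous $f$ on $\X$ extends continuously to the compact completion $\widehat\X$. That extension is what lets you choose $f_j$ with $\sup_{\X}|f-f_j|<\varepsilon$, so it is worth stating.

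Two small points about measurability.

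First, shrinking $U$ to a ball, or passing to inner probability, is unnecessary given the measurability remark just quoted.

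Second, if you do shrink, you justify measurability of $x^n\mapsto d_{\mathrm{BL}}(\widehat Q_n,Q)$ as a supremum over a countable sup-norm-dense family of test functions. That needs more care: the bounded-Lipschitz unit ball is in general not separable in sup norm (already on $\mathbb R$ with its usual metric). There are two simpler routes:
\begin{enumerate}
\item The map is a supremum of the functions $x^n\mapsto\bigl|\frac1n\sum_{i=1}^n f(x_i)-\int f\,dQ\bigr|$, each continuous on $\X^n$. Hence it is lower semicontinuous and Borel, and the Borel $\sigma$-field of $\X^n$ is $\B^{\otimes n}$ by separability.
\item Alternatively, compose the measurable map $\widehat Q_n$ with the continuous map $R\mapsto d_{\mathrm{BL}}(R,Q)$.
\end{enumerate}
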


Note that it's the case that the empirical measure map $x^n\mapsto \widehat Q_n(x^n)$ from $(\X^n,\B^{\otimes n})$ into $\mathcal{M}_1(\X)$ (endowed with the Borel $\sigma$-field of the weak topology) is measurable. As a consequence, sets of the form $\{\widehat Q_n\in U\}$ are $B^{\otimes n}$-measurable whenever $U$ is weakly Borel, in particular whenever $U$ is weakly open.

\begin{proof}[Proof of Proposition~\ref{prop:eprocess-rate}]
If $d=0$, then it follows that we can take $E_{t_k}\equiv 1$ and any $t_k\to\infty$, hence the conclusion of our proposition is immediate. Hence going forward, wlog we can assume that $d>0$. To this end, let us first begin by letting $(\varepsilon_k)_{k\ge 1}$ be a positive sequence such that $\varepsilon_k\downarrow 0$ and
\[
\frac{\sum_{j=1}^k\varepsilon_j m_j}{\sum_{j=1}^k m_j}\longrightarrow 0,
\]
for any sequence $(m_j)$ with $m_{j+1}\ge 2m_j$. To be concrete, one can take $\varepsilon_k:=\min\{2^{-k}, d/4\}$ so that $\varepsilon_k\downarrow 0$ and $d-2\varepsilon_k\ge d/2>0$ for all $k$. Then, let us define $\Phi(R):=\inf_{P\in\Pcal}\KL(R\|P)$. By $\KLinf$ lower semicontinuity of $\Phi$ at $Q$ that for each $k\ge 1$, the set $O_k:=\{R\in\mathcal M_1(\X):\Phi(R)>d-\varepsilon_k\}$ is a weakly open neighborhood of $Q$. Let us choose $\delta_k>0$ such that the closed and bounded Lipschitz ball
\[
C_k:=\overline B^{\mathrm{BL}}_{\delta_k}(Q)=\{R\in\mathcal M_1(\X):d_{\mathrm{BL}}(R,Q)\le\delta_k\}
\]
satisfies $C_k\subseteq O_k$. It follows then that $C_k$ is weakly closed and convex, and we have 
\[
\inf_{R\in C_k}\Phi(R)\ge d-\varepsilon_k.
\]
Since, $d_{\mathrm{BL}}(\widehat Q_m, Q)\to 0$ almost surely under $Q^{\infty}$, as $m\to\infty$, we have that $Q^m(\widehat Q_m\in C_k)\to 1$ for each $k$. Thanks to Lemma~\ref{lem:convex-empirical}, then
\[
\sup_{P\in\Pcal} P^m(\widehat Q_m\in C_k) \le \exp\Bigl(-m\inf_{R\in C_k}\Phi(R)\Bigr) \le \exp\bigl(-m(d-\varepsilon_k)\bigr)
\]
Let us now choose the block lengths $m_k$ recursively so that $m_k\ge 2m_{k-1}$ for $k\ge 2$ and
\begin{equation}\label{eq:eproc-Qgood}
Q^{m_k}\bigl(\widehat Q_{m_k}\in C_k\bigr)\ge 1-2^{-k}    
\end{equation}
Now define $A_k:=\{\widehat Q_{m_k}\in C_k\}\subseteq X^{m_k}$. Then
\begin{equation}\label{eq:eproc-betabound}
\sup_{P\in\Pcal}P^{m_k}(A_k) \le \exp\bigl(-m_k(d-\varepsilon_k)\bigr) \le \exp\bigl(-m_k(d-2\varepsilon_k)\bigr)   
\end{equation}
Now, let us define the cumulative sample sizes $t_0:=0$ and $t_k:=\sum_{j=1}^k m_j$. Now let $(\gamma_k)_{k\ge 1}\subset(0,1)$ be summable. Define the $k$th block factor by
\begin{equation}\label{eq:eproc-blockE}
E^{(k)}(x^{m_k}):=(1-\gamma_k)\exp\bigl(m_k(d-2\varepsilon_k)\bigr)\mathbf 1\!\{\widehat Q_{m_k}(x^{m_k})\in C_k\} +\gamma_k    
\end{equation}

Clearly, this is strictly positive and depends only on the data inside the $k$th block. With this now, let us place the observations into successive disjoint blocks. Meaning, block $k$ consists of coordinates $X_{t_{k-1}+1},\dots,X_{t_k}$. Define $E_{t_0}:=1$ and for $k\ge 1$,
\[
E_{t_k}:=\prod_{j=1}^k E^{(j)}\bigl(X_{t_{j-1}+1},\dots,X_{t_j}\bigr).
\]
Then $(E_{t_k})_{k\ge 0}$ is adapted to the $(\mathcal F_{t_k})_{k\ge 0}$. Let us verify that the $e$-process property holds for every $P\in\Pcal$. Meaning, take some $P\in\Pcal$. Because $P^\infty$ is iid, the block $(X_{t_{k-1}+1},\dots, X_{t_k})$ is independent of $\mathcal F_{t_{k-1}}$ and has law $P^{m_k}$. Hence it follows that $\E_{P^\infty}[E_{t_k}\mid \mathcal F_{t_{k-1}}]= E_{t_{k-1}}\E_{P^{m_k}}\left[E^{(k)}\right]$. Therefore we can use the definition \eqref{eq:eproc-blockE} and the bound \eqref{eq:eproc-betabound} to give us that
\begin{align*}
\E_{P^{m_k}}\left[E^{(k)}\right]&=(1-\gamma_k)\exp\bigl(m_k(d-2\varepsilon_k)\bigr)P^{m_k}(A_k)+\gamma_k\\
&\le(1-\gamma_k)\exp\bigl(m_k(d-2\varepsilon_k)\bigr)\sup_{P'\in\Pcal}P'^{m_k}(A_k)+\gamma_k\\
&\le(1-\gamma_k)\cdot 1+\gamma_k=1.
\end{align*}
Then, $\E_{P^\infty}[E_{t_k}\mid\mathcal F_{t_{k-1}}]\le E_{t_{k-1}}$ almost surely. Hence $(E_{t_k})_{k\ge 0}$ is a blockwise test supermartingale as in Definition~\ref{def:test-supermartingale}. We now need to show the almost-sure growth rate under $Q^\infty$. Under $Q^\infty$, the blocks are i.i.d.\ with law $Q^{m_k}$. They are independent across $k$ but not identical since $k$ can absolutely vary. Now let $B_k$ be the event that the $k$th block belongs to $A_k$. That is,
\[
B_k:=\Bigl\{\widehat Q_{m_k}\bigl(X_{t_{k-1}+1},\dots,X_{t_k}\bigr)\in U_k\Bigr\}.
\]
By \eqref{eq:eproc-Qgood} it follows that $Q^\infty(B_k^c)\le 2^{-k}$. Henceby Borel Cantelli Lemma 1, $Q^\infty(B_k\text{ eventually})=1$. On the event that $B_k$ holds for all $k\ge K(\omega)$, for such $k$ we have that $E^{(k)}=(1-\gamma_k)\exp\bigl(m_k(d-2\varepsilon_k)\bigr)+\gamma_k$. Therefore,
\[
\log E^{(k)} \in \Bigl[m_k(d-2\varepsilon_k)+\log(1-\gamma_k),\;m_k(d-2\varepsilon_k)\Bigr],
\]
since $\log\bigl((1-\gamma_k)e^{a}+\gamma_k\bigr)\le \log(e^a)=a$, and $\log\bigl((1-\gamma_k)e^{a}+\gamma_k\bigr)\ge \log\bigl((1-\gamma_k)e^{a}\bigr)=a+\log(1-\gamma_k)$. Let us now sum over all $k\le n$ and then divide by $t_n=\sum_{j=1}^n m_j$. If we do this we get that for all large $n$ on this event that
\[
d-2\frac{\sum_{k=1}^n \varepsilon_k m_k}{t_n} +\frac{\sum_{k=1}^n \log(1-\gamma_k)}{t_n} \le \frac{1}{t_n}\log E_{t_n} \le d-2\frac{\sum_{k=1}^n \varepsilon_k m_k}{t_n} +\frac{C(\omega)}{t_n}.
\]
Note that here $C(\omega)$ is simply some finite constant which accounts for the fact that there are finitely many exceptional blocks $k<K(\omega)$. Since $t_n\to\infty$, we know that $C(\omega)/t_n\to 0$. Further, $\sum_k |\log(1-\gamma_k)|<\infty$ because $\sum_k \gamma_k<\infty$. Hence, $\sum_{k=1}^n \log(1-\gamma_k)=O(1)$ and thus $\frac{1}{t_n}\sum_{k=1}^n \log(1-\gamma_k)\to 0$. Lastly, by our choice of $(\varepsilon_k)$ in conjunction with the geometric growth of $(m_k)$ we have that
\[
\frac{\sum_{k=1}^n \varepsilon_k m_k}{t_n}\longrightarrow 0.
\]
If we take all that we have shown now together we have $\frac{1}{t_n}\log E_{t_n}\to d$ almost surely under $Q^\infty$. It remains to now show the expected log-growth rate under $Q^\infty$. Because we always have $\log E_{t_n}=\sum_{k=1}^n \log E^{(k)}$, it follows that by linearity, $\E_{Q^\infty}[\log E_{t_n}]=\sum_{k=1}^n \E_{Q^{m_k}}[\log E^{(k)}]$. On $B_k$ we have that $\log E^{(k)}\ge m_k(d-2\varepsilon_k)+\log(1-\gamma_k)$. And on $B_k^c$, $\log E^{(k)}=\log\gamma_k$. Therefore we get that
\[
\E_{Q^{m_k}}[\log E^{(k)}] \ge Q^{m_k}(A_k)\Bigl(m_k(d-2\varepsilon_k)+\log(1-\gamma_k)\Bigr) +Q^{m_k}(A_k^c)\log\gamma_k.
\]
Recall that from \eqref{eq:eproc-Qgood}, $Q^{m_k}(A_k)\ge 1-2^{-k}$. Let us also use the two facts that $\log(1-\gamma_k)=O(\gamma_k)$ with $\sum_k\gamma_k<\infty$, and $\sum_k 2^{-k}|\log\gamma_k|<\infty$. Hence,
\[
\frac{1}{t_n}\E_{Q^\infty}[\log E_{t_n}] \ge d-2\frac{\sum_{k=1}^n \varepsilon_k m_k}{t_n} - o(1).
\]
Separately, $(E_{t_k})$ is a blockwise $e$-process for $\Pcal$. Hence it follows that each $E_{t_k}$ is a block $e$-variable for the null $\Pcal$ at block time $k$. Thus for every $P\in\Pcal$ we have that $\E_{P^\infty}[E_{t_k}]\le 1$. Hence by Lemma~\ref{lem:entropy-ineq} applying $\mu=Q^{t_k}$, $\nu=P^{t_k}$ and $f=\log E_{t_k}$ (so $e^f=E_{t_k}$) we get that
\[
\E_{Q^\infty}[\log E_{t_k}]\le \KL(Q^{t_k}\|P^{t_k})+\log \E_{P^\infty}[E_{t_k}].
\]
Necessarily $(E_{t_k})$ is an $e$-process for $\Pcal$, which means that $\E_{P^\infty}[E_{t_k}]\le 1$. Therefore, $\E_{Q^\infty}[\log E_{t_k}]\le \KL(Q^{t_k}\|P^{t_k})$. Now if we take the $\inf_{P\in\Pcal}$ and use tensorization, $\E_{Q^\infty}[\log E_{t_k}]\le t_k d$.
Dividing by $t_k$ and combining with the lower bound gives us equality; that is, $\lim_{k\to\infty}\frac{1}{t_k}\E_{Q^\infty}[\log E_{t_k}]=d$. This completes the proof.
\end{proof}

\subsection{Near log-optimal blockwise test supermartingale for a weakly compact $\Qcal$}
For a single alternative $Q$, Proposition~\ref{prop:eprocess-rate} gives us a blockwise test supermartingale with asymptotic log-growth exactly equal to the $\KLinf(Q,\Pcal)$, under weak lower semicontinuity at $Q$. A question that follows from this is whether we can build a single blockwise test supermartingale in the composite alternative case. Now, recall that $d(Q):=\KLinf(Q,\Pcal)$. The question becomes can we build a single blockwise test-supermartingale\footnote{Note, that this depends on $(\Pcal, \Qcal,\varepsilon)$ only.} that achieves a pointwise near optimal growth rate $d(Q)-\varepsilon$ for every $Q\in\Qcal$.

One potential idea to do this is if we simply mix over some $\varepsilon$ net of $\Qcal$. However, the subtlety here is that $d(Q)$ can vary significantly across $\Qcal$. Furthermore, note that in general it is only lower semicontinuous rather than continuous. So, we must partition the alternative class by rate levels and cover each rate level separately. To this end, we first construct a neighborhood-based typical-set $e$-process at some prescribed rate.

\begin{lemma}\label{lem:typicalset-eprocess-fixedU}
Let $\Pcal\subseteq\mathcal M_1(\X)$ be nonempty. Then let $U\subseteq\mathcal M_1(\X)$ be weakly open. Assume in addition that $\cl(U)$ is convex. Then, set
\[
b(U):=\inf_{R\in\cl(U)}\Phi(R)=\inf_{R\in\cl(U)}\inf_{P\in\Pcal}\KL(R\|P)\in[0,\infty].
\]

\noindent Now assume that $b(U)>0$ and take some $r\in(0, b(U))$. Then it follows that there exist deterministic block lengths $(m_k)_{k\ge 1}$ with $t_k:=\sum_{j=1}^k m_j\to\infty$ and a test supermartingale $(E_{t_k})_{k\ge 0}$ for $\Pcal$ such that for every $Q\in U$ we have $Q^\infty$-almost-surely that
\[
\lim_{k\to\infty}\frac{1}{t_k}\log E_{t_k} = r.
\]
In addition,
\[
\lim_{k\to\infty}\frac{1}{t_k}\E_{Q^\infty}[\log E_{t_k}] = r.
\]
\end{lemma}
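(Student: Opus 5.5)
The plan is to reuse the typical-set construction from the proof of Proposition~\ref{prop:eprocess-rate}, with one change: the neighborhood $C_k$ is replaced by the single fixed set $C:=\cl(U)$, and the varying rate $d-2\varepsilon_k$ by the fixed rate $r$. Concretely, I take block lengths $m_k:=2^k$, so that $t_k=\sum_{j\le k}m_j$ grows geometrically and $m_k\ge k$. I set $\gamma_k:=2^{-k}$ and define the block factor
\[
E^{(k)}(x^{m_k}):=(1-\gamma_k)\exp(m_k r)\,\mathbf 1\{\widehat Q_{m_k}(x^{m_k})\in C\}+\gamma_k .
\]
The process is $E_{t_k}:=\prod_{j\le k}E^{(j)}(\text{block } j)$. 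The crucial feature is that nothing in this construction depends on any particular $Q\in U$.

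\emph{Validity.} $C$ is weakly closed and, by hypothesis, convex. Lemma~\ref{lem:convex-empirical} therefore gives, for every $P\in\Pcal$,
\[
P^{m_k}(\widehat Q_{m_k}\in C)\le \exp\Bigl(-m_k\inf_{R\in C}\KL(R\|P)\Bigr)\le \exp(-m_k b(U))\le \exp(-m_k r).
\]
If $b(U)=\infty$ the probability is $0$, which is even better. Hence $\E_{P^{m_k}}[E^{(k)}]\le 1$. Since blocks are i.i.d.\ under $P^\infty$, the same conditional-expectation computation as in Proposition~\ref{prop:achieve-infkl-rate} shows that $(E_{t_k})$ is a blockwise test supermartingale on $\bbF_{\bt}$.

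\emph{Growth under a fixed $Q\in U$.} This is the main obstacle. The block lengths are chosen once and for all, so I cannot tune them to make $Q^{m_k}(A_k)\ge 1-2^{-k}$ as was done for a single $Q$. Instead I will use the Sanov upper bound for the closed set $F:=\mathcal M_1(\X)\setminus U$, which gives
\[
\limsup_{m\to\infty}\frac1m\log Q^m(\widehat Q_m\in F)\le -\inf_{R\in F}\KL(R\|Q)=:-c_Q .
\]
I claim $c_Q>0$. The map $R\mapsto\KL(R\|Q)$ has weakly compact sublevel sets on Polish $\X$ and vanishes only at $Q$, and $Q\notin F$ because $U$ is open. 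If $c_Q$ were $0$, a minimizing sequence in $F$ would lie in a compact sublevel set, so a subsequence would converge to some $R\in F$ (as $F$ is closed); lower semicontinuity would then force $\KL(R\|Q)=0$, i.e.\ $R=Q\in F$, a contradiction.

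Consequently $Q^{m_k}(A_k^c)\le e^{-m_k c_Q/2}$ for all large $k$, which is summable. By Borel--Cantelli, $Q^\infty$-a.s.\ all but finitely many blocks land in $C$. For those blocks,
\[
\log E^{(k)}\in\bigl[m_k r+\log(1-\gamma_k),\,m_k r\bigr].
\]
Summing and dividing by $t_k$, the finitely many bad blocks contribute $O(1)/t_k$, and $\sum_k|\log(1-\gamma_k)|<\infty$. This yields $\frac1{t_k}\log E_{t_k}\to r$ almost surely.

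\emph{Expectations.} Pointwise $E^{(k)}\le e^{m_k r}$, since the factor is a convex combination of $e^{m_kr}$ (or $0$) and $1\le e^{m_k r}$, so $\E\log E_{t_k}\le r t_k$. For the lower bound,
\[
\E_{Q^{m_k}}[\log E^{(k)}]\ge Q^{m_k}(A_k)\bigl(m_k r+\log(1-\gamma_k)\bigr)+Q^{m_k}(A_k^c)\log\gamma_k .
\]
The total loss relative to $m_k r$ is at most $Q^{m_k}(A_k^c)\,m_k r+|\log(1-\gamma_k)|+Q^{m_k}(A_k^c)\,k\log 2$. Because $Q^{m_k}(A_k^c)$ decays exponentially in $m_k=2^k$, this loss is summable in $k$, so the accumulated deficit is $O(1)$. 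Dividing by $t_k$ gives the expected-log limit $r$.
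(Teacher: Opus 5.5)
Your proposal is correct and follows essentially the same route as the paper: a $Q$-independent typical-set block factor $(1-\gamma_k)e^{m_k r}\mathbf 1_{A_k}+\gamma_k$ made valid by Lemma~\ref{lem:convex-empirical} applied to $\cl(U)$, with Sanov's upper bound on the closed set $U^c$ plus Borel--Cantelli giving the almost-sure and expected-log rate $r$ under every $Q\in U$. The differences are cosmetic. You take $m_k=2^k$ instead of $k^2$. You put the indicator on $\cl(U)$ rather than $U$. You get $c_Q>0$ from compact sublevel sets of $\KL(\cdot\|Q)$ rather than from Pinsker. You drop the unneeded slack $\eta$ and cutoff $k_0$. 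Your summable-deficit accounting for the expected log is more explicit than the paper's.
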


For this lemma, we will need Sanov's asymptotic theorem and to understand how we define Q-typical sets. We will build that up as follows. Suppose that $X_1, X_2,\dots$ are coordinate maps on $\X^\mathbb{N}$. Now, for $n\in\mathbb{N}$, we know that the empirical measure of a sample $x^n=(x_1,\dots,x_n)\in X^n$ is simply,
\[
\widehat{Q}_n(x^n) :=\frac{1}{n}\sum_{i=1}^n \delta_{x_i} \in\mathcal{M}_1(\X).
\]

\begin{proof}[Proof of Lemma~\ref{lem:typicalset-eprocess-fixedU}]
Consider some summable sequence $(\gamma_k)_{k\ge 1}\subset(0,1)$. An apt choice could be for example $\gamma_k=2^{-k}$. In addition, take some particular deterministic block lengths with $m_k\to\infty$, for example, $m_k=k^2$. Now, let $C:=\cl(U)$. This is, convex and weakly closed by assumption. In addition, let $b(U)=\inf_{R\in C}\Phi(R)$. Now for each $k$, define $A_k:=\{\widehat Q_{m_k}\in U\}$ and $\beta_k:=\sup_{P\in\Pcal}P^{m_k}(A_k)$. Because of the fact that $A_k\subseteq\{\widehat Q_{m_k}\in C\}$, Lemma~\ref{lem:convex-empirical} gives us that for every $P\in\Pcal$ that
\[
P^{m_k}(A_k)\le P^{m_k}(\widehat Q_{m_k}\in C) \le \exp\Bigl(-m_k\inf_{R\in C}\KL(R\|P)\Bigr).
\]
Therefore it follows that $\beta_k\le \exp\Bigl(-m_k\inf_{R\in C}\Phi(R)\Bigr)=\exp\bigl(-m_k b(U)\bigr)$.
Clearly, since $r<b(U)$ by assumption, there exists $\eta>0$ such that $r+2\eta<b(U)$. Therefore there must exist $k_0$ such that for all $k\ge k_0$, $\beta_k\le \exp(-m_k(r+\eta))$. Now let us define the $k$th block $e$-variable as
\[
E^{(k)}:=
\begin{cases}
1, & k<k_0,\\[0.4em]
(1-\gamma_k)\exp(m_k r)\one_{A_k}+\gamma_k, & k\ge k_0.
\end{cases}
\]
Now, for any $P\in\Pcal$ and $k\ge k_0$ we have that
\begin{align*}
\E_{P^{m_k}}[E^{(k)}]
&=(1-\gamma_k)\exp(m_k r)P^{m_k}(A_k)+\gamma_k\\
&\le (1-\gamma_k)\exp(m_k r)\beta_k+\gamma_k\\
&\le (1-\gamma_k)\exp(-m_k\eta)+\gamma_k\\
&\le (1-\gamma_k)\cdot 1+\gamma_k=1.
\end{align*}
We know that $\E_{P^{m_k}}[E^{(k)}]=1$ for $k<k_0$. Therefore the blockwise product $E_{t_0}:=1$ and
\[
E_{t_k}:=\prod_{j=1}^k E^{(j)}\bigl(X_{t_{j-1}+1},\dots,X_{t_j}\bigr),
\]
is in fact a test supermartingale $\Pcal$, exactly as in Proposition~\ref{prop:eprocess-rate}. Now take some $Q\in U$. Because $U$ is weakly open and contains $Q$, the complement $U^c$ is weakly closed and does not contain $Q$. In other words, if there existed a sequence $(R_n)\subset U^c$ with $\KL(R_n\|Q)\downarrow 0$, by Pinsker's inequality, $R_n\Rightarrow Q$ is forced to occur. But that contradicts the fact that $U^c$ is weakly closed and excludes $Q$. So it must be the case that $c_Q(U):=\inf_{R\in\cl(U^c)}\KL(R\|Q)$ satisfies $c_Q(U)>0$. Now let us use Sanov's theorem under $Q$. Doing this gives us
\[
\limsup_{m\to\infty}\frac{1}{m}\log Q^{m}(\widehat Q_m\in U^c) \le -c_Q(U).
\]
In particular, $Q^{m_k}(A_k^c)\le \exp(-m_k c_Q(U)/2)$ for all large $k$. Hence it follows that $\sum_k Q^{m_k}(A_k^c)<\infty$. Then thanks to Borel Cantelli Lemma 1, $A_k$ occurs eventually $Q^\infty$ almost surely. Let us work on the event that $A_k$ holds for all sufficiently large $k$. On this event we have that for $k\ge k_0$,
\[
\log E^{(k)}\in\Bigl[m_kr+\log(1-\gamma_k),\; m_kr\Bigr].
\]
Therefore, summing and dividing by $t_k=\sum_{j\le k}m_j$, we have that
\[
r+\frac{1}{t_k}\sum_{j=1}^k\log(1-\gamma_j)\le \frac{1}{t_k}\log E_{t_k} \le r+\frac{C(\omega)}{t_k}.
\]
Here, recall that $C(\omega)$ is just some finite random constant for finitely many exceptional blocks. However, since $\sum_j|\log(1-\gamma_j)|<\infty$ and $t_k\to\infty$, both of the correction terms must vanish. Therefore we get that almost surely under $Q^\infty$,
\[
\frac{1}{t_k}\log E_{t_k} \to r,
\]
which was our first claim. In addition, the expected log growth claim follows exactly as in Proposition~\ref{prop:eprocess-rate}. Meaning, from linearity of expectation, $\E[\log E_{t_k}]=\sum_{j\le k}\E[\log E^{(j)}]$. Then, the Borel-Cantelli lemma gives us a lower bound $\E[\log E^{(k)}]\ge (1-o(1))m_kr+O(1)$. Moreover, $\log E^{(k)}\le m_k r$ for all $k$, hence $\log E_{t_k}\le rt_k$. As such, $\E_{Q^\infty}[\log E_{t_k}]\le rt_k$. And this in conjunction with the lower bound tells us that indeed $\E_{Q^\infty}[\log E_{t_k}]/t_k\to r$.
\end{proof}

\noindent Now, we are ready for our main theorem for weakly compact $\Qcal$. Our theorem will prove the existence of a pointwise near-optimal composite blockwise test supermartingale through rate level covers.

For intuition, if we take $\Qcal=\{Q\}$, thanks to Theorem~\ref{thm:cover-and-mix-eprocess} below, we get for every $\varepsilon>0$ a test supermartingale with
\[
\liminf_{k\to\infty}\frac{1}{t_k}\log E_{t_k} \ge d(Q)-\varepsilon,
\]
almost surely under $Q^{\infty}$. And the analogous expected log bound will also hold. So it will recover the lower bound direction of Proposition~\ref{prop:eprocess-rate} in the singleton case. However, Proposition~\ref{prop:eprocess-rate} will be much sharper as it gives an exact limit $d(Q)$ rather than a near-optimal lower bound.

\begin{theorem}\label{thm:cover-and-mix-eprocess}
Assume that $\X$ is Polish and that $\Pcal\subseteq\mathcal M_1(\X)$ is weakly compact. Also, let $\Qcal\subseteq\mathcal M_1(\X)$ be weakly compact. Assume that $\Qcal\subseteq\Pcal^c$. As before, let us define $d(Q):=\KLinf(Q,\Pcal)=\Phi(Q)\in(0,\infty]$ for $Q\in\Qcal$. Now take some $\varepsilon>0$. Then it follows that there exist deterministic block lengths $(m_k)$ with $t_k:=\sum_{j\le k}m_j\to\infty$ and a \underline{test-supermartingale} $(E_{t_k})_{k\ge 0}$ such that for every $Q\in\Qcal$ with $d(Q)<\infty$,
\[
\boxed{
\liminf_{k\to\infty}\frac{1}{t_k}\log E_{t_k} \ge d(Q)-\varepsilon \quad Q^\infty\text{-a.s.}
}
\]
In addition,
\[
\boxed{
\liminf_{k\to\infty}\frac{1}{t_k}\E_{Q^\infty}[\log E_{t_k}] \ge d(Q)-\varepsilon.
}
\]
\end{theorem}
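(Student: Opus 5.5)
The plan is to drop the compactness of $\Qcal$ and instead use the fact that $\mathcal M_1(\X)$ with the weak topology is separable and metrizable (via $d_{\mathrm{BL}}$), hence Lindel\"of. I would build countably many fixed-rate test supermartingales from Lemma~\ref{lem:typicalset-eprocess-fixedU}, run them all on a \emph{common} block schedule, and take a countable mixture of them. Weak compactness of $\Pcal$ enters only to make $\Phi$ weakly lower semicontinuous everywhere, via \parencite[Lemma~3]{ram2026powersequentialtestsexist}. The hypotheses $\Qcal\subseteq\Pcal^c$ and weak compactness of $\Qcal$ are then not really needed beyond ensuring the statement is meaningful.

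\emph{Step 1: rate levels and neighborhoods.} Fix $\varepsilon>0$. For $j\ge 0$, let $\Qcal_j:=\{Q\in\Qcal: d(Q)\in[j\varepsilon/2,(j+1)\varepsilon/2)\}$, and let $O_j:=\{R:\Phi(R)>j\varepsilon/2-\varepsilon/4\}$. By lower semicontinuity, $O_j$ is weakly open, and it contains every $Q\in\Qcal_j$. For each such $Q$, I would pick $\delta_Q>0$ with $\overline B^{\mathrm{BL}}_{2\delta_Q}(Q)\subseteq O_j$ and set $U_Q:=B^{\mathrm{BL}}_{\delta_Q}(Q)$. Then $U_Q$ is weakly open and $\cl(U_Q)\subseteq\overline B^{\mathrm{BL}}_{\delta_Q}(Q)\subseteq O_j$. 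Moreover $\cl(U_Q)$ is convex, being the closure of a convex set for a metric induced by a norm on signed measures; this is the same observation used in the proof of Theorem~\ref{thm:main}. Hence $b(U_Q)\ge j\varepsilon/2-\varepsilon/4$.

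\emph{Step 2: countable subcovers and the fixed-rate processes.} For each $j$, Lindel\"ofness of the subspace $\Qcal_j$ gives countably many $U_{j,1},U_{j,2},\dots$ of the above form covering $\Qcal_j$. For $j\ge1$ I set $r_j:=j\varepsilon/2-\varepsilon/2$. This satisfies $0\le r_j<b(U_{j,i})$ and $r_j\ge d(Q)-\varepsilon$ for every $Q\in\Qcal_j$. When $r_j=0$ (that is, $j=1$) and for level $j=0$, the target $d(Q)-\varepsilon$ is negative, so the constant process $1$ suffices and I include it in the mixture.

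For $j\ge 2$ I apply Lemma~\ref{lem:typicalset-eprocess-fixedU} to each $U_{j,i}$ with rate $r_j$. The key point is that the proof of that lemma works with \emph{any} prescribed $m_k\to\infty$; only the threshold $k_0$ depends on $U$. So I fix once and for all $m_k=k^2$ and $t_k=\sum_{j\le k}m_j$, and obtain test supermartingales $E^{(j,i)}$ all on the same filtration $\bbF_{\bt}$. This gives $\tfrac1{t_k}\log E^{(j,i)}_{t_k}\to r_j$ a.s., together with the matching expected-log limit, under every $Q\in U_{j,i}$.

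\emph{Step 3: mixing.} Enumerate the countable family $\{1\}\cup\{E^{(j,i)}\}$ as $F^{(1)},F^{(2)},\dots$ and choose weights $w_\ell>0$ with $\sum_\ell w_\ell=1$. Set $E_{t_k}:=\sum_\ell w_\ell F^{(\ell)}_{t_k}$. Because all components share the filtration $\bbF_{\bt}$, monotone convergence for conditional expectations shows that $E$ is a test supermartingale for every $P\in\Pcal$, with $E_{t_0}=1$.

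Now take $Q\in\Qcal$ with $d(Q)<\infty$. Then $Q\in\Qcal_j$ for some $j$, and $Q\in U_{j,i}$ for some $i$ (or the constant component applies). Pointwise $\log E_{t_k}\ge\log w_\ell+\log F^{(\ell)}_{t_k}$ for the relevant index $\ell$. Dividing by $t_k$, the constant $\log w_\ell$ vanishes, and the almost-sure and expected-log conclusions of Lemma~\ref{lem:typicalset-eprocess-fixedU} give both boxed bounds with $r_j\ge d(Q)-\varepsilon$.

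\emph{Main obstacle.} I expect the delicate part to be the interaction between rate levels and covering. A single neighborhood of $Q$ only certifies the rate $\approx d(Q)$, while nearby $Q'$ can have much larger $d(Q')$, and lower semicontinuity gives no upper control. This is why the cover is taken separately on each level set $\Qcal_j$, and why a countable, rather than finite, mixture is needed, since the $\Qcal_j$ need not be compact. The other point requiring care is the common block schedule: one has to check in the proof of Lemma~\ref{lem:typicalset-eprocess-fixedU} that Sanov's bound and Borel--Cantelli work for $m_k=k^2$ uniformly in the choice of $U$, with only $k_0$ depending on $U$.
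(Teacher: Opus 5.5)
Your proof is correct. It reaches the result by a somewhat different, and more general, route than the paper.

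The paper works with superlevel sets $K_m=\{Q\in\Qcal: d(Q)\ge m\varepsilon/2\}$. These are closed in $\Qcal$ by lower semicontinuity of $\Phi$, hence weakly compact. For each $m$ it takes a \emph{finite} subcover of $K_m$ by bounded-Lipschitz balls and applies Lemma~\ref{lem:typicalset-eprocess-fixedU} at rate $m\varepsilon/2-\varepsilon/2$ on the common schedule $m_k=k^2$. It then mixes with weights $2^{-m}/J_m$.

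You instead use the bands $\Qcal_j$, which need not be closed, together with Lindel\"ofness of the separable metrizable space $\mathcal M_1(\X)$, to get \emph{countable} subcovers. The paper's mixture is already countable across levels, and every conclusion is pointwise in $Q$: the penalty $\log w_\ell/t_k$ vanishes for each fixed $Q$ however small $w_\ell$ is. So finiteness within a level buys nothing here. Your argument therefore shows that weak compactness of $\Qcal$ and the hypothesis $\Qcal\subseteq\Pcal^c$ are superfluous for this theorem. Weak compactness of $\Pcal$ is used only to make $\Phi$ lower semicontinuous. Compactness of $\Qcal$ would matter only for conclusions uniform in $Q$, which this statement does not assert.

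Your explicit constant component also settles the case $d(Q)<\varepsilon$ cleanly. There the paper's choice of $m\ge 1$ with $r_m\le d(Q)$ does not exist when $d(Q)<\varepsilon/2$. The bound does hold in that case, since each component has asymptotic log-rate at least $0$, but the paper does not address it.

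Two small corrections:
\begin{enumerate}
\item Lemma~\ref{lem:typicalset-eprocess-fixedU} does not work for an arbitrary $m_k\to\infty$. Its Borel--Cantelli step needs $\sum_k e^{-c\,m_k}<\infty$ for every $c>0$. This is moot, since you fix $m_k=k^2$.
\item The uniformity you worry about in your last paragraph is not needed. On the null side, $\sup_{P\in\Pcal}P^{m_k}(A_k)\le e^{-m_k b(U)}$ holds for every $k$, so $k_0=1$ already works. On the alternative side, the Sanov and Borel--Cantelli thresholds may depend on $(Q,U)$ through $c_Q(U)$ without harm, because each conclusion is required only for a fixed pair. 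The only thing that must be shared across components is the block schedule.
\end{enumerate}
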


\begin{proof}
Take a particular $\varepsilon>0$. For $m\ge 1$, let us define the rate grid as
\[
r_m:=\frac{m\varepsilon}{2},\qquad s_m:=r_m-\frac{\varepsilon}{2}=\frac{(m-1)\varepsilon}{2}.
\]
Then, for each $m\ge 1$ let us define the superlevel set as $K_m:=\{Q\in\Qcal:d(Q)\ge r_m\}$. Now, since $\Pcal$ is weakly compact by assumption, by \parencite[Lemma~3]{ram2026powersequentialtestsexist} it follows that $\Phi$ is weakly lower semicontinuous everywhere. Therefore, $d=\Phi$ is weakly lower semicontinuous on $\mathcal M_1(\X)$. So $K_m$ must be weakly closed in $\Qcal$, and since $\Qcal$ is weakly compact that must mean that each of the $K_m$ are weakly compact. Now take some particular but arbitrarily chosen $m\ge 1$. For each $Q\in K_m$ we have that $d(Q)\ge r_m$. Hence that in conjunction with weak lower semicontinuity of $\Phi$ at $Q$ tells us that the set $O_{m,Q}:=\{R\in\mathcal M_1(\X):\Phi(R)>r_m-\varepsilon/4\}$ is weakly open and contains $Q$. Recall that $\X$ was Polish, and so it follows that the weak topology is metrizable on $\mathcal M_1(\X)$. Now consider the bounded Lipschitz metric $d_{\mathrm{BL}}$. Recall that $O_{m,Q}$ is weakly open and contains $Q$. Therefore, we can choose $\delta_{m,Q}>0$ such that
\[
\overline B^{\mathrm{BL}}_{\delta_{m,Q}}(Q)\subseteq O_{m,Q}.
\]

Now, let us set $U_{m,Q}:=B^{\mathrm{BL}}_{\delta_{m,Q}}(Q)$. It follows that $\cl(U_{m,Q})=\overline B^{\mathrm{BL}}_{\delta_{m,Q}}(Q)$ is convex and weakly closed. In addition,
\[
\inf_{R\in\cl(U_{m,Q})}\Phi(R)\ge r_m-\varepsilon/4.
\]
The family $\{U_{m, Q}:Q\in K_m\}$ is an open cover of the compact set $K_m$, so it must have a finite subcover. Let us denote this subcover as
\[
K_m\subseteq \bigcup_{j=1}^{J_m}U_{m,j},
\]
where for $j=1,\dots,J_m$, we have that $\inf_{R\in\cl(U_{m,j})}\Phi(R)\ge r_m-\varepsilon/4$. Because $s_m=r_m-\varepsilon/2<r_m-\varepsilon/4\le \inf_{R\in\cl(U_{m,j})}\Phi(R)$, we can use Lemma~\ref{lem:typicalset-eprocess-fixedU} with $U=U_{m,j}$ and $r=s_m$. To this end, we get for each pair $(m, j)$ a test-supermartingale $(E^{(m,j)}_k)_{k\ge 0}$ for $\Pcal$ (and note that, they are all constructed on the same block schedule; for example by taking the same $m_k=k^2$ in the lemma). And such that this blockwise test-supermartingale satisfies for every $Q\in U_{m,j}$,
\[
\lim_{k\to\infty}\frac{1}{t_k}\log E^{(m,j)}_{t_k} = s_m,
\]
almost surely under $Q^\infty$, and likewise for the expected log-growth necessarily. Now let us choose weights $(w_{m, j})$ such that $\sum_{m\ge 1}\sum_{j=1}^{J_m}w_{m,j}\le 1$. Just to be concrete let us set $w_{m,j}:=\frac{2^{-m}}{J_m}$. Now then let us define the mixed process where $k\ge 0$ as
\[
E_{t_k}:=\sum_{m\ge 1}\sum_{j=1}^{J_m} w_{m,j}E^{(m,j)}_{t_k}.
\]
Each of the $E^{(m,j)}$ is a blockwise $e$-process for $\Pcal$. And, conditional expectation is linear, which means that $(E_{t_k})$ is also a blockwise $e$-process for $\Pcal$. Meaning, $\E_{P^\infty}[E_{t_k}\mid\mathcal F_{t_{k-1}}]\le E_{k-1}$ for all $P\in\Pcal$. Now consider some particular $Q\in\Qcal$ with $d(Q)<\infty$. Let us choose $m$ so that $r_m\le d(Q)<r_{m+1}=r_m+\varepsilon/2$. Clearly then by that choice, it follows that $Q\in K_m$, which means $Q\in U_{m,j}$ for some $j\in\{1,\dots,J_m\}$. For this particular $(m,j)$ we have that pointwise $E_{t_k}\ge w_{m,j}E^{(m,j)}_{t_k}$, and so it follows that $\log E_{t_k}\ge \log w_{m,j}+\log E^{(m,j)}_{t_k}$. All we need to do now is divide by $t_k$ and take the $\liminf$ and we get that almost surely under $Q^\infty$,
\[
\liminf_{k\to\infty}\frac{1}{t_k}\log E_{t_k} \ge \lim_{k\to\infty}\frac{1}{t_k}\log E^{(m,j)}_{t_k} = s_m,
\]
as $\log(w_{m,j})/t_k\to0$. In addition, note that our choice of $m$ above implies that $s_m=r_m-\varepsilon/2 \ge d(Q)-\varepsilon$. This proves the almost sure lower bound. The expected log bound follows with the same inequality applied and taking expectations. That is,
\[
\E_{Q^\infty}[\log E_{t_k}] \ge \log w_{m,j}+\E_{Q^\infty}[\log E^{(m,j)}_{t_k}].
\]
Then, dividing by $t_k$ and taking the $\liminf$ and using the fact that $\E[\log E^{(m,j)}_k]/t_k\to s_m$ we get that
\[
\liminf_{k\to\infty}\frac{1}{t_k}\E_{Q^\infty}[\log E_{t_k}]\ge s_m\ge d(Q)-\varepsilon.
\]
This completes the proof.
\end{proof}

We will now give a uniform version of this theorem. 

\begin{corollary}\label{cor:uniform-cover-and-mix}
Under the same setup as Theorem~\ref{thm:cover-and-mix-eprocess}, define,
\[
d_{\min}:=\inf_{Q\in\Qcal} d(Q)=\inf_{Q\in\Qcal}\KLinf(Q,\Pcal)\in(0,\infty].
\]
Then it follows that for every $\varepsilon>0$, wp 1 under $Q^\infty$ for every $Q\in\Qcal$ with $d(Q)<\infty$, the same test supermartingale will satisfy
\[
\boxed{
\liminf_{k\to\infty}\frac{1}{t_k}\log E_{t_k} \ge d_{\min}-\varepsilon.
}
\]
And it also satisfies
\[
\boxed{
\liminf_{k\to\infty}\frac{1}{t_k}\inf_{Q\in\Qcal}\E_{Q^\infty}[\log E_{t_k}]\ge d_{\min}-\varepsilon.
}
\]
\end{corollary}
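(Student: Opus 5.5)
The almost sure bound comes directly from Theorem~\ref{thm:cover-and-mix-eprocess}, applied with the same $\varepsilon$. That theorem gives a test supermartingale $(E_{t_k})$ on a deterministic schedule. For every $Q\in\Qcal$ with $d(Q)<\infty$ it yields $\liminf_k t_k^{-1}\log E_{t_k}\ge d(Q)-\varepsilon$ $Q^\infty$-a.s. Since $d(Q)\ge d_{\min}$ for every $Q\in\Qcal$, the first boxed inequality follows with no further work. We use the same process, so nothing new needs to be constructed.

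The uniform expected-log bound needs more care, because the order $\liminf_k\inf_Q$ is not $\inf_Q\liminf_k$. I would not pass through the pointwise statement. Instead I would return to the proof of Theorem~\ref{thm:cover-and-mix-eprocess} and use its structure to obtain bounds that are uniform in $Q$. Fix $m^\star$ with $r_{m^\star}\le d_{\min}<r_{m^\star+1}$; if $d_{\min}=\infty$, replace $d_{\min}$ by an arbitrary finite level $L$ throughout. Every $Q\in\Qcal$ then lies in $K_{m^\star}$, which is all of $\Qcal$ up to the grid. Hence $\Qcal\subseteq\bigcup_{j\le J_{m^\star}}U_{m^\star,j}$, a \emph{finite} cover.

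Pointwise we have $\log E_{t_k}\ge \log w_{m^\star,j}+\log E^{(m^\star,j)}_{t_k}$ for every $j$. So it suffices to show, for each fixed $j$, that
\[
\liminf_{k\to\infty}\frac1{t_k}\inf_{Q\in U_{m^\star,j}}\E_{Q^\infty}\bigl[\log E^{(m^\star,j)}_{t_k}\bigr]\ge s_{m^\star}\ge d_{\min}-\varepsilon.
\]
The minimum over finitely many $j$ then finishes the argument, since the constant $\log w_{m^\star,j}/t_k$ vanishes.

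The main obstacle is this uniform-in-$Q$ version of Lemma~\ref{lem:typicalset-eprocess-fixedU}. That lemma gives the expected-log lower bound through $\E\log E^{(k)}\ge Q^{m_k}(A_k)(m_kr+\log(1-\gamma_k))+Q^{m_k}(A_k^c)\log\gamma_k$. Uniformity therefore requires $\sup_{Q}Q^{m_k}(\widehat Q_{m_k}\notin U)\to0$ over the relevant $Q$, and this fails near $\partial U$. I would fix this by shrinking. Each $Q\in\Qcal$ lies in some ball $B^{\mathrm{BL}}_{\delta_j/2}(Q_j)$, and by compactness $\Qcal$ is covered by the smaller concentric balls $U'_j:=B^{\mathrm{BL}}_{\delta_j/2}(Q_j)$, with the e-process still built on the larger $U_j$. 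I cannot take the shrinking from the paper's subcover as stated. I would instead redo the subcover extraction in Theorem~\ref{thm:cover-and-mix-eprocess} with half-radius balls, which is permitted because the construction there is ours to choose.

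For $Q\in U'_j$, the event $\{\widehat Q_m\notin U_j\}$ implies $d_{\mathrm{BL}}(\widehat Q_m,Q)\ge\delta_j/2$. A uniform concentration bound for the BL distance over the weakly compact, hence tight, set $\Qcal$ would make $\sup_{Q\in\Qcal}Q^m(d_{\mathrm{BL}}(\widehat Q_m,Q)\ge\delta/2)\to0$. I would prove this via uniform Glivenko--Cantelli on a compact $K_\eta$ with $\sup_Q Q(K_\eta^c)\le\eta$, together with a finite net of the BL unit ball restricted to $K_\eta$ (Arzel\`a--Ascoli) and Hoeffding's inequality. This yields $\inf_Q Q^{m_k}(A_k)\to1$. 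Because $\log\gamma_k=-k\log2$ grows only linearly in $k$ while $t_k\sim k^3/3$, the bad-event term is $o(t_k)$ uniformly in $Q$; this holds even without summability, since $Q^{m_k}(A_k^c)\le1$ and $\sum_{j\le k}|\log\gamma_j|=O(k^2)=o(t_k)$. Hence $t_k^{-1}\inf_Q\E_Q\log E^{(j)}_{t_k}\ge r(1-o(1))-o(1)$, which gives the claim.
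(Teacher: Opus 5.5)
Your almost-sure argument is the same as the paper's. For the expected-log bound your route is genuinely different, and more careful. The paper takes $\inf_{Q\in\Qcal}$ in the pointwise bound $\liminf_k t_k^{-1}\E_{Q^\infty}[\log E_{t_k}]\ge d(Q)-\varepsilon$ from Theorem~\ref{thm:cover-and-mix-eprocess}. That only gives $\inf_Q\liminf_k(\cdot)\ge d_{\min}-\varepsilon$, whereas the boxed claim is about $\liminf_k\inf_Q(\cdot)$, which can be smaller. The pointwise argument in Lemma~\ref{lem:typicalset-eprocess-fixedU} cannot supply uniformity, because its Sanov exponent $c_Q(U)$ degenerates as $Q$ approaches $\partial U$.

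Your argument provides exactly the missing uniformity. At the single level $m^\star$ you have $K_{m^\star}=\Qcal$, so finitely many mixture components suffice. A BL concentration bound uniform over the tight family $\Qcal$ (compact $K_\eta$, an Arzel\`a--Ascoli net, Hoeffding and a union bound) gives $\inf_Q Q^{m_k}(A_k)\to1$ on the shrunken pieces. With $\gamma_k=2^{-k}$ and $m_k=k^2$, the bad-event cost $\sum_{i\le k}|\log\gamma_i|=O(k^2)=o(t_k)$. Finally, $\liminf_k\min_j=\min_j\liminf_k$ over finitely many $j$. The paper's one-liner really only proves the weaker order of limits; yours costs an extra uniform-concentration lemma and the specific $(\gamma_k,m_k)$, but proves the statement as written.

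Two small points. First, you do not need to redo the subcover. The map $Q\mapsto\max_j d_{\mathrm{BL}}(Q,U_{m^\star,j}^c)$ is continuous and positive on the compact set $\Qcal$, so it is bounded below by some $\lambda>0$ (a Lebesgue number). Hence each $Q\in\Qcal$ has a $j$ with $B^{\mathrm{BL}}_\lambda(Q)\subseteq U_{m^\star,j}$, and the conclusion holds for literally the paper's process, as the corollary's phrase ``the same test supermartingale'' suggests. Your displayed reduction should then take the infimum over the $Q\in\Qcal$ assigned to $j$, not over all of $U_{m^\star,j}$. Second, if $d_{\min}<\varepsilon/2$, no $m^\star\ge1$ exists. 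The claim is then trivial: every block factor is at least $\gamma_k$, so $t_k^{-1}\log E_{t_k}$ is bounded below by a deterministic $o(1)$ term.
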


\begin{proof}
For each $Q\in\Qcal$, thanks to Theorem~\ref{thm:cover-and-mix-eprocess} we have that with probability one under $Q^\infty$,
\[
\liminf_{k\to\infty}\frac{1}{t_k}\log E_{t_k} \ge d(Q)-\varepsilon.
\]
Now, for every $Q\in\Qcal$, $d(Q)\ge d_{\mathrm{min}}$. So our almost-sure claim is immediate. The expected-log claim follows the same way: just from taking the infimum over $Q\in\Qcal$ in the lower bound from Theorem~\ref{thm:cover-and-mix-eprocess}.
\end{proof}

Throughout this paper, we have used blockwise constructions for achievability (through blockwise test supermartingales on a reduced filtration). For impossibility, it holds for arbitrary $e$-processes. For sequential testing, the processes on $\bbF_{\bt}$ are enough already, since if we threshold at times $t_k$, we have a stopping rule in the original filtration. Thus, albeit the evidence process still updates on a schedule, the power-one implications we prove are still meaningful. We will formalize this more as follows.

\begin{remark}\label{remark:formalize-blockwise}
Our blockwise constructions we use throughout this paper are structurally important. They allow us to convert a finite horizon $e$-variable into a sequentially valid process. Note that by duality at horizon $m$, we have an $m$-sample $e$-variable $E^{\star}:\X^m\to[0,\infty]$, $E^{\star}\in(\Pcal^m)^{\circ}$. It has a large expected log under the target law $Q^m$. However, note that this is not an $e$-process on $\bbF$, rather it is terminal at time $m$. However, our blockwise construction solves this problem. We partition the data into disjoint lengths of length $m$. We define $t_k:=km$ and set
\[
E_{t_k}:=\prod_{j=1}^k E^\star\bigl(X_{(j-1)m+1},\dots,X_{jm}\bigr).
\]
Then we have for every $P\in\Pcal$ that
\[
\E_{P^\infty}[E_{t_k}\mid \mathcal F_{t_{k-1}}] = E_{t_{k-1}}\E_{P^m}[E^\star] \le E_{t_{k-1}}.
\]
This holds since the future block is independent of the past and has law $P^m$, and since $E^{\star}\in (\Pcal^m)^{\circ}$. So, $(E_{t_k})_{k\ge 0}$ is a test supermartingale on the reduced filtration $\mathcal F_{t_k}$. Under $Q^{\infty}$, the block log-increments are i.i.d.\ and hence both the expected log growth and almost sure growth are immediate.

In addition, one cannot in general hold the process constant between $t_k$ and $t_{k+1}$. As indicated in Example~\ref{ex:counterzero-versus-constant-eproc}, a $\bbF$-stopping rule can exploit the information that is revealed inside the current block, which will violate the $e$-process property. This can be saved by setting the process equal to $0$ off the block times, and it will still be a valid process. However, this will only preserve in full-generality the $\limsup$ (of the expected $\log$ objective) but will not preserve any almost sure or full $\liminf$ statement.

One should therefore look at blockwise processes as the constructive objects for lower-bound and achievability arguments. These will certainly be $e$-processes, but on a reduced filtration. However, this is already sufficient for sequential testing applications since $\tau_{\alpha}:=\inf\{t_k:E_{t_k}\ge 1/\alpha\}$ is a $(\mathcal F_n)$-stopping rule in the original filtration.
\end{remark}

\section{Summary}\label{sec:cute}
In this work, we proved that the intrinsic maximal growth rate for any sequential procedure under absolutely no assumptions is the normalized limit of the $\infKL$, rather than the $\KLinf$, the latter in general being larger. We provided a necessary and sufficient condition for the existence of power one sequential tests (in terms of positivity of the $k$-step bipolar rate) for composite $\Pcal$ and singleton $Q$, greatly generalizing prior work. We provided general conditions under which  the maximal achievable growth rate $\limsup_{n\to\infty}\frac1n\infKL(Q^n,\Pcal^n)$ actually equals $\KLinf$. We also proved the maximal intrinsic growth rate in the composite alternative setting (under no assumptions whatsoever), which turns out to be a limiting variant of the $\mathrm{GROW}$ value, and provide some general conditions under which this reduces to a $\KL$ type expression. In addition, we also provided general conditions for the existence of power one sequential tests for composite alternatives and related that to strict positivity of the intrinsic maximal growth rate in the composite case. 


\subsection*{Acknowledgments}
Aaditya Ramdas was supported by NSF DMS grant 2310718.

\printbibliography

\appendix

\section{Omitted Proofs}\label{sec:proofs-apendix}
\begin{proof}[Proof of Lemma~\ref{lem:eprocess-upper-by-infkl}]
Consider a particular $k\ge 1$ and abbreviate $n:=t_k$. By definition, $W_{t_k}$ is $\mathcal F_{t_k}=\sigma(X_1,\dots,X_n)$-measurable. By the Doob-Dynkin lemma, we thus have a measurable map $w_k:\X^{n}\to[0,\infty]$ such that $W_{t_k}=w_k(X_1,\dots,X_n)$. Because $\tau\equiv t_k$ is a $(\bbF_{\bt})$-stopping time, Definition~\ref{def:eprocess} implies for every $P\in\Pcal$ that $E_{P^\infty}[W_{t_k}]\le 1$. Under $P^\infty=P^{\otimes\mathbb N}$, the vector $(X_1,\dots,X_n)$ has law $P^n$, so for all $P\in\Pcal$, $\E_{P^\infty}[W_{t_k}]=\E_{P^n}[w_k]\le 1$. Therefore, $w_k\in(\Pcal^n)^\circ$. By definition~\ref{def:bipolar} for every $R\in(\Pcal^n)^{\circ\circ}$ we therefore have that
\begin{equation}\label{eq:ERwk}
\E_R[w_k]\le 1.    
\end{equation}

Now take some arbitrary $R\in(\Pcal^n)^{\circ\circ}$. If $\KL(Q^n\|R)=+\infty$, then $\E_{Q^n}[\log w_k]\le \KL(Q^n\|R)$. So, we may assume for the rest of this proof without loss of generality that $\KL(Q^n\|R)<\infty$. As such, $Q^n\ll R$, so the density $f:=\frac{dQ^n}{dR}$ exists. By Definition~\ref{def:kl} we have that
\begin{equation}\label{eq:KL-as-logf}
\KL(Q^n\|R)=\int_{\X^n}\log(f)dQ^n.   
\end{equation}

In addition, $f>0$ holds $Q^n$-almost everywhere. Consequently, $\log w_k=\log(\frac{w_k}{f})+\log f$ also holds $Q^n$ almost everywhere. (And certainly $\log 0=-\infty$ is allowed.) Integrating with respect to $Q^n$ and using \eqref{eq:KL-as-logf} gives
\begin{align}
\E_{Q^n}[\log w_k]&=\int_{\X^n}\log\Bigl(\frac{w_k}{f}\Bigr)dQ^n + \KL(Q^n\|R).
\label{eq:split}
\end{align}

Note that $\log x\le x-1$ for all $x\ge 0$ (noting that $\log 0=-\infty$), where here we use $x=\frac{w_k}{f}$. In addition, we also recall that $dQ^n=fdR$. Therefore, we obtain
\begin{align}
\int_{\X^n}\log\Bigl(\frac{w_k}{f}\Bigr)dQ^n &\le \int_{\X^n}\Bigl(\frac{w_k}{f}-1\Bigr)dQ^n \nonumber\\
&= \int_{\X^n} w_k\,dR-1 \nonumber\\ 
&\le 0  
\label{eq:log-bound}.
\end{align}

Note that the last inequality here used \eqref{eq:ERwk}. Thus, taking together \eqref{eq:split} and \eqref{eq:log-bound} we have $\E_{Q^n}[\log w_k]\le \KL(Q^n\|R)$. This holds for every $R\in(\Pcal^n)^{\circ\circ}$. Therefore,
\[
\E_{Q^n}[\log w_k] \le \inf_{R\in(\Pcal^n)^{\circ\circ}}\KL(Q^n\|R) = a_n = a_{t_k}.
\]
By definition, $W_{t_k}=w_k(X_1,\dots,X_n)$ depends only on the first $n$ samples. So, $\E_{Q^\infty}[\log W_{t_k}]=\E_{Q^n}[\log w_k]$, completing the proof.
\end{proof}

\begin{proof}[Proof of Theorem~\ref{thm:powerone-characterization}]
For each $n\ge 1$, let us recall that
\[
a_n(Q):=\inf_{R\in(\Pcal^n)^{\circ\circ}}\KL(Q^n\|R)\in[0,\infty].
\]
Now, the implication of \textcolor{red}{(1)} implying \textcolor{red}{(2)} comes immediately thanks to Proposition~\ref{prop:powerone-implies-out}. Similarly, \textcolor{red}{(2)} implies \textcolor{red}{(3)} comes from Proposition~\ref{prop:notinbipolar-positive}. And, \textcolor{red}{(3)} implying \textcolor{red}{(2)} is also immediate. That is, if $Q^{k}\in (\Pcal^k)^{\circ\circ}$, it follows that $Q^{k}$ is feasible in the infimum that defines $a_k(Q)$, and so $a_k(Q)\le \KL(Q^k\|Q^k)=0$, proving the claim. So it remains for us to show that \textcolor{red}{(2)} implies \textcolor{red}{(1)}. Assume that $Q^k\notin (\Pcal^k)^{\circ\circ}$ for some $k$. By definition of the bipolar, there exists some $E\in(\Pcal^k)^{\circ}$ such thar $\E_{Q^k}[E]>1$. Let us replace $E$ by $E\wedge M$ for large enough $M$, and through this we can assume that $E$ is bounded and still will satisfy $\E_{Q^k}[E]>1$. Now, for some $\lambda\in(0,1)$ let us define $Z_{\lambda}:=1-\lambda+\lambda E$. Then, it follows that $Z_{\lambda}\in (\Pcal^k)^{\circ}$, since for every $P\in\Pcal$,
\[
\E_{P^k}[Z_\lambda] = 1-\lambda+\lambda\E_{P^k}[E]\le 1.
\]
In addition, $0<1-\lambda\le Z_{\lambda}\le 1-\lambda+\lambda M$, hence $\log Z_{\lambda}$ is bounded. Now define $\psi(\lambda):=\E_{Q^k}[\log Z_{\lambda}]$. Now, $E$ is bounded, $\psi$ is differentiable at $0$, and $\psi'(0)=\E_{Q^k}[E]-1>0$. Therefore, for sufficiently small enough $\lambda\in (0,1)$, we have that $m:=\E_{Q^k}[\log Z_{\lambda}]>0$. Let us now partition the samples into successive disjoint blocks of length $k$. We will let $S_0:=1$. And for $j\ge 1$,
\[
S_j:=\prod_{i=1}^jZ_\lambda(X_{(i-1)k+1},\dots,X_{ik}).
\]
Now, for every $P\in\Pcal$, the process $(S_j)_{j\ge 0}$ is a nonnegative supermartingale in block time. Because, each block factor will have $P^{k}$ expectation at most $1$. Therefore by Ville's inequality for every $\alpha\in(0,1)$ it follows that
\[
P^\infty\left(\sup_{j\ge 0} S_j\ge \frac{1}{\alpha}\right)\le \alpha.
\]
Now, define the stopping rule $\tau_{\alpha}:=\inf\{jk: S_j\ge 1/\alpha\}$. Therefore, $\tau_{\alpha}$ is a $(\mathcal F_n)$-stopping time and $\sup_{P\in\Pcal}P^{\infty}(\tau_\alpha<\infty)\le\alpha$. Now, under $Q^\infty$, the random variables
\[
Y_j:=\log Z_\lambda(X_{(j-1)k+1},\dots,X_{jk})
\]
are iid and bounded with mean $m>0$. So by the strong law we have that $Q^{\infty}$ almost surely,
\[
\frac{1}{j}\log S_j = \frac{1}{j}\sum_{i=1}^j Y_i \longrightarrow m>0.
\]
Therefore, since with probability 1 under $Q^{\infty}$, $S_j\to\infty$, it follows that $Q^{\infty}(\tau_{\alpha}<\infty)=1$. This proves the implication \textcolor{red}{(2)} implies \textcolor{red}{(1)}. To show the implication with \textcolor{red}{(4)}, assume that for some $k$, $a_k(Q)>0$. Now, from superadditivity, for $m\ge 1$, it is clear that $a_{mk}(Q)\ge ma_k(Q)$. Therefore, $d_{\star}(Q,\Pcal)\ge a_k(Q)/k >0$, proving that \textcolor{red}{(3)} implies \textcolor{red}{(4)}. Now to show necessity, assume that \textcolor{red}{(4)} holds, meaning suppose that $d_{\star}(Q,\Pcal)>0$. This means that $a_k(Q)>0$ for some $k$, so \textcolor{red}{(4)} implying \textcolor{red}{(3)} is obvious. Lastly, our final remark in the lemma follows thanks to Corollary~\ref{cor:out-propogates} combined with applying Proposition~\ref{prop:notinbipolar-positive}.
\end{proof}

\begin{proof}[Proof of Lemma~\ref{lem:an-superadditive-sec}]
Let us start by taking some particular $Q\in\mathcal M_1(\X)$ and $n,m\in\mathbb N$. We will first deal with the case where both $a_n(Q)<\infty$ and $a_m(Q)<\infty$. Let us let $\varepsilon>0$. Thanks to Proposition~\ref{prop:infkl-e-power} it follows that there exist $E_n\in(\Pcal^n)^{\circ}$ and $F_m\in(\Pcal^m)^{\circ}$ such that $\E_{Q^n}[\log E_n]\ge a_n(Q)-\varepsilon$ and $\E_{Q^m}[\log F_m]\ge a_m(Q)-\varepsilon$. Now, having explicated this, let us define $G: \X^{n+m}\to[0,\infty]$ by
\[
G(x_1,\dots,x_{n+m}):=E_n(x_1,\dots,x_n)F_m(x_{n+1},\dots,x_{n+m}).
\]
Our claim here will be that $G$ is in the polar of $\Pcal^{n+m}$. To see that note that for each $P\in\Pcal$, the product structure of $P^{n+m}=P^{n}\otimes P^{m}$ tells us that
\[
\E_{P^{n+m}}[G]=\E_{P^{n+m}}\left[E_n(x_1,\dots,x_n)F_m(x_{n+1},\dots,x_{n+m})\right]=\E_{P^n}[E_n]\E_{P^m}[F_m]\le 1\cdot 1\le 1.
\]
This follows from the independence of the first $n$ coordinates and the last $m$ coordinates under $P^{n+m}$. And, the inequality follows because $E_n\in(\Pcal^n)^{\circ}$ and $F_m\in(\Pcal^m)^{\circ}$. So we get that $G\in(\Pcal^{n+m})^{\circ}$, which was our claim. Hence once again thanks to Proposition~\ref{prop:infkl-e-power} we get
\begin{align}
a_{n+m}(Q)&=\sup_{E\in(\Pcal^{n+m})^{\circ}}\E_{Q^{n+m}}[\log E]\notag\\
&\ge \E_{Q^{n+m}}[\log G]\notag\\
&=\E_{Q^{n+m}}\Big[\log E_n(X_1,\dots,X_n)+\log F_m(X_{n+1},\dots,X_{n+m})\Big]\notag\\
&=\E_{Q^n}[\log E_n]+\E_{Q^m}[\log F_m]\notag\\
&\ge a_n(Q)+a_m(Q)-2\varepsilon.\label{eq:superadd}
\end{align}
Here, in \eqref{eq:superadd}, we once again use independence under $Q^{n+m}=Q^n\otimes Q^m$ along with the fact that the former part only will depend on the first $n$ coordinates and the latter on the last $m$. And so since $\varepsilon>0$ was arbitrary, we may indeed conclude superadditivity whenever both $a_n(Q)$ and $a_m(Q)$ are finite. That is,
\[
a_{n+m}(Q)\ge a_n(Q)+a_m(Q).
\]
Suppose one of $a_m(Q)$ or $a_n(Q)$ were $+\infty$. In this case, our same argument here applies, but using lower targets that are arbitrarily large instead of $a_m(Q)-\varepsilon$ or $a_n(Q)-\varepsilon$. Therefore, it follows that $(a_n(Q))_{n\ge 1}$ is superadditive in any case. So the existence of the limit is immediate now from Fekete's lemma. Meaning,
\[
\lim_{n\to\infty} \frac{a_n(Q)}{n}=\sup_{n\ge 1}\frac{a_n(Q)}{n}.
\]
This completes the proof.
\end{proof}

\begin{proof}[Proof of Lemma~\ref{lem:bn-le-an}]
Take some $n\in\mathbb N$ and take any $E\in(\Pcal^n)^\circ$. For every $Q\in\Qcal$,
\[
\inf_{Q'\in\Qcal}\E_{Q'^n}[\log E]\le \E_{Q^n}[\log E] \le \sup_{E'\in(\Pcal^n)^\circ}\E_{Q^n}[\log E'].
\]
Then, by Proposition~\ref{prop:infkl-e-power}, i.e.\ the finite horizon strong duality, we know that the rightmost term in our above expression is simply $a_n(Q)=\infKL(Q^n,\Pcal^n)$. If we then take the $\inf_{Q\in\Qcal}$ over the inequality we therefore get that
\[
\inf_{Q'\in\Qcal}\E_{Q'^n}[\log E]\le \inf_{Q\in\Qcal} a_n(Q) = a_n(\Qcal).
\]
Finally, if we take the $\sup_{E\in(\Pcal^n)^\circ}$ over the lhs we get that $b_n(\Qcal,\Pcal)\le a_n(\Qcal)$. Now all we need to do is divide by $n$ and take the $\limsup_{n\to\infty}$. This will certainly preserve the inequality. And as such, we will get that $\overline d_{\mathrm{rob}}\le \overline d_{\mathrm{wc}}$. 
\end{proof}

\begin{proof}[Proof of Corollary~\ref{cor:bipolar-one-step-positive}]
First, note that $\KLinf(Q,\Pcal^{\circ\circ})=\inf_{R\in\Pcal^{\circ\circ}}\KL(Q\|R)=a_1(Q)$. Let us set $c:=\inf_{Q\in\Qcal} a_1(Q)=\inf_{Q\in\Qcal}\KLinf(Q,\Pcal^{\circ\circ})$. By assumption, it follows that $c>0$. Now let us take some particular $Q\in\Qcal$. Thanks to Lemma~\ref{lem:an-superadditive-sec}, it follows that the sequence $(a_n(Q))_{n\ge 1}$ is superadditive. So we get that for every $n\ge 1$,
\begin{align}
a_n(Q)&\ge a_{n-1}(Q)+a_1(Q)\notag\\
&\ge a_{n-2}(Q)+2a_1(Q)\notag\\
&\,\,\vdots\notag\\
&\ge na_1(Q)\notag\\
&\ge nc.
\label{eq:an-lower-by-al}
\end{align}
Then taking the infimum over all $Q\in\Qcal$ gives us that $a_n(\Qcal)=\inf_{Q\in\Qcal}a_n(Q)\ge nc$. Therefore,
\[
\Gamma(\Qcal,\Pcal)=\overline d_{\mathrm{wc}}=\limsup_{n\to\infty}\frac1n a_n(\Qcal)\ge \limsup_{n\to\infty}\frac1n (nc)=c>0,
\]
proving our claim.
\end{proof}

\begin{proof}[Proof of Corollary~\ref{cor:gamma-positive-finiteQ}]
Since $\Qcal$ is finite, it follows that by Proposition~\ref{prop:finite-Q-gap} and Theorem~\ref{thm:composite-maxrate} that
\[
\Gamma(\Qcal,\Pcal)=\overline d_{\mathrm{wc}}=\limsup_{n\to\infty}\frac1n a_n(\Qcal).
\]
Note that by definition, $a_n(\Qcal)=\min_{1\le i\le M}a_n(Q_i)$. Thus it follows that
\begin{equation}\label{eq:finiteQ=gamma-min}
\Gamma(\Qcal,\Pcal)=\limsup_{n\to\infty}\frac1n \min_{1\le i\le M} a_n(Q_i)=\limsup_{n\to\infty}\min_{1\le i\le M} \frac{a_n(Q_i)}{n}.   
\end{equation}
Now, for each $i\in\{1,\dots,M\}$, by Lemma~\ref{lem:an-superadditive-sec} we have that $a_n(Q_i)/n \to d_{\star}(Q_i,\Pcal)$. Note that the minimum of finitely many convergence real sequences will converge to the minimum of the limits necessarily. So by \eqref{eq:finiteQ=gamma-min} we have that
\begin{equation}\label{eq:infKL-equiv}
\Gamma(\Qcal,\Pcal)=\min_{1\le i\le M} d_{\star}(Q_i,\Pcal)=\inf_{Q\in\Qcal}d_{\star}(Q,\Pcal).    
\end{equation}

So by definition of $d_{\star}(Q,\Pcal)$ we get that
\[
\Gamma(\Qcal,\Pcal)=\inf_{Q\in\Qcal}\lim_{n\to\infty}\frac1n \KLinf(Q^n,(\Pcal^n)^{\circ\circ}),
\]
and thus the equivalence follows, completing our proof.
\end{proof}

\begin{proof}[Proof of Corollary~\ref{cor:gamma-positive-finiteQ-wlsc}]
Our rate identity shows that pointwise weak lsc implies that for each $Q\in\Qcal$, $d_{\star}(Q,\Pcal)=\KLinf(Q,\Pcal)$. So together with Corollary~\ref{cor:gamma-positive-finiteQ} it follows that,
\[
\Gamma(\Qcal,\Pcal)=\inf_{Q\in\Qcal}d_{\star}(Q,\Pcal)=\inf_{Q\in\Qcal}\KLinf(Q,\Pcal).
\]
So, it follows that the strict positivity equivalence holds, completing our proof.
\end{proof}

\begin{proof}[Proof of Theorem~\ref{thm:uniform-power-one-compact}]
Thanks to Corollary~\ref{cor:uniform-cover-and-mix}, it follows that for every $\varepsilon\in (0,d_{\mathrm{min}})$, there exist deterministic times $t_k\uparrow \infty$ and a single test supermartingale $(E_{t_k})$ such that for every $Q\in\Qcal$ wp 1 under $Q^{\infty}$,
\[
\liminf_{k\to\infty}\frac{1}{t_k}\log E_{t_k}\ge d(Q)-\varepsilon.
\]
Recall that $d(Q)=\KLinf(Q,\Pcal)$. Now since $d(Q)\ge d_{\mathrm{min}}$ for every $Q\in\Qcal$, we immediately have that $Q^{\infty}$ almost surely for every $Q\in\Qcal$,
\[
\liminf_{k\to\infty}\frac{1}{t_k}\log E_{t_k} \ge d_{\min}-\varepsilon.
\]
The expected-log bound follows from the second conclusion of Corollary~\ref{cor:uniform-cover-and-mix}. Now take some $\alpha\in(0,1)$ and $\tau_\alpha$ as we defined. Since $d_{\mathrm{min}}-\varepsilon>0$, thanks to our almost sure lower bound we have that $E_{t_k}\to\infty$ wp 1 under $Q^{\infty}$ for every $Q\in\Qcal$. Therefore it follows that for every $Q\in\Qcal$, $Q^{\infty}(\tau_\alpha<\infty)=1$. All we need to now is show the level-$\alpha$ correctness under the null. $(E_{t_k})$ being a test supermartingale entails it is also an $e$-process. So for every $P\in\Pcal$, $\E_{P^\infty}[E_{\tau_\alpha}]\le 1$. Now, on the event $\{\tau_\alpha < \infty\}$, we have that $E_{\tau_\alpha}\ge 1/\alpha$. So it follows that
\[
1\ge \E_{P^\infty}[E_{\tau_\alpha}]
\ge \frac{1}{\alpha}P^\infty(\tau_\alpha<\infty).
\]
Therefore, $P^{\infty}(\tau_\alpha<\infty)\le \alpha$ for every $P\in\Pcal$, completing the proof.
\end{proof}

\begin{proof}[Proof of Corollary~\ref{cor:uniform-power-one-finite}]
Since $\Qcal$ is finite, thanks to both Proposition~\ref{prop:finite-Q-gap} and Theorem~\ref{thm:composite-maxrate} we have that,
\[
\Gamma(\Qcal,\Pcal)=\limsup_{n\to\infty}\frac1n a_n(\Qcal),
\]
where $a_n(\Qcal)=\min_{1\le i\le M} a_n(Q_i)$ here. Now, $\Pcal$ is weakly compact. Thus from Theorem~\ref{thm:main} applied to each $Q_i$ we have that
\[
\frac{1}{n}a_n(Q_i)\longrightarrow \KLinf(Q_i,\Pcal).
\]
Then if we take the minimum over these finitely many indices we have that
\[
\Gamma(\Qcal,\Pcal)=\min_{1\le i\le M}\KLinf(Q_i,\Pcal) =\inf_{Q\in\Qcal}\KLinf(Q,\Pcal).
\]
Hence, the equivalence is shown. The existence of the uniformly power-one test follows immediately, thanks to Theorem~\ref{thm:uniform-power-one-compact}.
\end{proof}

\begin{proof}[Proof of Theorem~\ref{thm:two-sufficient-powerone}]
The second claim of the theorem is the result of \parencite[Theorem~1]{ram2026powersequentialtestsexist}. So it remains for us to prove the first claim. Assume that $\Gamma(\Qcal,\Pcal)>0$. Thanks to Theorem~\ref{thm:composite-maxrate}, we have
\[
\Gamma(\Qcal,\Pcal)=\overline d_{\mathrm{rob}}=\limsup_{n\to\infty}\frac1n\left(\sup_{E\in(\Pcal^n)^\circ}\inf_{Q\in\Qcal}\E_{Q^n}[\log E]\right).
\]
Therefore, there exist some $m\in\mathbb N$, where for some $c>0$ and an $m$-sample $e$-variable $E^{\star}\in(\Pcal^m)^{\circ}$ such that
\[
\inf_{Q\in\Qcal}\E_{Q^m}[\log E^{\star}]\ge mc>0.
\]
Now set $t_k:=km$. Take $t_0:=0$. Define the blockwise process $S$ where $S_{t_0}:=1$ and for $k\ge 1$,
\[
S_{t_k}:=\prod_{j=1}^kE^\star(X_{(j-1)m+1},\ldots,X_{jm}).
\]
For each $P\in\Pcal$, the blocks are i.i.d.\ with law $P^m$. And since $E^{\star}\in(\Pcal^m)^{\circ}$, $\E_{P^m}[E^{\star}]\le1$. Therefore,
\[
\E_{P^{\infty}}[S_{t_k}\mid \mathcal F_{t_{k-1}}]= S_{t_{k-1}}\E_{P^m}[E^\star]\le S_{t_{k-1}},
\]
proving that $(S_{t_k})_{k\ge 0}$ is a blockwise test supermartingale for $\Pcal$. Let us now consider some $Q\in\Qcal$. Under $Q^{\infty}$, the block log-increments
\(
Y_j:=\log E^\star(X_{(j-1)m+1},\ldots,X_{jm})
\)
are i.i.d.\ and satisfy $\E_{Q^{\infty}}[Y_1]=\E_{Q^m}[\log E^{\star}]\ge mc>0$. By the extended strong law of large numbers (i.e.\ allowing the mean to be $+\infty$) it holds that $Q^{\infty}$ almost surely
\[
\frac1k\sum_{j=1}^kY_j \longrightarrow \E_{Q^m}[\log E^\star] \in(0,\infty].
\]
Therefore, $\frac{\log S_{t_k}}{t_k}=\frac{1}{km}\sum_{j=1}^k Y_j$ will have strictly positive $\liminf$ with probability 1 under $Q^{\infty}$. In particular for every $Q\in\Qcal$, $S_{t_k}\to\infty$ almost surely. Now define
\[
\tau_{\alpha}:=\inf\{t_k:k\ge 0,\, S_{t_k}\ge 1/\alpha\}.
\]
Then, it holds that $\tau_\alpha$ is a $\bbF$-stopping rule, and since $S_{t_k}$ diverges with probability one, this implies that for every $Q\in\Qcal$. $Q^{\infty}(\tau_{\alpha}<\infty)=1$. Now $\alpha$-correctness is immediate from Ville's inequality applied to the blockwise test supermartingale. That is, for every $P\in\Pcal$, we have that $P^{\infty}(\tau_{\alpha}<\infty)\le\alpha$. Equivalently, we can apply optional stopping to $\tau_{\alpha}\wedge t_N$, giving us
\[
1\ge \E_{P^{\infty}}[S_{\tau_{\alpha}\wedge t_N}]\ge \frac{1}{\alpha}P^{\infty}(\tau_{\alpha}\le t_N).
\]
Then letting $N\to\infty$ gives us $P^{\infty}(\tau_{\alpha}<\infty)\le \alpha$. So it follows that $\tau_{\alpha}$ is a level-$\alpha$ pointwise power-one test, competing the proof.
\end{proof}

\end{document}